	\newtheorem{lemma}{Lemma}
	\newtheorem{theorem}{Theorem}
	\newtheorem{corollary}{Corollary}
        \newtheorem{proposition}{Proposition}
        \newtheorem{remark}{Remark}
    \DeclareSymbolFont{tipa}{T3}{cmr}{m}{n}
    \DeclareMathAccent{\tipa}{\mathalpha}{tipa}{16}
    \definecolor{pyblue}{RGB}{31,119,180}
    \definecolor{pyred}{RGB}{199,30,45}
    \definecolor{pyorange}{RGB}{252,164,4}
    \pgfplotsset{compat=1.18}
	\let\summary\abstract
	\patchcmd{\summary}{\abstractname}{\summaryname}{}{}
	\patchcmd{\summary}{\abstractname}{\summaryname}{}{}
	\newcommand{\summaryname}{Summary}
\newif\ifnewauthor
\newcounter{affils}
\def\chapterauthor[#1]#2{\ifnewauthor\else, \fi
       #2\textsuperscript{\hyperref[authaffil\the\value{section}.#1]{#1}}%
       \ifnewauthor\newauthorfalse\gdef\chapterauthors{#2}\else
               \g@addto@macro\chapterauthors{, #2}\fi
       \ignorespaces %
}
\def\chapteraffil[#1]{\item[#1 --]%
    \refstepcounter{affils}%
    \label{authaffil\the\value{section}.#1}%
}
	\newcommand{\preced}[1]{\Shortunderstack{{$\,\leftharpoonup\,$} {$#1$}}}
	\newcommand{\Neginternal}[3]{\mathpalette\Neg@{{#1}{#2}{#3}}}
	\newcommand{\ssnegslash}[1]{\rotatebox[origin=c]{60}{$\m@th#1{\dabar@}\mkern-7mu{\dabar@}$}}
	\newcommand{\ssNeg}[2][0mu]{\Neginternal{#1}{\ssnegslash}{#2}}
	\newcommand*\Neg@[2]{\Neg@@{#1}#2}
	\newcommand*\Neg@@[4]{%
	  \mathrel{\ooalign{%
	    $\m@th#1#4$\cr
	    \hidewidth$\m@th#3{#1}\mkern\muexpr#2*2$\hidewidth\cr
	  }}%
	}
	\newcommand{\npreced}[1]{\Shortunderstack{{$\,\ssNeg[-1mu]{\leftharpoonup}\,$} {$#1$}}}
    \newcommand\shorteq{\mathrel{\mathpalette\shorteq@{.4}}}
    \newcommand{\shorteq@}[2]{%
      \resizebox{#2\width}{\height}{$\m@th#1=$}%
      }\makeatother
\NewDocumentEnvironment{FeasRegion}{sO{center}}%
    {\IfBooleanTF{#1}
        {\IfEqCase{#2}{%
            {center}{\begin{equationarray*}{c@{\hskip2em}l}}
            {align}{\begin{equationarray*}{r@{\hskip0.5em}c@{\hskip0.5em}l@{\hskip2em}l}}
        }}
        {\IfEqCase{#2}{%
            {center}{\begin{equationarray}{c@{\hskip2em}l}}
            {align}{\begin{equationarray}{r@{\hskip0.5em}c@{\hskip0.5em}l@{\hskip2em}l}}
    }}}
    {\IfBooleanTF{#1}
        {\end{equationarray*}\ignorespacesafterend}
        {\end{equationarray}\ignorespacesafterend}
    }
\NewDocumentEnvironment{LPArray}{sO{center}mm} %
    {\IfBooleanTF{#1}
        {\IfEqCase{#2}{
            {center}{\begin{equationarray*}{rc@{\hskip2em}l}
                \text{#3} & \multicolumn{2}{l}{#4} \\
         	\text{s.t.}}
            {align}{\begin{equationarray*}{rr@{\hskip0.5em}c@{\hskip0.5em}l@{\hskip2em}l}
                \text{#3} & \multicolumn{4}{l}{#4} \\
         	\text{s.t.}}
        }}
        {\IfEqCase{#2}{
            {center}{\begin{equationarray}{rc@{\hskip2em}l}
                \text{#3} & \multicolumn{2}{l}{#4} \\
         	\text{s.t.}}
            {align}{\begin{equationarray}{rr@{\hskip0.5em}c@{\hskip0.5em}l@{\hskip2em}l}
                \text{#3} & \multicolumn{4}{l}{#4} \\
         	\text{s.t.}}
    }}}
    {\IfBooleanTF{#1}
        {\end{equationarray*}\ignorespacesafterend}
        {\end{equationarray}\ignorespacesafterend}
    }
\newcounter{Para}
\newcounter{NamedPara}
\NewDocumentCommand\ParaTitle{mg}%
    {\hfill\break\indent\textbf{#1}
        \IfValueTF{#2}
            {\textbf{(#2)}\refstepcounter{NamedPara}\def\@currentlabel{#2}}
            {\refstepcounter{Para}}
    }
\newcounter{NamedEq}
\NewDocumentEnvironment{NamedSubs}{mO{\alph}}
    {\begin{subequations}\refstepcounter{NamedEq}\def\@currentlabel{#1}}
    {\end{subequations}\ignorespacesafterend\unskip\addtocounter{equation}{-1}}
\newcommand{\tvect}[2]{%
  \ensuremath{\big[\negthinspace\begin{smallmatrix}#1\\#2\end{smallmatrix}\big]}}
\newcommand{\PM}{\textup{P}}%
\newcommand{\LB}{\textup{LB}} %
\newcommand{\UB}{\textup{UB}} %
\newcommand{\BM}{\textup{M}} %
\def\hquad{\hskip0.5em\relax}
\def\quad{\hskip1em\relax}
\def\qquad{\hskip2em\relax}
\newcommand{\LS}{\hquad<\hquad}
\newcommand{\GS}{\hquad>\hquad}
\newcommand{\LEQ}{\hquad\leq\hquad}
\newcommand{\GEQ}{\hquad\geq\hquad}
\newcommand{\EQ}{\hquad=\hquad}
\newcommand{\IN}{\hquad\in\hquad}
\newcommand{\disj}{\mathcal{D}}
\newcommand{\numobjs}{N}
\newcommand{\setobjs}{[\![\numobjs]\!]}
\newcommand{\object}{\mathscr{B}}
\newcommand{\pairs}{\mathscr{P}}%
\newcommand{\combs}{\mathscr{F}}
\newcommand{\inst}[2]{\left(\textup{\ref*{#1}}^{#2}\right)}
\newcommand{\boolcomp}{\hspace{-1pt}\shorteq\hspace{.2pt}!\hspace{-2.8pt}\shorteq\hspace{-1pt}}
\newcommand{\bcfSymb}{h}
\NewDocumentCommand\bcf{e{_^}d<>g}{%
        \IfValueT{#3}{#3}%
    \bcfSymb
        \IfValueT{#1}{_{#1}}%
        \IfValueT{#2}{^{#2}}%
    \hspace{-1pt}
    \IfValueT{#4}{\left(#4\right)}%
}
\newcommand{\m}{\sigma}
\renewcommand{\d}{w}
\renewcommand{\r}{W}
\newcommand{\cmark}{\ding{51}} %
\newcommand{\xmark}{\ding{55}} %
\title{Automating Idealness Proofs for Binary Programs with Application to Rectangle Packing\thanks{J. Fravel and R. Hildebrand were partially supported by AFOSR grant FA9550-21-1-0107. Any opinions, findings, and conclusions or recommendations expressed in this material are those of the authors and do not necessarily reflect the views of the Air Force Office of Scientific Research.}
}
\author[1]{Jamie Fravel\thanks{Correspondence to: jfravel@vt.edu}}
\author[1]{Robert Hildebrand}
\affil[1]{Grado Department of Industrial and Systems Engineering, Virginia Tech, Blacksburg, Virginia, USA}
\date{}
\begin{document}
\maketitle
\begin{abstract}
We develop an optimization framework for identifying ideal Mixed Binary Linear Programs (MBLP) which is linear when using known input data and nonconvex quadratic over parametric input data. These techniques are applied to various formulations for rectangle packing, conjectured to be pairwise-ideal. Additionally, we address a variation of the rectangle packing problem which incorporates clearances along selected edges of the packed objects. We present both existing and novel MBLP formulations for the underlying disjunctive program and investigate the poor performance of Gurobi's default branch-and-cut methodology. We operate under a strip-packing objective that aims to minimize the overall height of the packed objects.
\end{abstract}
\textbf{Keywords:} Integer Programming, Rectangle Packing, Layout, Disjunction, Computer-Aided\\ Proof, Ideal Formulation.\\

\noindent \textbf{Data Disclosure:} The data that support the findings of this study are openly available in GitHub at \hyperlink{https://github.com/jfravel/Ideal-O-Matic}{https://github.com/jfravel/Ideal-O-Matic}.  Only synthetic data, generated specifically for this study, was used.

\section{Introduction}
In the field of operations research and optimization, integer programming stands as a pivotal method for solving complex decision-making problems where variables are constrained to be integers. One of the key challenges in integer programming is the development of ideal formulations, which are mathematical models that accurately represent the problem while being computationally efficient. Ideal formulations are crucial as they ensure that the feasible region of the problem is well-defined, leading to optimal solutions that are both precise and reliable. Moreover, such formulations often enhance the performance of optimization algorithms by reducing computational time and improving convergence rates. 
To address these challenges, we develop new computer-aided techniques to prove that mixed binary linear programs are ideal. 
We apply these techniques to various optimization models for rectangle packing problems.

The rectangle packing problem involves arranging a set of rectangles within a specified region such that no two rectangles overlap. This problem is a fundamental issue in various fields such as logistics, manufacturing, and electronics, where efficient space utilization is critical~\cite{Faina2020, LODI2002241}. The primary objective is to maximize the usage of the available area or minimize the required area to fit all the rectangles, leading to applications in cutting stock problems, loading problems, and VLSI design.

In addition to the basic non-overlapping constraint, rectangle packing problems often incorporate additional requirements such as clearances. A \textit{clearance} refers to the designated space around a rectangle where no other rectangle can be placed. These clearances are crucial in practical contexts such as allowing maintenance access to machinery in a factory or preventing interference between components on a printed circuit board. The clearances of two or more objects are traditionally allowed to overlap as long as none of the rectangles overlap the clearance of another, ensuring both safety and functionality in the packed layout.

\subsection{Literature}\label{sec:literature}

A significant body of literature has addressed \textit{row-based} layout problems with clearances, which are distinct from our setting. For instance, Keller et al.~\cite{ConstructionHeuristicsClearances2019Keller} and Uruk and Akbilek~\cite{ZoningMutualClearances2022UrukAkbilek} explore construction heuristics and zoning techniques for mutual clearances, respectively. Wan et al.~\cite{HybridGRASPMultiRow2022WanZuoLiZhao} present a hybrid GRASP for multi-row layouts, while Yu et al.~\cite{TabuSearchSingleRow2014YuZuoMurray} utilize tabu search for single-row problems. Additionally, Zuo et al.~\cite{SharingClearancesLayout2016ZuoMurraySmith} investigate clearance sharing in layout designs.

Guan et al.~\cite{ClearanceBoundsLayout2019GuanZhangLiu} examine a multi-floor layout problem with clearances by developing a Mixed-Integer Programming (MIP) model. Safarzadeh and Koosha~\cite{FuzzyClearnaces2017SafarzadehKoosha} introduce fuzzy clearances and propose a MIP alongside a genetic algorithm.

McKendall and Hakobyan~\cite{ApplicationFixedLayout2021McKendallHakobyan} employ a genetic algorithm to quickly generate good solutions for the Rectangle Packing Problem (RPP), though it shows inconsistency in convergence compared to an MIP-based approach. Ingole and Singh~\cite{FireflyFixedLayout2017IngoleSingh} propose a "Firefly" algorithm, later generalized into a "Biogeography-based" metaheuristic~\cite{BiogeographyFlexibleLayout2021IngoleSingh}, both of which yield promising results.

In some scenarios, objects are assumed to be fixed to a grid or "dotted board," enabling the use of covering models~\cite{RodriguesToledo2017CliqueCovering} or discrete genetic algorithms for irregularly shaped objects~\cite{PairwiseClusteringIrregularStrip2018SatoBauab, BiasedRandomKeyGeneticAlgorithmDotted2020JuniorCosta}.

Nonlinear programs utilizing IPOPT allow for free rotations of convex and non-convex polygons~\cite{IrregularFreeStripPacking2018PeraltaAndrettaOliveira}. These approaches are compared with other methods that also permit free rotations~\cite{NestingIrregularCuttingStock2016LiaoEtAl, CuttingPackingIrregular2016StoyanPankratovRomanova}. They employ a heuristic known as \emph{no-fit raster}~\cite{GeneticRasterNesting2017MundimAndrettaAlves} to generate initial solutions, subsequently refined using IPOPT.

Our work most closely follows the work of Huchette, Dey, and Vielma~\cite{StrongFloorLayout2017HuchetteVielma} on strong integer programming formulations for the floor layout problem.  They present several models to handle packing type constraints and study when some of their formulations are ideal but leave some open questions to be answered.

\subsection{Disjunctive model for Rectangle Packing}
We formally define the motivating problems that we're working with. Let $\setobjs = [1, \dots, N]$.

\ParaTitle{Rectangular Packing Problem}{RPP}\label{prob:RPP} \\
\textbf{Input:} A rectangular region with dimensions $\r_x\times \r_y$ and a collection $\object=\{\object_i\}_{i=1}^N$ of rectangular objects  with dimensions $\d_{ix}\times \d_{iy}$ for each $i\in\setobjs$. 
\\
\textbf{Output:} A placement of the center $[c_{ix},c_{iy}]$ of each object $\object_i\in\object$ such that all of the objects fit in the region and no object overlaps another. Alternatively, an assertion that no such placement exists.\\
~\\
\ref{prob:RPP} is modeled as a disjunctive program below:
    \begin{subequations}\label{model:basic}
    \begin{FeasRegion}
    \tfrac{1}{2}\d_{is}  \LEQ  c_{is}  \LEQ  \r_s - \d_{is}
        &\forall\ i\in\setobjs,\ s\in\{x,y\} \label{basic:otf} \\
    \bigvee_{(k,l,s)\in\combs_{ij}} (c_{ks} + \tfrac{1}{2}\d_{ks}   \LEQ  c_{ls} - \tfrac{1}{2}\d_{ls})
        &\forall\ (i,j)\in \pairs \label{basic:disj}
    \end{FeasRegion}
    \end{subequations}
    
\noindent where $\pairs = \{(i,j)\in\setobjs\times\setobjs:i<j\}$ is the set of all distinct pairs of objects, and $\combs_{ij} = \{(i,j,x),(j,i,x),(i,j,y),(j,i,y)\}$ is the set of combinations of $i$, $j$, and the directions $\{x,y\}$. This is a disjunctive programming problem that can be formulated (or embedded) as a Mixed-Binary Linear Program (MBLP) in a variety of ways which will be discussed in Section \ref{sec:embeddings}. Now we introduce and justify an additional feature of our variant of rectangle packing.

\subsection{Clearances}\label{sec:margins}
We include a \textit{clearance} on each face of each object, given by a vector $\mathbf{\m}_i = [\m_{ix}^-,\m_{iy}^-,\m_{ix}^+,\m_{iy}^+]^\top\in\mathbb{R}^4$ for each $i\in\setobjs$, to represent some free space required required to properly operate or maintain whatever is represented by said object. The clearances should lie entirely within the region and are allowed to overlap so long as no object occludes the clearance of another; see Figure \ref{fig:Occlusion} for an explanation of occlusion and Figure \ref{fig:MarginDimension} for a visualization of the clearance parameters.
    \begin{figure}[!ht]\centering
        \begin{subfigure}{0.4\textwidth}\centering
        \includegraphics[width=0.5\textwidth]{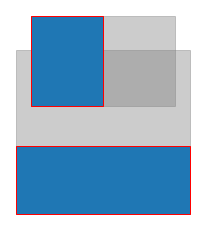}
        \caption{An invalid solution due to object/clearance overlap}
        \label{fig:InvalidOcclusion}
        \end{subfigure}
    ~
        \begin{subfigure}{0.4\textwidth}\centering
        \includegraphics[width=0.5\textwidth]{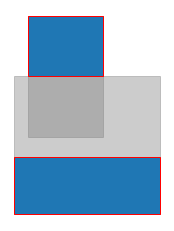}
        \caption{A valid solution}
        \label{fig:ValidOcclusion}
        \end{subfigure}
    \caption{Examples of Clearance Occlusion}
    \label{fig:Occlusion}
    \end{figure}

Clearance is an important feature of many practical layout/packing problems: manufacturing machine layouts may require clearances to reduce vibrations in neighboring machines, adequately ventilate the workspace, provide access to maintenance panels, or simply to allow for manual operation; circuit boards may require clearances to improve heat dissipation or mitigate crosstalk interference. The disjunctive programming model for \ref{prob:RPP} with clearances is given by

    \begin{subequations}\label{model:mbasic}
    \begin{FeasRegion}
    \tfrac{1}{2}\d_{is} + \m_{is}^-  \LEQ  c_{is}  \LEQ  \r_s - \d_{is} - \m_{is}^+
        &\forall\ i\in\setobjs,\ s\in\{x,y\} \label{mbasic:otf} \\
    \bigvee_{(k,l,s)\in\combs_{ij}} (c_{ks} + \tfrac{1}{2}\d_{ks}   \LEQ  c_{ls} - \tfrac{1}{2}\d_{ls} - \max\{\m_{ks}^+,\m_{ls}^-\})  
        &\forall\ (i,j)\in \pairs \label{mbasic:disj}.
    \end{FeasRegion}
    \end{subequations}

\noindent Notice that if $\mathbf{\m}_i = \boldsymbol{0}$ for each $i\in\setobjs$, then \eqref{model:mbasic} reduces exactly to \eqref{model:basic}. In the pursuit of notational simplicity, we define the \textit{Precedence Margin} $\PM_{ijs} = \frac{1}{2}\d_{is} + \frac{1}{2}\d_{js} + \max\{\m_{is}^+,\m_{js}^-\}$ for each $(i,j,s)\in\pairs\times\{x,y\}$; see Figure \ref{fig:Parameters} for a visualization of this parameter. Additionally,  define the lower bound $\LB_{is} = \frac{1}{2}\d_{is} + \m_{is}^-$ and upper bound $\UB_{is} = \r_s - \frac{1}{2}\d_{is} - \m_{is}^+$ for each object $\object_i$ in both directions $s\in\{x,y\}$. We use capital letters for these parameters to emphasize their composite nature. 

    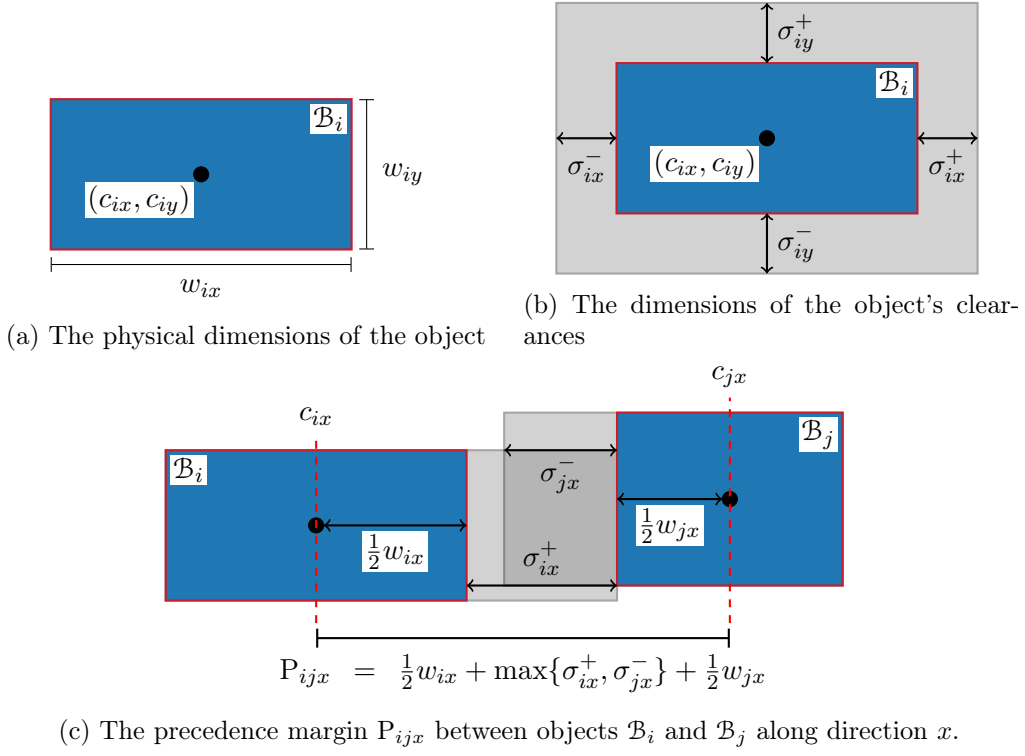
\begin{figure}[!ht]\centering
        \begin{subfigure}{0.4\textwidth}\centering
        \begin{tikzpicture}
\filldraw[fill=pyblue, draw=pyred, thick] (0,0.5) rectangle (4,2.5) node[fill=white, below left=2pt, inner sep=1pt] {$\object_i$};
\draw[fill=black] (2,1.5) circle (0.1) node[fill=white, below left=4pt, inner sep=1pt] {$(c_{ix}, c_{iy})$};
\draw[|-|] (0,0.3) -- (4,0.3) node[midway, below=2pt] {$\d_{ix}$};
\draw[|-|] (4.2,0.5) -- (4.2,2.5) node[midway, right=2pt] {$\d_{iy}$};
\end{tikzpicture}
        \caption{The physical dimensions of the object}
        \label{fig:PhysicalDimension}
        \end{subfigure}
    ~
        \begin{subfigure}{0.4\textwidth}\centering
        \begin{tikzpicture}
\filldraw[fill=gray, draw=black, thick, opacity=.35] (0,0) rectangle (5.6,3.6);
\filldraw[fill=pyblue, draw=pyred, thick] (0.8,0.8) rectangle (4.8,2.8) node[fill=white, below left=2pt, inner sep=1pt] {$\object_i$};
\draw[fill=black] (2.8,1.8) circle (0.1) node[fill=white, below left=4pt, inner sep=1pt] {$(c_{ix}, c_{iy})$};
\draw[<->, thick] (2.8,2.8) -- (2.8,3.6) node[midway, right] {$\m_{iy}^+$};
\draw[<->, thick] (2.8,0) -- (2.8,0.8) node[midway, right] {$\m_{iy}^-$};
\draw[<->, thick] (4.8,1.8) -- (5.6,1.8) node[midway, below] {$\m_{ix}^+$};
\draw[<->, thick] (0,1.8) -- (0.8,1.8) node[midway, below] {$\m_{ix}^-$};
\end{tikzpicture}
        \caption{The dimensions of the object's clearances}
        \label{fig:MarginDimension}
        \end{subfigure}
    \\
        \begin{subfigure}{1\textwidth}\centering
        \begin{tikzpicture}
\filldraw[fill=gray, draw=black, thick, opacity=.35] (0,0.5) rectangle (6,2.5);
\filldraw[fill=pyblue, draw=pyred, thick] (0,2.5) node[fill=white, below right=2pt, inner sep=1pt] {$\object_i$} rectangle (4,0.5) ;
\draw[fill=black] (2,1.5) circle (0.1);
\filldraw[fill=gray, draw=black, thick, opacity=.35] (4.5,0.7) rectangle (9,3);
\filldraw[fill=pyblue, draw=pyred, thick] (6,0.7) rectangle (9,3) node[fill=white, below left=2pt, inner sep=1pt] {$\object_j$};
\draw[fill=black] (7.5,1.85) circle (0.1);
\draw[<->, thick] (6,1.85) -- (7.4,1.85) node[midway, below=2pt, fill=white, inner sep=1pt] {$\tfrac{1}{2}\d_{jx}$};
\draw[<->, thick] (2.1,1.5) -- (4,1.5) node[midway, below=2pt, fill=white, inner sep=1pt] {$\tfrac{1}{2}\d_{ix}$};
\draw[<->, thick] (4,0.7) -- (6,0.7) node[midway, above] {$\m_{ix}^+$};
\draw[<->, thick] (4.5,2.5) -- (6,2.5) node[midway, below=-2pt] {$\m_{jx}^-$};
\draw[dashed, thick, red] (2,0.2) -- (2,2.7) node[above, black] {$c_{ix}$};
\draw[dashed, thick, red] (7.5,0.2) -- (7.5,3.2) node[above, black] {$c_{jx}$};
\draw[|-|, thick] (2,0) -- (7.5,0) node[midway, below=2pt] {$\PM_{ijx} \EQ \tfrac{1}{2}\d_{ix} + \max\{\m_{ix}^+,\m_{jx}^-\}+\tfrac{1}{2}\d_{jx}$};
\end{tikzpicture}
        \caption{The precedence margin $\PM_{ijx}$ between objects $\object_i$ and $\object_j$ along direction $x$.}
        \label{fig:PrecedenceMargin}
        \end{subfigure}
    \caption{Visualization of the parameters}
    \label{fig:Parameters}
    \end{figure}
Given this notation, Model \ref{model:mbasic} collapses to
    \begin{subequations}\label{model:pbasic}
    \begin{FeasRegion}
    c_{is}  \IN  [\LB_{is},\UB_{js}]
        &\forall\ i\in\setobjs,\ s\in\{x,y\} \label{pbasic:otf} \\
    \bigvee_{(k,l,s)\in\combs_{ij}} (c_{ks} + \PM_{kls}  \LEQ  c_{ls})  
        &\forall\ (i,j)\in \pairs \label{pbasic:disj}.
    \end{FeasRegion}
    \end{subequations}

\noindent Conveniently, additional constraints on displacement between pairs of rectangles could be encoded by choosing precedence margins larger than they are defined here.
Our theoretical work applies to this more general problem even while our computational experiments all define precedence margins defined based on clearances.

\subsection{Objective Function}\label{sec:objectives}
Our study of the constraints is directly applicable to the MBLP embeddings of our disjunctive constraints, independent of any specific objective. In our computational experiments, we focus on the strip-packing objective as defined below.

\ParaTitle{The Rectangular Strip-Packing Problem}{SPP}\label{prob:SPP}\\
A version of \ref{prob:RPP} which seeks the minimum value of $\r_y$ under which a packing of the objects $\object$ into the $\r_x\times \r_y$ region is feasible. This is implemented as follows:
    \begin{subequations}
    \begin{LPArray}{Minimize}{h}
    & h  \GEQ  c_{ix} + \tfrac{1}{2}\d_{ix} + \m_{ix}^+
        &\forall\ i\in\setobjs \\
    &    c_{is}  \IN  [\LB_{is},\UB_{js}]
        &\forall\ i\in\setobjs,\ s\in\{x,y\} \label{spp:otf} \\
    &\bigvee_{(k,l,s)\in\combs_{ij}} (c_{ks} + \PM_{kls}  \LEQ  c_{ls})  
        &\forall\ (i,j)\in \pairs \label{spp:disj}.
    \end{LPArray}
    \end{subequations}

\noindent We chose this objective for its simplicity; it requires no additional disjunctive constraints and only one additional continuous variable $h$. This allows us to compare the different MBLP embeddings of \eqref{spp:disj} relatively directly. We maintain a hard upper bound $\r_y$ based on a greedy solution described in Section \ref{sec:CutsTest}.

\subsection{Contributions}
Our theoretical results are largely focused on the idea of \textit{pairwise-idealness} which we define here. A mixed-integer program is called \textit{ideal} if its continuous relaxation has no fractional extreme points. That is, if the program has $m$ continuous variables and $n$ integer variables, then assuming tightness on a linearly independent collection of $m+n$ of the relaxation's constraints necessarily implies integer values for each of the $n$ integer variables. We say that a mixed-integer programming formulation for \ref{prob:RPP} is \textit{pairwise-ideal} if it is ideal when restricted to only two objects.

We consider a number of distinct mixed-binary linear programming (MBLP) formulations (embeddings) for pairwise non-overlapping constraints which arise in \ref{prob:SPP} and a number of other contexts. We certify pairwise-idealness of some formulations and provide counterexamples for the remainder. In pursuit of these results, we develop a novel computational approach to demonstrating that MBLPs are ideal under a range of parameters which provides computer-aided proofs of our results. Specifically, we show the following:
    \begin{itemize}
    \item A computational proof that the commonly used "Standard Unary" formulation is pairwise-ideal. There already exists an analytic proof of this result for the Floor Layout Problem \cite{StrongFloorLayout2017HuchetteVielma}; an adaptation of this proof to \ref{prob:RPP} is given in Appendix \ref{sec:analyticSU}.
    \item A computational proof of pairwise-idealness for the "Refined Unary" formulation given by~\cite{StrongFloorLayout2017HuchetteVielma}; confirming their conjecture.
    \item A counterexample of pairwise-idealness for the binary formulation originally given by Meller, Chen, and Sherali ~\cite{SequencePair2007MellerChenSherali}.
    \item A novel modification of the binary formulation and a computational proof that the modified version is pairwise-ideal.
    \end{itemize}
We perform several numerical experiments on these formulations to determine the importance of implementing a pairwise-ideal formulation in settings with large numbers of objects. We also vary the models by adding known symmetry-breaking inequalities, experimenting with the impact of Gurobi's Cuts and Heuristics parameters, and incorporating branching priorities to combat the poor performance of Gurobi's branch-and-cut algorithm on these problems.

\subsection{ Outline}\label{sec:outline}
Section~\ref{sec:embeddings} introduces the four MIP formulations under consideration, followed by a discussion on data generation and the initial experiment. Section~\ref{sec:ModelVariations} explores various modifications to the formulations, demonstrating their efficacy through a second experiment. Section~\ref{sec:IdealOMatic} details the construction of our computer-aided framework for proving idealness, which is subsequently applied to the formulations in Section~\ref{sec:compproofs}. Section~\ref{sec:Conclusions} summarizes our findings and outlines future research directions. Finally, the appendix includes additional result tables and plots from our computational experiments, an analytical proof of pairwise-idealness for the standard unary formulation, and enhanced versions of some minor results.

\section{MBLP Embeddings of the Disjunctive Program}\label{sec:embeddings}
Following notation from \cite{StrongFloorLayout2017HuchetteVielma}, we say that $\object_i$ precedes $\object_j$ along direction $s$ (denoted $i\preced{s} j$) if $c_{is} + \PM_{ijs} \leq c_{js}$; that is, if $\object_i$ lies far enough ahead of $\object_j$ along direction $s$ so that neither overlaps or occludes the other. Constraint \ref{pbasic:disj} is enforcing the following, four-term disjunction
    $$
    \disj_{ij}^4 \quad=\quad (i \preced{x} j) 
        \hquad\vee\hquad (i \preced{y} j) 
        \hquad\vee\hquad (j \preced{x} i) 
        \hquad\vee\hquad (j \preced{y} i)
    $$
for each pair of objects $(i,j)\in\pairs$. Any mixed-binary linear description of $\disj_{ij}^4$ will be sufficient to write a complete MBLP formulation for \ref{prob:SPP} or any other rectangle packing problem with clearances. This can be done via an extension to a space with many continuous auxiliary variables (see \citet{DisjunctiveProgrammingHull1983Balas}) or via an embedding onto a space of binary variables (see \citet{MILPTechniques2015Vielma}).

\subsection{Unary Embeddings}
The first formulation we present is widely known, but we adapt the name given to it by \cite[Eq. (17)]{StrongFloorLayout2017HuchetteVielma}.  Our problem variation considers the dimensions of the rectangles as parameters rather than variables.  Hence, $\d$ is fixed in our models.

    \ParaTitle{The Standard Unary Formulation}{SU}\label{model:SU}
    \begin{NamedSubs}{SU}\begin{FeasRegion}
    c_{ls}  \GEQ  \LB_{ls} + (\LB_{ks} + \PM_{kls} - \LB_{ls})\delta_{kls}	 
        & \forall\ (k,l,s)\in\combs_{ij} \label{SU:lb}   \\
    c_{ks}  \LEQ  \UB_{ks} + (\UB_{ls} - \PM_{kls} - \UB_{ks})\delta_{kls}	 
        & \forall\ (k,l,s)\in\combs_{ij} \label{SU:ub}   \\
    c_{ks} - c_{ls}  \LEQ  \UB_{ks} - \LB_{ls} + (\LB_{ls} - \PM_{kls} - \UB_{ks})\delta_{kls}   
        & \forall\ (k,l,s)\in\combs_{ij} \label{SU:prec} \\
    \delta_{ijx} + \delta_{jix} + \delta_{ijy} + \delta_{jiy}  \EQ  1 	\label{SU:disj} \\
    \delta_{kls}  \IN  \{0,1\}   
        & \forall\ (k,l,s)\in\combs_{ij} \label{SU:indic}
    \end{FeasRegion}\end{NamedSubs}
where the indicator variable $\delta_{kls}$ takes value $1$ if $k\preced{s}l$ and $0$ otherwise. Recall that $\LB_{ks} = \frac{1}{2}\d_{ks} + \m_{ks}^-$, $\UB_{ks} = \r_s - \frac{1}{2}\d_{ks} - \m_{ks}^+$, and $\PM_{kls} = \frac{1}{2}\d_{ks} + \frac{1}{2}\d_l + \max\{\m_{ks}^+,\m_{ls}^-\}$ are problem parameters.

\begin{restatable}{theorem}{SUideal}\label{thm:SUideal}
\ref{model:SU} is pairwise-ideal if $\PM_{kls} < \UB_{ls} - \LB_{ks}$ for each $(k,l,s)\in\combs_{ij}$.
\end{restatable}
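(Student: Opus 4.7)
The plan is to prove that the LP relaxation of the two-object restriction of \ref{model:SU} equals the convex hull of its mixed-integer feasible set, following the standard approach via Balas' disjunctive hull. The hypothesis $\PM_{kls} < \UB_{ls} - \LB_{ks}$ ensures that each disjunct
$$P_{kls} := \{c \in [\LB,\UB] : c_{ls} - c_{ks} \geq \PM_{kls}\}$$
is non-empty, so every binary assignment with a single $\delta_{kls}=1$ and the other three zero corresponds to a realizable configuration.

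Next, I would write down Balas' extended formulation of $\mathrm{conv}(\bigcup_{kls} P_{kls})$: introduce disaggregated copies $y^{(kls)}$ of the four continuous variables together with weights $\delta_{kls} \in [0,1]$ summing to one, impose $y^{(kls)} \in \delta_{kls} P_{kls}$, and set $c = \sum_{kls} y^{(kls)}$. This extended polytope is ideal by Balas. The main step is then to show that projecting out the $y^{(kls)}$ recovers exactly the SU constraints. For instance, combining the scaled bound $y^{(kls)}_{ls} \geq \delta_{kls}(\LB_{ks} + \PM_{kls})$ (which itself follows from $y^{(kls)}_{ls} \geq y^{(kls)}_{ks} + \delta_{kls}\PM_{kls}$ together with $y^{(kls)}_{ks} \geq \delta_{kls}\LB_{ks}$) with the three inequalities $y^{(k'l's')}_{ls} \geq \delta_{k'l's'}\LB_{ls}$ coming from the other disjuncts, and invoking $\sum \delta = 1$, yields \eqref{SU:lb}; a dual argument on upper bounds produces \eqref{SU:ub}; and a similar manipulation of $y^{(kls)}_{ks} - y^{(kls)}_{ls}$ yields \eqref{SU:prec}.

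The main obstacle is the reverse projection: given an arbitrary $(c,\delta)$ satisfying every SU inequality, one must explicitly construct disaggregated $y^{(kls)}$ with $\sum y^{(kls)} = c$ and $y^{(kls)} \in \delta_{kls} P_{kls}$. This is the technical crux, and is precisely where the hypothesis is needed in full force: if $\PM_{kls}$ met or exceeded $\UB_{ls} - \LB_{ks}$, the corresponding disjunct would be empty and no disaggregation could exist. I would proceed by first pinning down the precedence-coordinate values $y^{(kls)}_{ks}$ and $y^{(kls)}_{ls}$ using the slack granted by \eqref{SU:lb}--\eqref{SU:prec}, then distributing the off-axis coordinates with the box constraints. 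A Fourier--Motzkin-style case analysis would then confirm the aggregated SU description coincides with the projection of the Balas hull, establishing pairwise-idealness.
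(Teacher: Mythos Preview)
Your Balas-hull approach is valid and genuinely different from the paper's proof. The paper (Appendix~\ref{sec:analyticSU}) argues directly at the level of extreme points: it fixes a fractional relaxation solution, projects onto a single axis, and shows by exhaustive case analysis that at most three linearly independent constraints of the projected system can be simultaneously tight, so no fractional point can be a vertex of the full relaxation; the hypothesis $\PM_{kls} < \UB_{ls} - \LB_{ks}$ is invoked repeatedly to rule out specific tight-constraint combinations. Your route instead inherits idealness from Balas and reduces everything to showing that the SU inequalities are exactly the projection of the extended hull onto $(c,\delta)$. The forward inclusion is easy, as you sketch; the reverse---disaggregating an arbitrary SU-feasible $(c,\delta)$ into $y^{(kls)}\in\delta_{kls}P_{kls}$---is where the real work lives, and it is not materially shorter than the paper's case analysis: the off-axis coordinates must be distributed while simultaneously respecting box constraints in both directions and precedence in one, and this forces a case split of comparable depth. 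What your approach buys is a cleaner conceptual story (idealness is inherited rather than verified vertex-by-vertex) and a template that extends more naturally to other disjunctive embeddings; what the paper's approach buys is self-containment, with no appeal to external hull theorems.
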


An analytic proof for Theorem \ref{thm:SUideal} is available in Appendix \ref{sec:analyticSU}. This proof is a lengthy case analysis that is largely adapted from a proof in \cite[Appendix B]{StrongFloorLayout2017HuchetteVielma} for a similar formulation to the Floor Layout Problem. We present a computational proof in Section \ref{sec:SUideal}.

Also described in \cite[Eq. (19)]{StrongFloorLayout2017HuchetteVielma} is an embedding called the Refined Unary formulation which we present adapted to our setting and notation:
    \ParaTitle{The Refined Unary Formulation}{RU}\label{model:RU}
    \begin{NamedSubs}{RU}\begin{FeasRegion}
    c_{ls}  \GEQ  \LB_{ls} + (\LB_{ks} + \PM_{kls} - \LB_{ls})\delta_{kls}	
        & \forall\ (k,l,s)\in\combs_{ij} 	\label{RU:lb}   \\
    c_{ks}  \LEQ  \UB_{ks} + (\UB_{ls} - \PM_{kls} - \UB_{ks})\delta_{kls}	
        & \forall\ (k,l,s)\in\combs_{ij}  \label{RU:ub}   \vspace{.5em}\\
    \begin{aligned}
    c_{ks} - c_{ls}  \LEQ  \PM_{lks} &- (\PM_{lks} + \PM_{kls})\delta_{kls} \\
        &\quad + (\UB_{ks} - \PM_{lks} - \LB_{ls})\delta_{lks}
    \end{aligned}	
        & \forall\ (k,l,s)\in\combs_{ij} 	\label{RU:prec} \vspace{.5em} \\
    \delta_{ijs} + \delta_{jis}  \LEQ  1 & \forall\ s \in \{x,y\}	\label{RU:tight} \\
    \delta_{ijx} + \delta_{jix} + \delta_{ijy} + \delta_{jiy}  \GEQ  1 \label{RU:disj} \\
    \delta_{kls}  \IN  \{0,1\} & \forall\ (k,l,s)\in\combs_{ij}  \label{RU:indic}.
    \end{FeasRegion}\end{NamedSubs}
Constraint \eqref{RU:prec} contains two indicator variables and takes on three distinct states:
    \begin{equationarray*}{c@{\hskip20pt}c@{\hskip20pt}c@{\hskip30pt}c@{\hskip30pt}c}
    \delta_{kls} = 1,\ \delta_{lks} = 0  & \Rightarrow & c_{ls}  \GEQ  c_{ks} + \PM_{kls} & \Rightarrow & (k\preced{s}l)\\
    \delta_{kls} = 0,\ \delta_{lks} = 1  & \Rightarrow &  c_{ks} - c_{ls}  \LEQ  \UB_{ks} - \LB_{ls} & \Rightarrow & \text{inactive}\\
    \delta_{kls} = 0,\ \delta_{lks} = 0  & \Rightarrow &  c_{ks}  \LEQ  c_{ls} + \PM_{lks} & \Rightarrow & (l\npreced{s}k).
    \end{equationarray*}
The third state, which does not exist in \ref{model:SU}, adds strong spatial discretization to the model without requiring any additional variables. Notice that four binary variables are enough to encode $2^4 = 16$ distinct states. This formulation is an embedding of \ref{prob:RPP} using an eight-term disjunction $\disj_{ij}^8$ and has been shown to work very well in practice. The following theorem was conjectured in~\cite{StrongFloorLayout2017HuchetteVielma}, and we resolve their conjecture affirmatively here.

\begin{restatable}{theorem}{RUideal}\label{thm:RUideal}
\ref{model:RU} is pairwise-ideal if $\PM_{kls} < \UB_{ls} - \LB_{ks}$  for each $(k,l,s)\in\combs_{ij}$.
\end{restatable}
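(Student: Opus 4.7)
The plan is to reduce pairwise-idealness of \ref{model:RU} to the parametric idealness optimization developed in Section~\ref{sec:IdealOMatic} and verify the resulting instance computationally, mirroring the strategy announced for Theorem~\ref{thm:SUideal}. First I would restrict \ref{model:RU} to a single pair $(i,j) \in \pairs$, giving an LP relaxation in $\mathbb{R}^{8}$: four continuous variables $c_{is}, c_{js}$ for $s\in\{x,y\}$ confined by the box constraints \eqref{pbasic:otf}, and four relaxed indicators $\delta_{kls} \in [0,1]$ for $(k,l,s) \in \combs_{ij}$. The target is to show that no vertex of this relaxation has any $\delta_{kls}$ strictly fractional.

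Next, I would enumerate candidate vertices. Any vertex activates eight linearly independent constraints drawn from \eqref{RU:lb}--\eqref{RU:prec} (twelve rows total, four per inequality block), \eqref{RU:tight}--\eqref{RU:disj} (three rows), the indicator bounds \eqref{RU:indic}, and the continuous bounds from \eqref{pbasic:otf}. For each such basis the framework either certifies that the implied solution has integer $\delta$-components, or shows infeasibility. Because coefficients depend on the parameters $\LB, \UB, \PM$, the certificate must hold uniformly across the parameter region in which each $\PM_{kls} < \UB_{ls} - \LB_{ks}$; this is exactly the regime for which the nonconvex quadratic parametric optimization of Section~\ref{sec:IdealOMatic} is designed.

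The hypothesis $\PM_{kls} < \UB_{ls} - \LB_{ks}$ plays a genuine role in the reduction: it guarantees that every disjunct of \eqref{pbasic:disj} is achievable but not forced, so no trivial collapse of the model (for instance, a direction in which precedence is geometrically impossible) can manufacture a spurious fractional vertex. I would treat this condition as the feasible region for the parametric outer problem and confirm that the inner LP-based idealness check succeeds at every parameter point.

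The main obstacle is that \ref{model:RU} encodes the refined eight-term disjunction $\disj_{ij}^8$ rather than the four-term $\disj_{ij}^{4}$, and its precedence constraint \eqref{RU:prec} couples two indicator variables simultaneously; this creates substantially richer basis combinatorics than \ref{model:SU}. The cases in which \eqref{RU:tight} is tight at a fractional point (so both $\delta_{ijs}$ and $\delta_{jis}$ are strictly positive with sum exactly one) must be examined alongside the more familiar cases in which at most one indicator per direction is positive, and I expect these \eqref{RU:tight}-tight cases to be the binding ones. This coupling is, I suspect, exactly the reason a purely analytic case analysis has remained elusive since the conjecture was posed in \cite{StrongFloorLayout2017HuchetteVielma}; dispatching the cases via the framework rather than by hand is the crux of the proof.
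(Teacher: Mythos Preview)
Your high-level direction matches the paper's: both establish Theorem~\ref{thm:RUideal} by feeding the continuous relaxation of \ref{model:RU} into the parametric \ref{model:IOM} framework and certifying that the maximum fractionality penalty is zero. However, your execution diverges from the paper's at the step that actually does the work.

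You describe the argument as ``enumerate candidate vertices'' and check each basis for integrality or infeasibility. That is not what \ref{model:IOM} does, and it is not tractable parametrically: the bases and their rank depend on $\LB,\UB,\PM$, so a direct enumeration over all eight-row active sets would require case-splitting on the parameter space. The paper instead runs \ref{model:IOM} as a single nonconvex optimization, but this only works because constraint~\eqref{IOM:LinDep} is populated with a \emph{precomputed} list of linearly dependent constraint collections. Identifying that list is the content of Lemma~\ref{lem:RUCoversNecessary}, which exhibits four families of row subsets of \ref{model:rRU} that are linearly dependent for all admissible parameters; the computational proof then invokes Corollary~\ref{cor:IOMsubset} with exactly those covers. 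Your proposal never mentions dependence covers, and without them the \ref{model:IOM} instance you would build is either incomplete (missing \eqref{IOM:LinDep} entirely, so non-extreme points contaminate the search) or requires the full $\mathcal{I}$, which you have not shown how to obtain parametrically.

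A smaller point: the relaxation you describe adds the static box constraints \eqref{pbasic:otf} and upper bounds $\delta_{kls}\le 1$ as separate rows. The paper's \ref{model:rRU} keeps only $\delta_{kls}\ge 0$ from the indicator relaxation (the upper bounds are implied by \eqref{RUR:tight}) and relies on \eqref{RUR:lb}--\eqref{RUR:ub} for the continuous bounds; adding redundant rows is harmless for validity but inflates the basis combinatorics you would face.
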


We will present a computational proof of Theorem \ref{thm:RUideal} in Section \ref{sec:RUideal} as no analytic proof is known.

\subsection{Binary Embeddings}
\citet{StrongFloorLayout2017HuchetteVielma} also describe a formulation with less obvious interpretations. The $\disj_{ij}^4$ disjunction modeled by \ref{model:SU} has only four terms. By assigning each of its terms a distinct indicator vector $\boldsymbol{\bar\delta}_{kls} \in \{0,1\}^2$, the $\disj_{ij}^4$ disjunction can be modeled using only two binary indicator variables:

    \ParaTitle{The Simple Binary Formulation}{SB}\label{model:SB}
    \begin{NamedSubs}{SB}\begin{FeasRegion}
    c_{ls}  \GEQ  \LB_{ks} + \PM_{kls} - (\LB_{ks} + \PM_{kls} - \LB_{ls})\, \bcf{\boldsymbol{\bar\delta}_{kls},\boldsymbol{\delta}_{ij}}
        & \forall\ (k,l,s)\in\combs_{ij} \label{SB:lb}   \\
    c_{ks}  \LEQ  \UB_{ls} - \PM_{kls} - (\UB_{ls} - \PM_{kls} - \UB_{ks})\, \bcf{\boldsymbol{\bar\delta}_{kls},\boldsymbol{\delta}_{ij}}	 
        & \forall\ (k,l,s)\in\combs_{ij} \label{SB:ub}   \\   
    c_{ls} - c_{ks}  \GEQ  \PM_{kls} + (\LB_{ls} - \PM_{kls} - \UB_{ks})\, \bcf{\boldsymbol{\bar\delta}_{kls},\boldsymbol{\delta}_{ij}}
        & \forall\ (k,l,s)\in\combs_{ij} \label{SB:prec} \\
    c_{ks}  \in  [\LB_{ks},\UB_{ks}]
        & \forall\ (k,l,s)\in\combs_{ij} \label{SB:SBnds} \\
    \boldsymbol{\delta} \IN  \{0,1\}^2 \label{SB:indic}
    \end{FeasRegion}\end{NamedSubs}
where 
$\bcf:[0,1]^2 \times [0,1]^2 \to \mathbb{R}_+$ is a continuous approximation of the Inverse Boolean Comparison Function $\bcf^*\colon \{0,1\}^2 \times \{0,1\}^2 \to \{0,1\}$ defined
    \begin{equation}\label{BCF}
    \bcf^*{\mathbf{a},\mathbf{b}}  \EQ  [\mathbf{a}\boolcomp\mathbf{b}]  \EQ 
        \begin{cases}
        0, &\text{if }\mathbf{a}=\mathbf{b}, \\
        1, &\text{otherwise}.
        \end{cases}
    \end{equation}
    In particular, for $\bcf$ to produce valid inequalities, we require $\bcf^*(\mathbf{a}, \mathbf{b})\leq \bcf(\mathbf{a}, \mathbf{b}) \leq M \cdot \bcf^*(\mathbf{a}, \mathbf{b})$ for some value of $M\geq 1$ and all $\mathbf{a}, \mathbf{b} \in \{0,1\}^2$.
Note that \ref{model:SB} can be derived from \ref{model:SU} by replacing $\delta_{kls}$ with $1-\bcf{\boldsymbol{\bar\delta}_{kls},\boldsymbol{\delta}}$ and dropping constraint \ref{SU:disj}.

    \begin{table}[!ht]\centering
    \caption{An encoding based on two-digit Reflective Gray code alongside the reductions of $\bcf<\bar>$ and $\bcf<\tilde>$ for each of the codes.}
    \label{tab:G2Labeling}
    \begin{tabular}{c|c|c|c|c}
    $(k,l,s)$   & $\boldsymbol{\bar\delta}_{kls}$     & $\disj_{ij}$ term  &
        $\bcf<\bar>{\boldsymbol{\bar\delta},\boldsymbol{\delta}}$  &
        $\bcf<\tilde>{\boldsymbol{\bar\delta},\boldsymbol{\delta}}$  \\\hline
    $(i,j,x)$   & $(0,0)$                             & $i \preced{x} j$  &  $\delta_{ij} + \delta_{ji}$  &  $\delta_{ij} + \delta_{ji} - \Delta_{ij}$  \\
    $(i,j,y)$   & $(1,0)$                             & $i \preced{y} j$  &  $1 - \delta_{ij} + \delta_{ji}$  &  $1 - \delta_{ij} + \Delta_{ij}$  \\
    $(j,i,x)$   & $(1,1)$                             & $j \preced{x} i$  &  $2 - \delta_{ij} - \delta_{ji}$  &  $1 - \Delta_{ij}$  \\
    $(j,i,y)$   & $(0,1)$                             & $j \preced{y} i$  &  $1 + \delta_{ij} - \delta_{ji}$  &  $1 - \delta_{ji} + \Delta_{ij}$
    \end{tabular}
    \end{table}

Our implementations assign the indicator codes $\boldsymbol{\bar\delta}$ according to Table \ref{tab:G2Labeling}. The choice to use a reflective Gray code is not uniquely valid, but this particular order gives the basis of the FLP-SP formulation introduced in \cite{SequencePair2007MellerChenSherali} and lends itself well to the addition of some known symmetry-breaking inequalities which we will discuss in Section \ref{sec:ValidInequalities}. The FLP variant of this formulation is given in \cite[Section 5.2]{StrongFloorLayout2017HuchetteVielma}. The BLDP1 formulation from \cite[Eqns. (37)-(43)]{BlockLayout2005CastilloWesterlund} arises from a different assignment of indicator codes. Our first implementation of \ref{model:SB} remains linear by using the absolute difference function to approximate $\bcf^*$.

    \ParaTitle{The Simple Binary Linear Formulation}{SB-L}\label{model:SB-L}
    An implementation of \ref{model:SB} where $\bcf$ is actualized as
    \begin{align}\begin{split}
    \bcf<\bar>{\mathbf{a},\mathbf{b}}  &\EQ  \Vert\mathbf{a}-\mathbf{b}\Vert_1 
        \EQ  \Vert\mathbf{a}\Vert_1 - 2\mathbf{a}^\top\mathbf{b} + \Vert\mathbf{b}\Vert_1 \\
        &\EQ  a_1 + a_2 - 2a_1b_1 - 2a_2b_2 + b_1 + b_2. \label{BCF:Linear}
    \end{split}\end{align}
Letting $\boldsymbol{\delta} = (\delta_{ij},\delta_{ji})$, the reduction of $\bcf<\bar>{\boldsymbol{\bar\delta},\boldsymbol{\delta}}$ for each input code $\boldsymbol{\bar\delta}$ is given in Table \ref{tab:G2Labeling}.

By letting $\boldsymbol{\delta}_{ij} = (\delta_{ij},\delta_{ji})$ and assigning $\boldsymbol{\bar\delta}$ according to Table \ref{tab:G2Labeling}, constraint \eqref{SB:prec} reduces to
    \begin{subequations}
    \begin{align}
    c_{jx} - c_{ix}  &\GEQ  \PM_{ijx} + (\LB_{jx} - \PM_{ijx} - \UB_{ix})\,
        (\delta_{ij} + \delta_{ji}) 		\label{SB:iPjx}\\
    c_{jy} - c_{iy}  &\GEQ  \PM_{ijy} + (\LB_{jy} - \PM_{ijy} - \UB_{iy})\, 
        (1 - \delta_{ij} + \delta_{ji}) 	\label{SB:iPjy}\\
    c_{ix} - c_{jx}  &\GEQ  \PM_{jix} + (\LB_{ix} - \PM_{jix} - \UB_{jx})\, 
        (2 - \delta_{ij} - \delta_{ji}) 	\label{SB:jPix}\\
    c_{iy} - c_{jy}  &\GEQ  \PM_{jiy} + (\LB_{iy} - \PM_{jiy} - \UB_{jy})\, 
        (1 + \delta_{ij} - \delta_{ji}) 	\label{SB:jPiy}
    \end{align}
    \end{subequations}
As prescribed by Table \ref{tab:G2Labeling}, if $\boldsymbol{\delta} = (0,0)$, then \eqref{SB:iPjx} enforces $i \preced{x} j$; if $\boldsymbol{\delta} = (1,0)$, then \eqref{SB:iPjy} enforces $i \preced{y} j$; if $\boldsymbol{\delta} = (1,1)$, then \eqref{SB:jPix} enforces $j \preced{x} i$; and if $\boldsymbol{\delta} = (0,1)$, then \eqref{SB:jPiy} enforces $j \preced{y} i$. Since $\boldsymbol{\delta}$ must take one of these four vector-values, one of the terms of $\disj_{ij}^4$ will be enforced. Constraints \eqref{SB:lb} and \eqref{SB:ub} respectively behave like \eqref{SU:lb} and \eqref{SU:ub}; dynamically enforcing tight upper and lower bounds on the $\mathbf{c}$ variables. Despite being derivable from \ref{model:SU}, \ref{model:SB-L} does not inherit pairwise-idealness.

\begin{theorem}\label{thm:SBLNotIdeal}
\ref{model:SB-L} is \textit{not} pairwise-ideal in general.   
\end{theorem}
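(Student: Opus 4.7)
The plan is to refute pairwise-idealness by constructing an explicit fractional extreme point of the continuous relaxation of \ref{model:SB-L}. I would work in the simplest nontrivial setting: two unit squares in a $3 \times 3$ region with no clearances, giving $\LB_{ks} = 1/2$, $\UB_{ks} = 5/2$, and $\PM_{kls} = 1$ for every $(k,l,s)$. These parameters satisfy the hypothesis $\PM_{kls} < \UB_{ls} - \LB_{ks}$ under which \ref{model:SU} is ideal (Theorem~\ref{thm:SUideal}), so the failure will be attributable to the binary aggregation in $\bcf<\bar>$ rather than to a degenerate input.

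The candidate point I would propose is $(c_{ix}, c_{iy}, c_{jx}, c_{jy}) = (5/2, 5/2, 1/2, 1/2)$ paired with $(\delta_{ij}, \delta_{ji}) = (1/2, 1/2)$. The motivating observation is that $\boldsymbol{\delta} = (1/2, 1/2)$ is $\ell_1$-equidistant from all four Gray codes $\boldsymbol{\bar\delta}_{kls}$ in Table~\ref{tab:G2Labeling}, so $\bcf<\bar>$ evaluates to exactly $1$ on every combination. Substituting $\bcf<\bar> = 1$ collapses \eqref{SB:lb}--\eqref{SB:ub} into the plain box $c_{ls} \in [\LB_{ls}, \UB_{ks}]$, whose four opposite-corner bounds the candidate $\mathbf{c}$ attains, and reduces each precedence inequality \eqref{SB:iPjx}--\eqref{SB:jPiy} to $c_{ls} - c_{ks} \geq \LB_{ls} - \UB_{ks} = -2$; the first two, for $(i,j,x)$ and $(i,j,y)$, are realized with equality at the candidate.

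Verification then splits into routine feasibility (substitute into every constraint of \eqref{SB:lb}--\eqref{SB:indic}) and an extremality argument. For the latter I need six linearly independent tight constraints in the six variables $(c_{ix}, c_{iy}, c_{jx}, c_{jy}, \delta_{ij}, \delta_{ji})$. The four tight $c$-bounds contribute the standard basis vectors, and the two tight precedence rows have $\boldsymbol{\delta}$-coefficient vectors $(3,3)$ and $(-3,3)$ respectively. After eliminating the $\mathbf{c}$-components using the bound rows, the resulting $2 \times 2$ block in $\boldsymbol{\delta}$ has determinant $18 \neq 0$, so all six rows are independent. The candidate is therefore a fractional extreme point of the LP relaxation, establishing the theorem.

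The main obstacle is really in searching for the candidate rather than in verifying it: most fractional values of $\boldsymbol{\delta}$ leave every precedence constraint strictly slack, so $\boldsymbol{\delta}$ remains unrestricted and cannot pin down a vertex. The symmetric midpoint $(1/2, 1/2)$ is distinguished because pairing it with the extremal $\mathbf{c}$-corner $(\UB_{ix}, \UB_{iy}, \LB_{jx}, \LB_{jy})$ forces two precedence constraints to bind simultaneously, supplying exactly the two independent equations in $\boldsymbol{\delta}$ needed to complete a fractional basis. Alternatively, the Ideal-O-Matic framework of Section~\ref{sec:IdealOMatic} provides an automated way to produce such certificates.
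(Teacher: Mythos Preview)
Your proposal is correct and follows essentially the same approach as the paper: both exhibit the fractional extreme point with $\boldsymbol{\delta}=(\tfrac12,\tfrac12)$ and $\mathbf{c}$ at the opposite-corner configuration $(\UB_{ix},\UB_{iy},\LB_{jx},\LB_{jy})$, differing only in the scaling of the instance (the paper uses $2\times2$ squares in a $10\times10$ region) and in which six tight constraints are selected for the rank argument. Your write-up is in fact more transparent, explaining why the midpoint $\boldsymbol{\delta}=(\tfrac12,\tfrac12)$ is the natural candidate and making the rank computation explicit.
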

\begin{proof}
Consider the problem instance with two $2\times2$ objects with no clearances in a $10\times 10$ region so that $\LB_{1x},\LB_{2x},\LB_{1y},\LB_{2y} = 1$, $\UB_{1x},\UB_{2x},\UB_{1y},\UB_{2y} = 9$, and $\PM_{12x},\PM_{21x},\PM_{12y},\PM_{21y} = 2$. Notice that $\eqref{SB:lb}$, $\eqref{SB:ub}$, and $\eqref{SB:prec}$ with $\bcf \leftarrow \bar\bcf$, as in \ref{model:SB-L}, are given by
    $$
    \begin{bmatrix}
    0      & 1       & 0       & 0       & 2            & 2           \\
    0      & 0       & 0       & 1       & -2           & 2           \\
    -1     & 0       & 0       & 0       & 2            & 2           \\
    0      & 0       & -1      & 0       & -2           & 2           \\
    -1     & 1       & 0       & 1       & 10           & 10          \\
    0      & 0       & -1      & 1       & -10          & 10          
    \end{bmatrix}
    \begin{bmatrix}
    c_{1x} \\ c_{2x} \\ c_{1y} \\ c_{2y} \\ \delta_{12} \\ \delta_{21}
    \end{bmatrix}
    \GEQ
    \begin{bmatrix}
    3      \\ 1      \\ -7     \\ -9     \\ 2           \\ -8
    \end{bmatrix}
    $$
which is full rank and satisfied at equality by $c_{1x} = 9$, $c_{2x} = 1$, $c_{1y} = 9$, $c_{2y} = 1$, $\delta_{12} = \tfrac{1}{2}$, and $\delta_{21} = \tfrac{1}{2}$. So, the above solution is a fractional extreme point to the MBLP which implies that $\ref{model:SB-L}$ is not generally pairwise-ideal.
\end{proof}

The linear comparison function $\bcf<\bar>$ works well in practice, but has a significant flaw which precludes pairwise idealness: if neither entry in $\mathbf{b}$ matches its counterpart in and $\mathbf{a}$, then $\bcf<\bar>{\mathbf{a},\mathbf{b}} = 2$. For example:
    \begin{align*}
    \bcf<\bar>{\tvect{0}{0},\tvect{1}{1}}  &\EQ  \left\Vert\tvect{0}{0}\right\Vert_1 
        - 2\tvect{0}{0}^\top\tvect{1}{1} + \left\Vert\tvect{1}{1}\right\Vert_1 \\
        &\EQ  0 - 2(0) + 2  \EQ  2.
    \end{align*}
Consider the ramifications of this on \eqref{SB:prec}:
    \begin{align*}
    \PM_{kls} &+ 2(\LB_{ls} - \PM_{kls} - \UB_{ks}) \\
        &\EQ  \LB_{ls} - \UB_{ks} + (\LB_{ls} - \PM_{kls} - \UB_{ks}) \\
        &\EQ  \LB_{ls} - \UB_{ks} + (\m_{ls}^- + \tfrac{1}{2}\d_{ls} - \tfrac{1}{2}\d_{ks} - \max\{\m_{ks}^+,\m_{ls}^-\} - \tfrac{1}{2}\d_{ls} - \r_s + \m_{ks}^+ + \tfrac{1}{2}\d_{ks})\\
        &\EQ  \LB_{ls} - \UB_{ks} + (\m_{ls}^- + \m_{ks}^+ - \max\{\m_{ks}^+,\m_{ls}^-\} - \r_s)\\
        &\LEQ \LB_{ls} - \UB_{ks} + (\m_{ls}^- - \r_s)\\
        &\LS \LB_{ls} - \UB_{ks} \\
        &\LEQ c_{ls} - c_{ks}.
    \end{align*}
Thus, \eqref{SB:prec} is not facet defining since it is not tight to the convex hull where $\bcf<\bar>{\boldsymbol{\bar\delta},\boldsymbol{\delta}} = 2$. Similar failures can be shown for \eqref{SB:lb} and \eqref{SB:ub} under $\bcf\leftarrow\bcf<\bar>$. Hence the inclusion of the static bounds \eqref{SB:SBnds}.

Correcting this non-idealness is as simple as adjusting $\bcf<\bar>$ to subtract the cases where it takes the value two. Consider
    \begin{equation}
    \bcf<\tilde>{\mathbf{a},\mathbf{b}}  \EQ  \bcf<\bar>{\mathbf{a},\mathbf{b}} - \vert a_1-b_1\vert\vert a_2-b_2\vert.
    \end{equation}
A value of 1 is subtracted if and only if $[\mathbf{a} - \mathbf{b}] = \boldsymbol{1}$. By exploiting its binary domain, $\bcf<\tilde>$ can be written in a closed form which is multilinear in $\mathbf{b}$:
    \begin{align}\begin{split}
    \bcf<\tilde>{\mathbf{a},\mathbf{b}}  &\EQ  b_1 + b_2 - b_1b_2 + a_1(1-a_2)(1 - 2b_1 - b_2 + 2b_1b_2) \\
        &\hspace{3em} + (1-a_1)a_2(1 - b_1 - 2b_2 + 2b_1b_2) + a_1a_2(1 - b_1 - b_2).
    \end{split}\end{align}
By introducing the auxiliary variable $\Delta_{ij}$ and applying the McCormick envelope of $\Delta_{ij} = \delta_{ij}\delta_{ji}$, it is possible to represent $\bcf<\tilde>$ linearly.

    \ParaTitle{The Simple Binary Multilinear Formulation}{SB-M}\label{model:SB-M}\
    An implementation of \ref{model:SB} where: $\bcf$ is actualized as
    \begin{align}
    \begin{split}
    \bcf<\tilde>{\boldsymbol{\bar\delta},\boldsymbol{\delta}}  &\EQ  \delta_{ij} + \delta_{ji} - \Delta_{ij} + \bar\delta_1(1-\bar\delta_2)(1 - 2\delta_{ij} - \delta_{ji} + 2\Delta_{ij}) \\
        &\hspace{3em} + (1-\bar\delta_1)\bar\delta_2(1 - \delta_{ij} - 2\delta_{ji} + 2\Delta_{ij}) + \bar\delta_1\bar\delta_2(1 - \delta_{ij} - \delta_{ji}),
    \end{split}
    \end{align}
    and the following constraints on the variable $\Delta_{ij}$ are added
    \begin{NamedSubs}{SB-M}\begin{FeasRegion}[align]
    \delta_{ij} + \delta_{ji} - \Delta_{ij}  &\LEQ  1 \\
                  \delta_{ij} - \Delta_{ij}  &\GEQ  0 \\
                  \delta_{ji} - \Delta_{ij}  &\GEQ  0 \\
                                \Delta_{ij}  &\GEQ  0,
    \end{FeasRegion}\end{NamedSubs}
    and the static bounds \eqref{SB:SBnds} on $\mathbf{c}$ are dropped.

\vspace{1em}\noindent Letting $\boldsymbol{\delta} = (\delta_{ij},\delta_{ji})$, the reduction of $\bcf<\tilde>{\boldsymbol{\bar\delta},\boldsymbol{\delta}}$ for each input code $\boldsymbol{\bar\delta}$ is given in Table \ref{tab:G2Labeling}.
    
\begin{restatable}{theorem}{SBMideal}\label{thm:SBMideal}
\ref{model:SB-M} is pairwise-ideal if $\PM_{kls} < \UB_{ls} - \LB_{ks}$ for each $(k,l,s) \in \combs_{ij}$.
\end{restatable}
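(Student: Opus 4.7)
The plan is to invoke the computer-aided idealness verification framework developed in Section~\ref{sec:IdealOMatic}, following the same pattern used to establish Theorems~\ref{thm:SUideal} and~\ref{thm:RUideal} computationally in Section~\ref{sec:compproofs}. Semantically, the statement is that no basic feasible solution of the pairwise LP relaxation of \ref{model:SB-M} has a fractional value of $\delta_{ij}$, $\delta_{ji}$, or $\Delta_{ij}$, under the stated parameter hypothesis.

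First I would write out the pairwise relaxation explicitly. For a single pair $(i,j)$ the variables are $c_{ix}, c_{iy}, c_{jx}, c_{jy}$, the relaxed binaries $\delta_{ij}, \delta_{ji} \in [0,1]$, and the auxiliary McCormick variable $\Delta_{ij}$, for a total of seven. After substituting the Gray-code labels $\boldsymbol{\bar\delta}_{kls}$ from Table~\ref{tab:G2Labeling} and the closed form of $\bcf<\tilde>$ into \eqref{SB:lb}--\eqref{SB:prec}, each of those three constraint families reduces to four explicit inequalities that are linear in $(c,\delta_{ij},\delta_{ji},\Delta_{ij})$; the four McCormick constraints adjoined in the definition of \ref{model:SB-M} are already linear; and per the model specification the static bounds \eqref{SB:SBnds} on $\mathbf{c}$ are dropped. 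Pairwise-idealness then amounts to the claim that every choice of seven linearly independent tight constraints forces $(\delta_{ij},\delta_{ji},\Delta_{ij})$ to be integer.

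Next I would hand the resulting system to the framework. As described in Section~\ref{sec:IdealOMatic}, the search for a fractional extreme point is encoded as a feasibility problem that is linear in the decision variables and nonconvex quadratic in the parameters $\d,\boldsymbol{\m},\r$, with the hypothesis $\PM_{kls} < \UB_{ls} - \LB_{ks}$ included as a parameter-side constraint. For each candidate basis of seven tight constraints the framework either certifies infeasibility (ruling out fractional extreme points of that combinatorial type over the entire admissible parameter range) or returns a concrete counterexample like the one exhibited in Theorem~\ref{thm:SBLNotIdeal}. Establishing Theorem~\ref{thm:SBMideal} amounts to certifying infeasibility for every candidate basis, exactly as in the computational proofs of Theorems~\ref{thm:SUideal} and~\ref{thm:RUideal}.

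The main obstacle will be combinatorial. Compared with \ref{model:SU} and \ref{model:RU} the extra variable $\Delta_{ij}$ enlarges the polytope to $\mathbb{R}^7$ and introduces four additional McCormick facets, so the number of seven-element candidate bases grows substantially; moreover these facets interact nontrivially with the precedence constraints, since the $\bcf<\tilde>$ substitution couples $\Delta_{ij}$ directly into \eqref{SB:prec}. I would prune the enumeration using the $i\leftrightarrow j$ and $x\leftrightarrow y$ symmetries of the pairwise problem, and I would pay particular attention to bases in which one or more McCormick inequalities are tight simultaneously with a precedence constraint, as these are the situations in which a spurious fractional $\delta$-assignment could be compensated by a fractional $\Delta_{ij}$ rather than flagged as infeasible. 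Any suspicious candidate surfaced by the solver would be cross-checked by direct substitution in the spirit of the counterexample constructed for Theorem~\ref{thm:SBLNotIdeal}.
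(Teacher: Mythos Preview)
Your proposal invokes the \ref{model:IOM} framework but omits its non-trivial ingredient: the dependence covers \eqref{IOM:LinDep}. The \ref{model:IOM} program does not enumerate candidate bases; it is a single optimization over binary selectors $\boldsymbol{\eta}$ indicating which seven constraints are tight, and without \eqref{IOM:LinDep} it will happily return a fractional point whose seven tight constraints are linearly \emph{dependent}, hence not extreme at all. The entire content of the paper's proof of Theorem~\ref{thm:SBMideal} is Lemmas~\ref{lem:SBMCoversNecessary1} and~\ref{lem:SBMCoversNecessary2}, which identify fifteen specific row-collections of \ref{model:rSB} that are linearly dependent (some unconditionally, some under the hypothesis $\PM_{kls}\neq\UB_{ls}-\LB_{ks}$). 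These are loaded into $\mathcal{I}$, and Corollary~\ref{cor:IOMsubset} is what licenses the conclusion once the parametric \ref{model:IOM} returns zero. Your plan says nothing about finding or certifying these covers, and the ``pruning via $i\leftrightarrow j$, $x\leftrightarrow y$ symmetry'' you describe addresses a different (combinatorial-blowup) concern, not the linear-dependence one.

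Two smaller points. First, your phrase ``for each candidate basis of seven tight constraints the framework either certifies infeasibility\ldots'' is not how \ref{model:IOM} operates; if you instead intend a genuine basis-enumeration scheme distinct from \ref{model:IOM}, you would then need to decide, for each seven-row subset, whether it is linearly independent \emph{as a function of the parameters} $\LB,\UB,\PM$, which is exactly the difficulty the dependence-cover lemmas resolve. Second, $\Delta_{ij}$ is a continuous auxiliary in \ref{model:SB-M}, so idealness only requires $\delta_{ij},\delta_{ji}\in\{0,1\}$ at extreme points; integrality of $\Delta_{ij}$ is a consequence via the McCormick constraints, not part of the claim.
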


We will present a computational proof of Theorem \ref{thm:SBMideal} in Section \ref{sec:SBMideal} as no analytic proof is known.

    \begin{table}[!ht]\centering
        \caption{The sizes of each formulation with respect to the number of pairs $\binom{n}{2}$ of objects to be packed. An asterisk $*$ indicates that the formulation is known to be pairwise ideal under some widely applicable conditions.}
        \label{tab:FormSizes}
        \begin{tabular}{|c||c|c|c||c|c||c|}
        \hline \multirow{2}{*}{Formulation} & \multicolumn{3}{c||}{Constraints} & \multicolumn{2}{c||}{Auxiliary Variables} & Disjunctive      \\\cline{2-6}
                         & Precedence      & Bounds          & Logic           & Continuous & Binary          & Terms           \\\hline
        \ref{model:SU}*   & $4$  & $8$  & $1$  & $0$  & $4$ & $4$ \\ 
        \ref{model:RU}*   & $4$  & $8$  & $3$  & $0$  & $4$ & $8$ \\
        \ref{model:SB-L}  & $4$  & $8$  & $0$  & $0$  & $2$ & $4$ \\
        \ref{model:SB-M}* & $4$  & $8$  & $3$  & $1$  & $2$ & $4$ \\
        \hline\end{tabular}
    \end{table}

\subsection{Computational Results}\label{sec:CutsTest}
In this section, we report the results of several computational experiments concerning the Unary and Binary formulations. All tests were run in Gurobi through the Python API on a Dell Precision 5820 X-Series workstation with 64 gigabytes of RAM, an AMD Radeon Pro WX2100, and an 18-core Intel Core i9-10980XE running at three gigahertz.

We chose \hyperlink{https://sites.google.com/gcloud.fe.up.pt/ cutting-and-packing-tools/2dcpackgen}{2DCPackGen} \cite{2DCPackGen2014SilvaOliveiraWascher}, a random instance generator for a variety of two-dimensional cutting and packing problems, to generate strip packing instances. All problems feature a 100-unit wide strip and objects between 5 and 30 units in either direction according to a random sampling of the Beta(2,5) distribution (the ``small and square" characteristic). Instances were generated with each of $\numobjs = 10, 15, 20$, and $25$ before clearances were added. Each side of each object has a 50\% chance to be given a clearance according to a uniform distribution between zero and the dimension of the parent object. Our problem instances are available, alongside all the code used to generate and solve them, at \hyperlink{https://github.com/jfravel/Ideal-O-Matic}{https://github.com/jfravel/Ideal-O-Matic}.

While Gurobi usually finds initial feasible solutions to the Strip Packing Problem quickly, they are often relatively sparse and result in poor initial estimations of the optimal height. We implemented a greedy heuristic to generate better initial solutions: (1) Order the objects in increasing order of height ($\m_{iy}^- + \d_{iy} + \m_{iy}^+$); (2) Pack objects in a row along the base of the strip until adding another object would overflow the bounds; (3) given the tallest physical object and tallest clearance of the first row, pack objects in a row above the first until adding another object would overflow the bounds while making sure that no occlusion occurs between the rows; and (4) repeat step (3) until all object have been packed. See Figure \ref{fig:InitialSolutions} for an example of Gurobi's initial solutions and those generated greedily. Clearly, more improvement could be made with techniques like those described in \cite{GeneticStripPacking2006Bortfeldt}, \cite{ApplicationFixedLayout2021McKendallHakobyan}, or \cite{FireflyFixedLayout2017IngoleSingh} but our focus is on exact methods and this greedy heuristic is a simple way to ensure that each of the formulations start from an equitable initial solution.

    \begin{figure}[!ht]\centering
    \begin{subfigure}{.3\textwidth}\centering
    \includegraphics[height=.25\textheight]{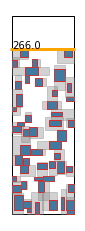}
    \caption{Gurobi's solution starts with an optimality gap of 84.6\%.}
    \end{subfigure}\hspace{.5in}
    \begin{subfigure}{.3\textwidth}\centering
    \includegraphics[height=.25\textheight]{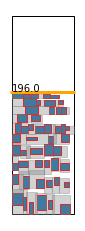}
    \caption{The greedy solution starts with an optimality gap of 79.1\%.}
    \end{subfigure}
    \caption{Two possible initial solutions to the $\numobjs=50$ Strip Packing Problem.}
    \label{fig:InitialSolutions}
    \end{figure}

Our first experiment involves solving each of the strip packing problems under each of the formulations with each combination of Gurobi's Cuts and Heuristics parameters on or off. The results can be found in Table \ref{tab:CutsImprove} below and Table \ref{tab:CutsTest} in the appendix. Notice that both cuts and heuristics seem to have a significant negative effect on the convergence of the IP; we will discuss this further in Section \ref{sec:Cuts}.

    \begin{table}[!ht]\centering
    \caption{The percent improvement seen in the \ref{model:SU}, \ref{model:RU}, and \ref{model:SB} formulations when turning off cuts or heuristics in terms of convergence and nodes visited. Positive value indicates a reduction in the metric while negative values indicate worse performance. An asterisk (*) indicates that the model converged with the given parameters where it did not under default settings. Columns C and H indicate if cuts and heuristics, respectively, were active (\checkmark) or disabled (\xmark). More detailed results are given in Table \ref{tab:CutsTest} in Appendix \ref{sec:AdditionalTablesAndPlots}.}
    \label{tab:CutsImprove}
    \setlength{\tabcolsep}{4pt}

\begin{tabular}{|ccc||cc||cc||cc||cc||}
\cline{4-11}\multicolumn{1}{c}{} &       &       & \multicolumn{2}{c||}{SU} & \multicolumn{2}{c||}{RU} & \multicolumn{2}{c||}{SB-L} & \multicolumn{2}{c||}{SB-M} \\
\hline
N     & C     & H     & Time/Gap & Nodes & Time/Gap & Nodes & Time/Gap & Nodes & Time/Gap & Nodes \\
\hline
\multirow{3}[2]{*}{10} & \cmark & \xmark & 95\%  & 42\%  & 90\%  & -42\% & 86\%  & -37\% & 76\%  & -110\% \\
      & \xmark & \cmark & 75\%  & 26\%  & 21\%  & -61\% & 64\%  & 4\%   & 43\%  & -25\% \\
      & \xmark & \xmark & 97\%  & 40\%  & 95\%  & -5\%  & 95\%  & 23\%  & 92\%  & -46\% \\
\hline
\multirow{3}[2]{*}{15} & \cmark & \xmark & 0\%   & -36\% & -194\% & -12\% & -33\% & -68\% & 11\%  & -103\% \\
      & \xmark & \cmark & 40\%  & -230\% & *     & -128\% & 33\%  & -268\% & 33\%  & -78\% \\
      & \xmark & \xmark & 40\%  & -1154\% & *     & -521\% & 33\%  & -2756\% & 22\%  & -355\% \\
\hline
\multirow{3}[2]{*}{20} & \cmark & \xmark & 15\%  & -588\% & -9\%  & -128\% & -3\%  & -214\% & -2\%  & -170\% \\
      & \xmark & \cmark & 17\%  & -287\% & 0\%   & -83\% & 4\%   & -109\% & -2\%  & -33\% \\
      & \xmark & \xmark & 24\%  & -3641\% & 9\%   & -1749\% & 8\%   & -1126\% & 6\%   & -511\% \\
\hline
\multirow{3}[2]{*}{25} & \cmark & \xmark & 5\%   & -326\% & -1\%  & -184\% & 4\%   & -377\% & -2\%  & -208\% \\
      & \xmark & \cmark & 1\%   & -218\% & 8\%   & -67\% & -1\%  & -185\% & 0\%   & -47\% \\
      & \xmark & \xmark & 21\%  & -2867\% & 21\%  & -1490\% & 7\%   & -1517\% & -1\%  & -470\% \\
\hline
\end{tabular}%

    \end{table}

\section{Model Enhancements}\label{sec:ModelVariations}
In this section, we describe several modifications which can be made to the above formulations. These include symmetry breaking inequalities and branching priorities. We perform a computational experiment on these variations before proceeding.

\subsection{Static Bounds}\label{sec:StaticBounds} 
Notice that constraints \eqref{SU:lb}, \eqref{SU:ub}, \eqref{RU:lb}, \eqref{RU:ub}, \eqref{SB:lb}, and \eqref{SB:ub} place upper and lower bounds on the $\mathbf{c}$ variables which vary according to $\boldsymbol{\delta}$. We include these dynamic bounds in keeping with~\cite{StrongFloorLayout2017HuchetteVielma} despite their non-necessity. More sparse, valid formulations can be constructed by dropping each of these and replacing them with the static bounds $\LB_{is} \leq c_{is} \leq \UB_{is}$ for each object $i\in\setobjs$ and each direction $s\in\{x,y\}$. In fact, doing this does not affect the pairwise idealness of the parent formulation. We experiment with static bounds for the sake of completeness.

\subsection{Symmetry Breaking Inequalities}\label{sec:ValidInequalities}
Consider a set of three objects $i$, $j$, and $k$ and notice that if $i\preced{s}j$ and $j\preced{s}k$, then it must also be that $i\preced{s}k$. This relationship can be enforced in \ref{model:SU} and \ref{model:RU} by adding the inequality $\delta_{ijs} + \delta_{jks} - \delta_{iks}  \LEQ  1$ and holds for any permutation of the triple:

    \ParaTitle{The Unary Sequence-Pair Inequalities}{SPU}\label{cuts:SPU}
    \begin{FeasRegion}*
    \delta_{ijs} + \delta_{jks} - \delta_{iks}  \LEQ  1     & \forall\ (i,j,k)\in \mathscr{T},\ s\in\{x,y\}
    \end{FeasRegion}

where $\mathscr{T}:= \{i,j,k \in \setobjs: i < j < k\}$ is the set of distinct triples of object indices. This can be derived by applying the lower McCormick envelope to  $\delta_{iks} \geq \delta_{ijs}\delta_{jks}$ for all triples $(i,j,k)$ of objects.

The indicator variables lose their obvious interpretations in the binary formulations, but similar sequence inequalities are still possible. Described in \cite[Appendix D.2]{StrongFloorLayout2017HuchetteVielma} but originally from \cite{SequencePair2007MellerChenSherali}, the following inequalities are valid for any three objects $\object_i$, $\object_j$, and $\object_k$ under \ref{model:SB}:

    \ParaTitle{The Binary Sequence-Pair Inequalities}{SPB}\label{cuts:SPB}
    \begin{FeasRegion}*
    0  \LEQ  \delta_{ij} + \delta_{jk} - \delta_{ik}  \LEQ  1 	
        & \forall\ (i,j,k)\in\mathscr{T} \nonumber\\
    0  \LEQ  \delta_{ji} + \delta_{kj} - \delta_{ki}  \LEQ  1 	
        & \forall\ (i,j,k)\in\mathscr{T} \nonumber
    \end{FeasRegion}
\noindent  
These inequalities do not remove any feasible solutions in the space of $\mathbf{c}$ variables, but they can reduce symmetry in the space of $\boldsymbol{\delta}$ variables. See Figure \ref{fig:SSBsymetry} for an example.

    \begin{figure}[!ht]\centering
        \begin{tikzpicture}[font=\Large]
        \draw[line width=1pt] (4,2.5) rectangle (6,4.5);
        \node at (5,3.5) {$\object_j$};
        \draw[line width=1pt] (2,1.25) rectangle (4,3.25);
        \node at (3,2.25) {$\object_i$};
        \draw[line width=1pt] (0,0) rectangle (2,2);
        \node at (1,1) {$\object_k$};
        \end{tikzpicture}
    \caption{A symmetric solution. It is clear that $i\preced{x}j$, $k\preced{y}j$, and $k\preced{x}i$ which implies an indicator solution of $\boldsymbol{\delta}_{ij} = (0,0)$, $\boldsymbol{\delta}_{jk} = (0,1)$, and $\boldsymbol{\delta}_{ik} = (1,1)$. However, this violates the first inequality of \ref{cuts:SPB}. Instead, it is necessary to pick the alternative $k\preced{x}j$ so that $\boldsymbol{\delta}_{jk} = (1,1)$.}
    \label{fig:SSBsymetry}
    \end{figure}
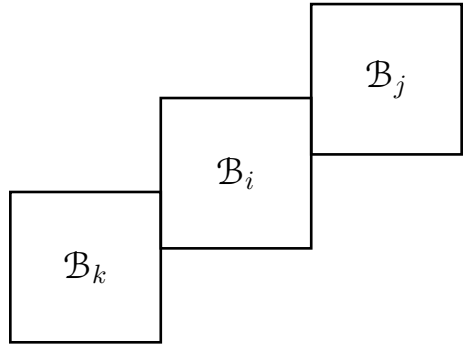
    
The Sequence Pair Inequalities do not affect the pairwise idealness of the parent formulation since their implementation requires at least three objects.

\subsection{Cuts and Branching}\label{sec:Cuts}
According to experimental results in given in Table \ref{tab:CutsImprove}, leaving Gurobi's cuts and heuristics parameters active seriously hinders the convergence rate of \ref{model:SU} and \ref{model:RU} in particular. Figure \ref{fig:CutsTestPlots} plots the convergence of these formulations, varying Gurobi's cuts and heuristics parameters, for $\numobjs = 25$ (the same runs from Table \ref{tab:CutsImprove}). Plotted adjacent is the number of iterations per node according to Gurobi's log files. Notice that the number of iterations per node appears to be strongly correlated with the rate of convergence; in particular, both \ref{model:SU} and \ref{model:RU} show the fastest convergence and lowest iterations per node with cuts and heuristics both deactivated. 
    \begin{figure}[!ht]\centering
    \begin{subfigure}[b]{0.85\textwidth}\centering
        \includegraphics[width=\textwidth]{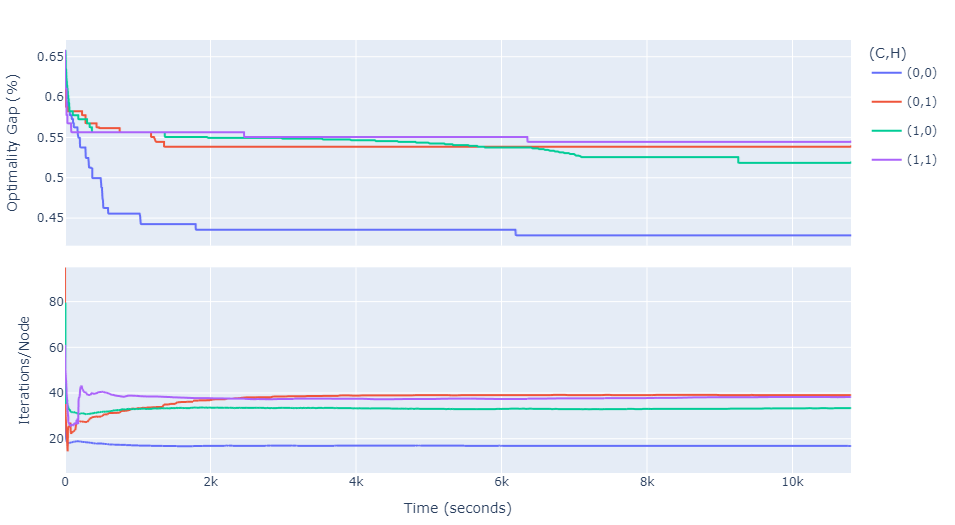}
        \caption{Cuts and Heuristics test for \ref{model:SU} Strip Packing with $\numobjs = 25$.}
        \label{fig:CutsTestPlot:SU25}
    \end{subfigure}
    \begin{subfigure}[b]{0.85\textwidth}\centering
        \includegraphics[width=\textwidth]{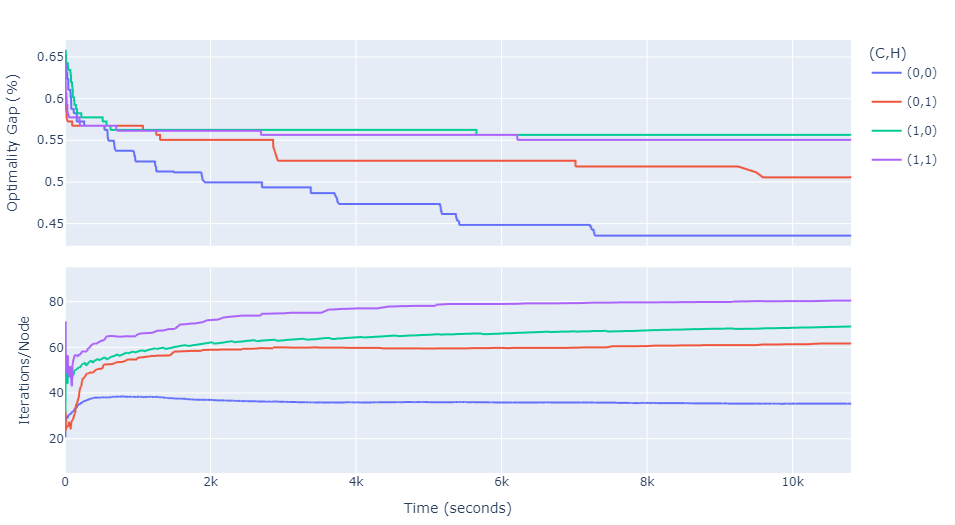}
        \caption{Cuts and Heuristics test for \ref{model:RU} Strip Packing with $\numobjs = 25$.}
        \label{fig:CutsTestPlot:RU25}
    \end{subfigure}
    \caption{Plots comparing each of the four configurations on the 25-object instance for \ref{model:SU} and \ref{model:RU} in the Cuts and Heuristics experiments (see Table \ref{tab:CutsTest}). We experiment with Gurobi's Cuts and Heuristics Parameters: Cuts are active where $C = 1$ and Heuristics are active where $H = 1$. Gaps and Iterations per Node are extracted from Gurobi's output log files via the gurobi-logtools \cite{gurobi_logtools} package for Python.}
    \label{fig:CutsTestPlots}
    \end{figure}

Cuts (and heuristic) actually seem to increase the number of simplex iterations spent at each node. This effect is less significant, but still present, with \ref{model:SB-L} and \ref{model:SB-M}; see Figure \ref{fig:CutsTestPlotsSB} in Appendix \ref{sec:AdditionalTablesAndPlots}.

Given the poor performance of cuts (and heuristics) on this problem, we implement a branching scheme inspired by the work of Sherali, Fraticelli, and Meller~\cite{EnhancedFormulations2003SheraliFraticelliMeller} in hopes of improving the performance of pure branch-and-bound. The position of a large object will have a more noticeable, global impact on the indicator variables than will the position of a small object. In particular, the precedence between a \textit{pair} of large objects ought to be established early in the branch-and-bound tree. For this reason, we assign each indicator $\delta_{ijs}$ a branching priority given by: 
    \begin{equation}\label{eq:BranchPriorityUnary}
    \min\{\m_{is},\m_{js}\} + \left(\max_{k\in\setobjs}\{\m_{ks}\} + 1\right)\left(\min\{\d_{is},\d_{js}\} + \left(\max_{k\in\setobjs}\{\d_{ks}\} + 1\right)\min(a_i,a_j)\right)
    \end{equation}
where $\m_{is} = \m_{is}^++m_{is}^-$ and $a_i = \d_{ix}\d_{iy}$ for each $i\in\setobjs$. This expression places emphasis, first, on physical area; followed by physical size in the relevant direction; and lastly, on the combined size of clearances in the relevant direction.

Since the indicator variables $\boldsymbol{\delta}_{ij} = (\delta_{ij},\delta_{ji})$  do not have a clear connection to the direction $s\in\{x,y\}$, we drop direction from the branching priority. Thus, both $\delta_{ij}$ and $\delta_{ji}$ are given a branching priority value according to:
    \begin{equation}\label{eq:BranchPriorityBinary}
    \min\{\m_{i},\m_{j}\} + \left(\max_{k\in\setobjs}\{\m_{k}\} + 1\right)\min(a_i,a_j)
    \end{equation}
where $\m_{i} = \m_{ix}+\m_{iy}$ and $a_i = \d_{ix}\d_{iy}$ for each $i\in\setobjs$. In either case, Gurobi is now more likely to branch on precedence indicators between pairs of large objects with large clearances. Naturally, adding a priority metric to the indicator variables does not affect the pairwise-idealness of the parent formulation.

\subsection{Computational Results}
Our second experiment involves solving the strip packing problems under each of the formulations and each variant with Gurobi's Cuts and Heuristics turned off. The results can be found in Table \ref{tab:SBSPImprove} below and Table \ref{tab:SBSPTest} in Appendix \ref{sec:AdditionalTablesAndPlots}. Notice that the binary formulation \ref{model:SB-L} and \ref{model:SB-M} see significant time improvement from both the branching priority and the sequence pair inequalities. Equally impressive is the dramatic improvement in node count offered by the Sequence Pair inequalities in each setting while usually providing comparable or improved convergence. On the other hand, the static bounds offer inconsistent performance. 

    \begin{table}[!ht]\centering
    \caption{The percent improvement each variation gives over standard in the \ref{model:SU}, \ref{model:RU}, and \ref{model:SB} formulations in terms of convergence and nodes visited. Gurobi's Cuts and Heuristics are turned off in each case.  A positive value indicates a reduction in the metric and better performance. An asterisk (*) indicates that the model converged with the given parameters where it did not under default settings, while a circle ($\circ$) indicates the inverse. ``Static'' indicates that the first two constraints of each formulation have been replaced with the static bounds $c_{is} \in [\LB_{is},\UB_{is}]$; ``Branch'' indicates that the branching priorities \eqref{eq:BranchPriorityUnary} or \eqref{eq:BranchPriorityBinary} have been applied to the binary variable; and ``Sequence'' indicates that the Sequence-Pair inequalities \ref{cuts:SPU} or \ref{cuts:SPB} have been added to the formulation. More detailed results are given in Table \ref{tab:SBSPTest} in the Appendix.}
    \label{tab:SBSPImprove}
    \setlength{\tabcolsep}{4pt}
\begin{tabular}{|cc||cc||cc||cc||cc||}
\cline{3-10}\multicolumn{1}{c}{} &       & \multicolumn{2}{c||}{SU} & \multicolumn{2}{c||}{RU} & \multicolumn{2}{c||}{SB-L} & \multicolumn{2}{c||}{SB-M} \\
\multicolumn{1}{c}{} &       & Time/Gap & Nodes & Time/Gap & Nodes & Time/Gap & Nodes & Time/Gap & Nodes \\
\hline
\multirow{4}[2]{*}{10} & Static & 0\%   & -31\% & -1022\% & -4\%  & 41\%  & 8\%   & 9\%   & -47\% \\
      & Branch & -108\% & -198\% & -23\% & -80\% & 12\%  & -70\% & 3\%   & 0\% \\
      & Sequence & -134\% & -64\% & -106\% & -28\% & -47\% & 1\%   & -75\% & -42\% \\
      & Seq \& Br & -43\% & 27\%  & -31\% & 11\%  & -32\% & -30\% & 9\%   & -8\% \\
\hline
\multirow{4}[2]{*}{15} & Static & 2\%   & -736\% & 18\%  & -34\% & 49\%  & -620\% & 29\%  & -266\% \\
      & Branch & -161\% & -65\% & $\circ$ & -67\% & 25\%  & -77\% & 14\%  & -104\% \\
      & Sequence & *     & 77\%  & 57\%  & 46\%  & *     & 100\% & *     & 99\% \\
      & Seq \& Br & 0\%   & -89\% & $\circ$ & -116\% & *     & 100\% & *     & 100\% \\
\hline
\multirow{4}[2]{*}{20} & Static & -18\% & -46\% & -22\% & 33\%  & -2\%  & -146\% & 7\%   & -223\% \\
      & Branch & -7\%  & 3\%   & 1\%   & -6\%  & 16\%  & -72\% & 5\%   & -54\% \\
      & Sequence & -10\% & 90\%  & -18\% & 85\%  & 29\%  & 39\%  & 37\%  & 46\% \\
      & Seq \& Br & 0\%   & 89\%  & 0\%   & 81\%  & 33\%  & 42\%  & 44\%  & 51\% \\
\hline
\multirow{4}[1]{*}{25} & Static & -24\% & -31\% & -23\% & 29\%  & 5\%   & -231\% & -1\%  & -257\% \\
      & Branch & -3\%  & 2\%   & 0\%   & -12\% & 20\%  & -107\% & 0\%   & -55\% \\
      & Sequence & -14\% & 91\%  & -18\% & 84\%  & 26\%  & 41\%  & 23\%  & 40\% \\
      & Seq \& Br & -7\%  & 90\%  & -9\%  & 82\%  & 28\%  & 50\%  & 35\%  & 45\% \\
\hline
\end{tabular}%

    \end{table}

    \begin{figure}[!ht]\centering
    \begin{subfigure}[b]{0.85\textwidth}\centering
        \includegraphics[width=\textwidth]{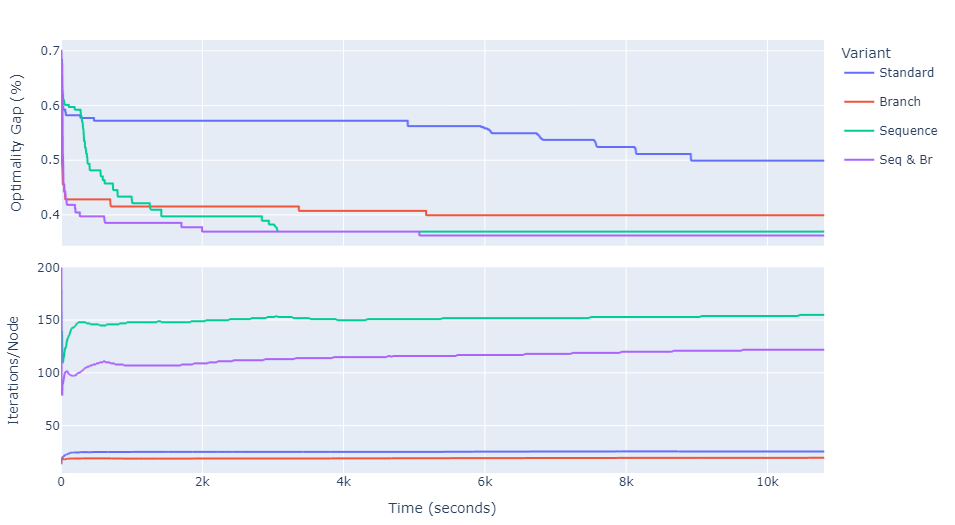}
        \caption{Sequence-Pair and Branching Priority test for \ref{model:SB-L} Strip Packing with $\numobjs = 25$.}
        \label{fig:SeqAndBranch:SBL25}
    \end{subfigure}
    \begin{subfigure}[b]{0.85\textwidth}\centering
        \includegraphics[width=\textwidth]{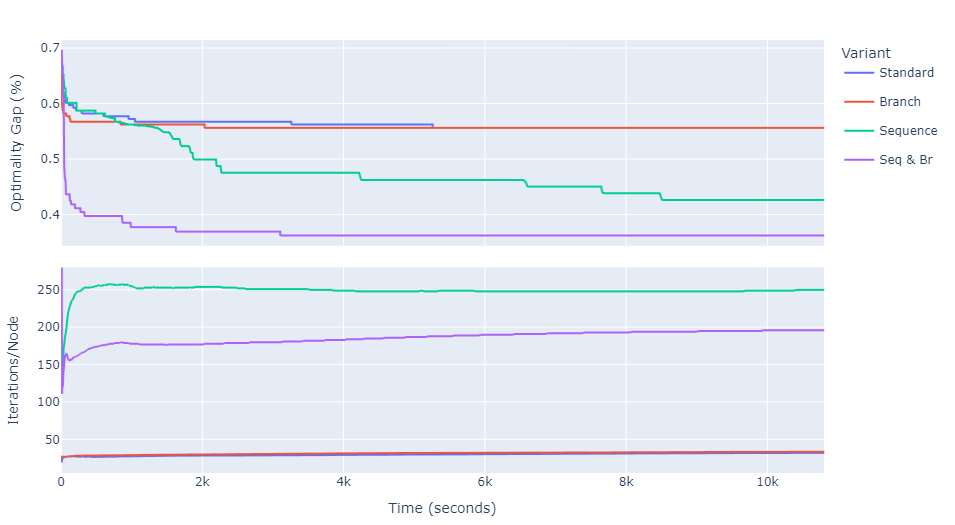}
        \caption{Sequence-Pair and Branching Priority test for \ref{model:SB-M} Strip Packing with $\numobjs = 25$.}
        \label{fig:SeqAndBranch:SBM25}
    \end{subfigure}
    \caption{Plots comparing each of the four configurations on the 25-object instance for \ref{model:SB} in the Sequence Pair and Branch Priority experiments (see Table \ref{tab:SBSPTest}). We experiment with adding symmetry-breaking inequalities \eqref{cuts:SPB} and branching priority \eqref{eq:BranchPriorityBinary}. Gaps and Iterations per Node are extracted from Gurobi's output log files via the \protect\hyperlink{https://github.com/Gurobi/gurobi-logtools}{gurobi-logtools}\cite{gurobi_logtools} package for Python.}
    \label{fig:SBSPImprovePlots}
    \end{figure}

\section{The Ideal O'Matic: Automating proofs of Idealness}\label{sec:IdealOMatic}
The analytic proof of Theorem \ref{thm:SUideal} is exceeding long while no analytic proof is known for Theorem \ref{thm:RUideal} or \ref{thm:SBMideal}; it would be nice to automate the process. Consider a general Mixed-Binary Linear Program (MBLP):
    \begin{LPArray}*[align]{Minimize}{\mathbf{c^\top\tvect{\mathbf{x}}{\mathbf{y}}}}
	& A\tvect{\mathbf{x}}{\mathbf{y}}  &\geq  &\mathbf{b}  \\
	& 	\mathbf{y}  &\in   &\{0,1\}^n
    \end{LPArray}
where $A\in\mathbb{Q}^{k\times(m+n)}$, $\mathbf{b}\in\mathbb{Q}^k$, and $\mathbf{c}\in\mathbb{Q}^{m+n}$. That is, the MBLP has $k$ constraints, $m$ continuous variables, and $n$ binary variables. 

Recall that any solution to the continuous relaxation of the MBLP that satisfies a linearly independent set of $m + n$ constraints at equality is called an extreme point. The MBPL is \textit{ideal} if and only if each such extreme point also satisfies the integer constraints. The forthcomming scheme demonstrates idealness by searching for non-integer extreme points; if no such point can be found, then the MBPL is ideal.

\subsection{An MBLP for Demonstrating MBPL Idealness} The ``integrality'' of a given solution is measured by the penalty function $\phi(y) = 1 - \big\vert 2y - 1 \big\vert$ which will take the value zero on integer solutions but positive value otherwise (see Figure \ref{fig:PenFunc}). If the relaxed MBLP has even one extreme point with even one $y$ variable for which $\phi(y) > 0$, then it is not an ideal formulation. 

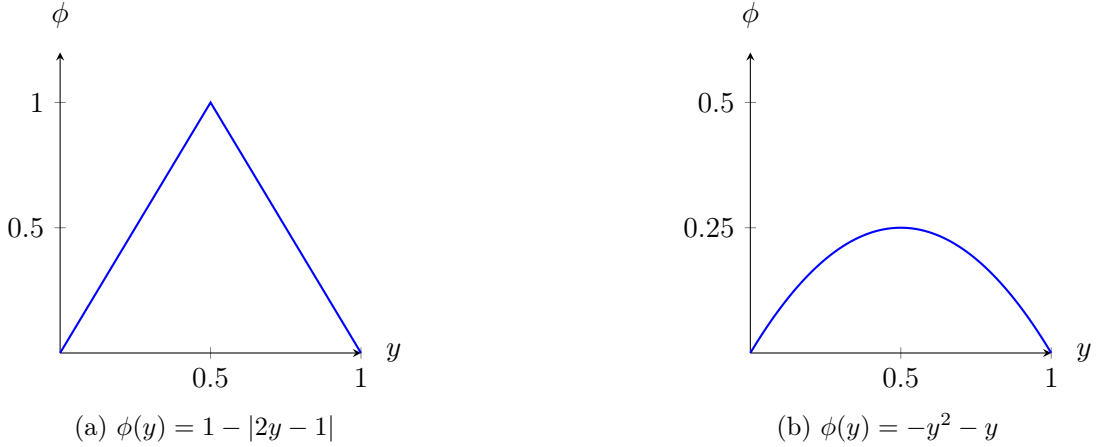
\begin{figure}[ht]
    \centering
    \begin{subfigure}[b]{0.45\textwidth}
        \centering
        \begin{tikzpicture}
		\begin{axis}[
		    axis lines=middle,
		    clip=false,
		    ymin=0, ymax=1.2,
		    xmin=0, xmax=1,
		    domain=0:1,
		    xlabel=\( y \),
		    ylabel=\( \phi \),
		    xtick={0,0.5,1},
		    ytick={0,0.5,1},
		    xticklabels={0,0.5,1},
		    yticklabels={0,0.5,1},
		    every axis x label/.style={
		        at={(ticklabel* cs:1.05)},
		        anchor=west,
		    },
		    every axis y label/.style={
		        at={(ticklabel* cs:1.05)},
		        anchor=south,
		    },
		    width=4cm, %
		    height=4cm, %
		    scale only axis, %
		]
		
		\addplot+[sharp plot, no marks, thick, blue] coordinates {(0,0) (0.5,1) (1,0)};
		
		\end{axis}
	\end{tikzpicture}
        \caption{$\phi(y) = 1 - |2y - 1|$}
        \label{PenFunc:Abs}
    \end{subfigure}
    \hfill %
    \begin{subfigure}[b]{0.45\textwidth}
        \centering
        \begin{tikzpicture}
		\begin{axis}[
		    axis lines=middle,
		    clip=false,
		    ymin=0, ymax=0.6,
		    xmin=0, xmax=1,
		    domain=0:1,
		    xlabel=\( y \),
		    ylabel=\( \phi \),
		    xtick={0,0.5,1},
		    ytick={0,0.25, 0.5},
		    xticklabels={0,0.5,1},
		    yticklabels={0,0.25, 0.5},
		    every axis x label/.style={
		        at={(ticklabel* cs:1.05)},
		        anchor=west,
		    },
		    every axis y label/.style={
		        at={(ticklabel* cs:1.05)},
		        anchor=south,
		    },
		    width=4cm, %
		    height=4cm, %
		    scale only axis, %
		]
		\addplot+[no marks, thick, blue, domain=0:1, samples=100] {-x^2 + x};
		\end{axis}
		\end{tikzpicture}
        \caption{$\phi(y) = -y^2 - y$}
        \label{PenFunc:Quad}
    \end{subfigure}
    
    \caption{Some options for the penalty function $\phi$.  We use (\subref{PenFunc:Abs}) as it can be encoded with linear constraints, whereas the quadratic version (\subref{PenFunc:Quad}) introduces unnecessary nonlinearities. }
    \label{fig:PenFunc}
	\end{figure}

The continuous relaxation of our MBLP is given by:
    \begin{center}
    \begin{minipage}{0.30\textwidth}
        \begin{LPArray}*[align]{Minimize}{\mathbf{c^\top\tvect{\mathbf{x}}{\mathbf{y}}}}
         & A\tvect{\mathbf{x}}{\mathbf{y}}  &\geq  &\mathbf{b}  \\
         &  					\mathbf{y}  &\leq  &\mathbf{0}  \\
         &  					\mathbf{y}  &\geq  &\mathbf{1}
        \end{LPArray}
    \end{minipage}
    $\qquad\rightarrow\qquad$
    \begin{minipage}{0.40\textwidth}
        \begin{subequations}\label{model:MBLP-CR}
            \begin{LPArray}[align]{Minimize}{\mathbf{c^\top\tvect{\mathbf{x}}{\mathbf{y}}}  \label{MBLP-CR:Obj}}
             & A'\tvect{\mathbf{x}}{\mathbf{y}}  &\geq  &\mathbf{b}'				 \label{MBLP-CR:Feas}.
            \end{LPArray}
        \end{subequations}
    \end{minipage}
    \end{center}
It is important to remember that bounds on the integer variables exist as they can contribute to the $m+n$ tight constraints. However, such simple inequalities are often obviously redundant to other constraints; for example, equation \eqref{SU:disj} automatically implies $\delta_{kls} \leq 1$ when composed with non-negativity from relaxing \eqref{SU:indic}. Since we will ultimately be looking for linear independence, discarding redundant constraints can improve performance considerably. For the sake of notation, we compose the non-redundant bounds on $\mathbf{y}$ into the matrix $A$, call this new matrix $A' \in \mathbb{R}^{k'\times(m+n)}$.

To disprove the existence of fractional extreme points, we maximize the penalty function $\Phi(\mathbf{y}) = \sum_{i=1}^n\phi(y_i)$ over the set of points which are feasible to the MBLP and tight to $m+n$ linearly independent constraints:

    \ParaTitle{Ideal O'Matic}{IOM}\label{model:IOM}
    \begin{NamedSubs}{IOM}\begin{LPArray}[align]{Maximize}{\Phi(\mathbf{y}) = \boldsymbol{1}^\top\boldsymbol{\phi}   \label{IOM:Obj}}
	& \boldsymbol{\phi}  &\leq  &2\mathbf{y}
            \label{IOM:Pen+}   \\
	  & \boldsymbol{\phi}  &\leq  &2 - 2\mathbf{y}
            \label{IOM:Pen-}   \\
	  & A'\tvect{\mathbf{x}}{\mathbf{y}}  &\geq  &\mathbf{b}'
            \label{IOM:Feas}   \\
	  & A'\tvect{\mathbf{x}}{\mathbf{y}}  &\leq  &\mathbf{b}' + M(1-\boldsymbol{\eta})
            \label{IOM:Tight}  \\
	  &	\sum_{i=1}^k\eta_i  &=     &m + n
            \label{IOM:Extrm}  \\
	  &	\sum_{i \in I} \eta_i &\leq  & |I|-1     
	       & \forall\,I \in \mathcal{I}   \label{IOM:LinDep} \\
	  &\boldsymbol{\eta}  &\in   &\{0,1\}^{k'} 
            \label{IOM:EtaBin}
    \end{LPArray}\end{NamedSubs}
where $\mathcal I$ is the collection of subsets of $[\![k]\!]$ that correspond to linearly dependent sets of rows from $(A'\vert\mathbf{b}')$. Notice that constraint \eqref{IOM:Feas} is exactly \eqref{MBLP-CR:Feas} to ensure feasibility to the continuous relaxation. Constraint \eqref{IOM:Tight} is a mirror of \eqref{IOM:Feas} with the inequality swapped and a big-M term to allow its deactivation; notice that if $\eta_i = 1$, then the $i^{\text{th}}$ inequality in \eqref{IOM:Feas} must be tight.  Finally, constraint \eqref{IOM:Extrm} ensures that the required $m + n$ constraints are tight while \eqref{IOM:LinDep} maintains that the tight constraints are linearly independent; ie. that they properly correspond to an extreme point.

\begin{proposition}\label{prop:IOMshowsIdeal}
The optimal objective value of \ref{model:IOM} is zero if and only if the MBPL embedded in constraint \eqref{IOM:Feas} is ideal.
\end{proposition}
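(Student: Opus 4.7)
The plan is to unpack (IOM) as an explicit search for a fractional extreme point of the continuous relaxation of the MBLP, and then argue each direction of the biconditional separately. The key observation is that the auxiliary variables $\boldsymbol{\phi}$ and $\boldsymbol{\eta}$ act purely as certificates: constraints \eqref{IOM:Pen+}--\eqref{IOM:Pen-} force $\phi_i \le \min(2y_i,\,2-2y_i) = 1 - |2y_i-1|$, which is nonpositive for $y_i\in\{0,1\}$ and strictly positive for $y_i\in(0,1)$ (using that $A'$ contains the non-redundant bounds $0\le y_i\le 1$); and constraints \eqref{IOM:Tight}--\eqref{IOM:LinDep}, together with the integrality \eqref{IOM:EtaBin}, enforce that the $\boldsymbol{\eta}$-selected subset of rows of $A'$ is both tight at $(\mathbf{x},\mathbf{y})$ and linearly independent, making $(\mathbf{x},\mathbf{y})$ an extreme point of \eqref{MBLP-CR:Feas}.

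For the $(\Leftarrow)$ direction, I would take any feasible $(\mathbf{x}^*,\mathbf{y}^*,\boldsymbol{\phi}^*,\boldsymbol{\eta}^*)$ to \ref{model:IOM}. Constraint \eqref{IOM:Feas} makes $(\mathbf{x}^*,\mathbf{y}^*)$ relaxation-feasible; constraints \eqref{IOM:Tight}, \eqref{IOM:Extrm}, \eqref{IOM:LinDep} identify $m+n$ tight linearly independent rows of $A'$, so $(\mathbf{x}^*,\mathbf{y}^*)$ is an extreme point. By hypothesis (idealness), $\mathbf{y}^*\in\{0,1\}^n$, and then $\phi_i^* \le \min(2y_i^*,\,2-2y_i^*) = 0$ for every $i$, so the objective $\boldsymbol{1}^\top\boldsymbol{\phi}^*$ is at most $0$. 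Attaining $0$ is easy: pick any integer extreme point and set $\boldsymbol{\phi}=\mathbf{0}$.

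For the $(\Rightarrow)$ direction, I would argue the contrapositive. Suppose the MBLP is not ideal, so the relaxation has a fractional extreme point $(\mathbf{x}^*,\mathbf{y}^*)$, i.e.\ some $y_j^*\in(0,1)$, and there is a set $T\subseteq[\![k']\!]$ of $m+n$ linearly independent rows of $A'$ that are tight at $(\mathbf{x}^*,\mathbf{y}^*)$. Define $\eta_i^*=1$ for $i\in T$ and $0$ otherwise, and set $\phi_i^* = \min(2y_i^*,\,2-2y_i^*)$. This is feasible to \ref{model:IOM}: \eqref{IOM:Pen+}--\eqref{IOM:Pen-} hold by construction of $\boldsymbol{\phi}^*$; \eqref{IOM:Feas} holds since $(\mathbf{x}^*,\mathbf{y}^*)$ is relaxation-feasible; \eqref{IOM:Tight} holds because the tight constraints give $A'_i\tvect{\mathbf{x}^*}{\mathbf{y}^*}=b'_i$ for $i\in T$, and $M$ is large enough to make the constraint vacuous for $i\notin T$; \eqref{IOM:Extrm} and \eqref{IOM:LinDep} hold because $T$ has cardinality $m+n$ and is linearly independent. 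Since $\phi_j^* > 0$, the objective is strictly positive, so the optimum of \ref{model:IOM} is not $0$.

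The main obstacle is handling the encoding of linear independence cleanly. Constraint \eqref{IOM:LinDep} enumerates all linearly dependent subsets $\mathcal I$ of rows of $(A'\mid \mathbf{b}')$; I need to verify that $\sum_{i\in I}\eta_i\le |I|-1$ for every $I\in\mathcal I$ is exactly equivalent to the support of $\boldsymbol{\eta}^*$ indexing a linearly independent set. The forward implication is immediate (a linearly independent set cannot contain any dependent subset). The reverse requires noting that if the support were dependent, it itself would lie in $\mathcal I$ and violate the inequality. A minor technical point is the choice of $M$ in \eqref{IOM:Tight}: any $M$ that bounds the slacks $A'_i\tvect{\mathbf{x}}{\mathbf{y}} - b'_i$ over the relaxation suffices, and for the formulations considered this follows from the static bounds on $(\mathbf{x},\mathbf{y})$ inside $A'$.
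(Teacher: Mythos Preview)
Your proposal is correct and follows the same idea as the paper's own argument, which is essentially a one-line ``follows from the construction of \ref{model:IOM}''; you have simply unpacked that construction in full, arguing both directions explicitly and verifying that \eqref{IOM:LinDep} encodes exactly linear independence of the $\boldsymbol{\eta}$-selected rows. One very minor point: the paper defines $\mathcal{I}$ via linearly dependent rows of the \emph{augmented} matrix $(A'\mid\mathbf{b}')$, whereas you phrase it in terms of rows of $A'$; but since the rows you select are all tight at $(\mathbf{x}^*,\mathbf{y}^*)$, dependence of the $A'$-rows and of the $(A'\mid\mathbf{b}')$-rows coincide, so this makes no difference.
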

\begin{corollary}\label{cor:IOMsubset}
The MBLP is ideal if and only if there exists a subset $\mathcal{J}$ of $\mathcal{I}$ for which the optimal objective value of \ref{model:IOM} is zero when constraint \eqref{IOM:LinDep} is implemented only for sets $I \in \mathcal{J}$.
\end{corollary}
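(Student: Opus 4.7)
The plan is to reduce the statement directly to Proposition~\ref{prop:IOMshowsIdeal} via a straightforward monotonicity argument. Specifically, dropping any element of $\mathcal{I}$ from the family used in constraint \eqref{IOM:LinDep} only relaxes \ref{model:IOM}, so the feasible region under any subfamily $\mathcal{J} \subseteq \mathcal{I}$ contains the feasible region under the full $\mathcal{I}$. Maximizing the same objective $\Phi$ over a larger set can only yield a larger (or equal) optimum, which drives both directions of the equivalence.

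For the forward direction, assume the MBLP is ideal. Proposition~\ref{prop:IOMshowsIdeal} gives that the optimum of the full \ref{model:IOM} is zero, so taking $\mathcal{J} = \mathcal{I}$ immediately witnesses the existential claim.

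For the backward direction, assume some $\mathcal{J} \subseteq \mathcal{I}$ produces optimum zero in the relaxed program. By the monotonicity observation, the optimum of the full \ref{model:IOM} is no greater than zero, and since $\Phi$ is a sum of penalty terms that are non-negative at any optimal point corresponding to an extreme point of the MBLP relaxation, the full optimum is exactly zero. Proposition~\ref{prop:IOMshowsIdeal} then delivers idealness of the MBLP.

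There is no deep obstacle here; the corollary is a clean consequence of set containment combined with Proposition~\ref{prop:IOMshowsIdeal}. Its significance is computational rather than theoretical: it justifies a lazy-cut scheme in which the combinatorially expensive constraints \eqref{IOM:LinDep} are generated on demand as linearly dependent $\boldsymbol{\eta}$-supports are encountered during solution, rather than enumerating the entire family $\mathcal{I}$ up front.
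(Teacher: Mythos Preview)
Your proposal is correct and follows essentially the same route as the paper: a monotonicity argument showing $\mathcal{P} \subseteq \mathcal{P}_{\mathcal{J}}$, hence the full optimum is bounded above by the subset optimum, combined with nonnegativity of $\Phi$ on $[0,1]^n$ to pin the full optimum at zero, then invoking Proposition~\ref{prop:IOMshowsIdeal}. Your explicit handling of the forward direction by taking $\mathcal{J}=\mathcal{I}$ is a small addition the paper leaves implicit.
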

\begin{proof}
Proposition \ref{prop:IOMshowsIdeal} follows from the construction of \ref{model:IOM}: if \ref{model:IOM} returns an optimal value of zero, then the tight-intersection of any collection of $m+n$ linearly dependent constraints of the MBLP has no fractional $y$. That is, none of the MBLP's extreme points are fractional.

Let $\mathcal{P}$ represent the feasible region of \ref{model:IOM} and, for any subset $\mathcal{J}$ of $\mathcal{I}$, let $\mathcal{P}_\mathcal{J}$ represent its feasible region when constraint \eqref{IOM:LinDep} is implemented only for sets $I \in \mathcal{J}$. Notice that $\mathcal{P} \subseteq \mathcal{P}_\mathcal{J}$ since $\mathcal{P}_\mathcal{J}$ is defined by a subset of the constraints defining $\mathcal{P}$. Naturally, then, 
    $$
    \{\Phi(\mathbf{y}):(\mathbf{x},\mathbf{y},\boldsymbol{\eta})\in\mathcal{P}\} 
    \hquad\subseteq\hquad
    \{\Phi(\mathbf{y}):(\mathbf{x},\mathbf{y},\boldsymbol{\eta})\in\mathcal{P}_\mathcal{J}\}
    $$
which means that 
    $$
    \max\{\Phi(\mathbf{y}):(\mathbf{x},\mathbf{y},\boldsymbol{\eta})\in\mathcal{P}\} 
    \LEQ
    \max\{\Phi(\mathbf{y}):(\mathbf{x},\mathbf{y},\boldsymbol{\eta})\in\mathcal{P}_\mathcal{J}\}.
    $$
for any subset $\mathcal{J}$ of $\mathcal{I}$. But $\Phi(\mathbf{y}) \geq 0$ because $\mathbf{y} \in [0,1]^n$, so it must be that $\max\{\Phi(\mathbf{y}):(\mathbf{x},\mathbf{y},\boldsymbol{\eta})\in\mathcal{P}\} = 0$ if there exists a subset $\mathcal{J}$ of $\mathcal{I}$ for which $\max\{\Phi(\mathbf{y}):(\mathbf{x},\mathbf{y},\boldsymbol{\eta})\in\mathcal{P}_\mathcal{J}\} = 0$.
\end{proof}

\subsection{Separation Problem for Dependence Covers \ref*{IOM:LinDep}}
Often, only a small percentage of the elements of $\mathcal{I}$ correspond to useful constraints. In such cases, it is more effective to implement \textit{dependence covers} \eqref{IOM:LinDep} as the need arises; that is, as solutions which are tight to a linearly dependent set of rows of $(A'\vert\mathbf{b}')$ are identified. Let $(\mathbf{x}^*, \mathbf{y}^*, \boldsymbol{\eta}^*)$ be an optimal solution to an instance of \ref{model:IOM} with an under-full dependence set $\mathcal{I}$ and let $T = \big\{i \in [\![k']\!]\ :\ \eta^*_i = 1\big\}$. Define the matrix $(A_T|\mathbf{b}_T) \in \mathbb{R}^{(m+n)\times(m+n+1)}$ to be made up of the rows of $(A'|\mathbf{b}')$ which correspond to the elements of $T$ so that  $A_T\tvect{\mathbf{x^*}}{\mathbf{y^*}} = \mathbf{b}_T$.

If $\text{rank}(A_T|\mathbf{b}_T) = |T| = m+n$, then $(\mathbf{x}^*,\mathbf{y}^*)$ is an extreme point of the MBLP and no further action is required. Suppose, on the other hand, that $A_T$ is rank-deficient. In this case, the point $(\mathbf{x}^*,\mathbf{y}^*)$ may not be extreme and the corresponding indicator vector $\boldsymbol{\eta}^*$ should be removed from the feasible region of \eqref{model:IOM}. It is sufficient to append the collection $T$ of tight constraints directly to $\mathcal{I}$ and repeat the optimization. However, $T$ may include some unnecessary indices which will reduce the strength of the corresponding cover inequality \eqref{IOM:LinDep}. A better approach is to identify a minimal linearly dependent subset (or \textit{circuit}) of $T$ by solving the following minimization problem:
    \begin{equation}\label{seSU1d:lbjibstract}
    \min\big\{\|\mathbf{p}\|_0\ :\ (A_T|\mathbf{b}_T)^\top\mathbf{p} = \mathbf{0},\ \mathbf{p} \in \mathbb{R}^{|T|}\setminus \{\mathbf{0}\}\big\}.
    \end{equation}
The optimal value of this problem is sometimes called the \textit{spark} of matrix $(A_T|\mathbf{b}_T)^\top$; indices with a nonzero optimal multiplier $p_i$ correspond to elements of the minimal circuit. As demonstrated in \cite{RestrictedIsometry2014TillmannPfetsch}, finding the spark of a matrix is generally NP-Hard, so this problem is often approximated by substituting the $\ell_0$-norm for a $\ell_1$-norm. Sometimes called basis-pursuit, this approximation can be solved as an LP \cite{AtomicDecompositionBasisPirsuit2001ChenDonohoSaunders}. There are known conditions under which the $\ell_0$ and $\ell_1$ solutions coincide \cite{SufficientConditionsSparseSignalRecovery2011JuditskyNemirovski} and a fast solution to this problem may be important for finding dependence covers on larger binary programs. However, the following MBLP implemented in Gurobi gave nice, integer solutions and converged quickly on our small binary programs. It performed well as a separation kernel.
    \begin{subequations}\label{sep:IOMg}
    \begin{LPArray}{Minimize}{\sum_{i\in T} \mu_i + \nu_i}
    & (A_T|\mathbf{b}_T)^\top\mathbf{p} = \mathbf{0} \\
    & 1-(M+1)(1-\boldsymbol{\mu})  \LEQ  \mathbf{p}  \LEQ  M\boldsymbol{\mu} \\
    & -M\boldsymbol{\nu}  \LEQ  \mathbf{p}  \LEQ  (M+1)(1-\boldsymbol{\nu})-1 \\
    & \boldsymbol{\mu},\boldsymbol{\nu}  \IN  \{0,1\}^{|T|} \\
    & \mathbf{p}  \IN  \mathbb{R}^{|T|}.
    \end{LPArray}
    \end{subequations}

 \noindent We have $\mu_i = 1$ if $p_i$ is positive and $\nu_i = 1$ if it is negative. We also ensure that active $\mathbf{p}$ multipliers take an absolute value greater than one to prevent numerical errors and promote integer multipliers. 

 \begin{remark}
We can choose $M$ based on the encoding size of the problem.  
According to \cite[Theorem 10.2]{Schrijver1998-dd}, for a rational polyhedron $P = \{x \in \mathbb{R}^n : Ax \leq b\}$, 
vertex complexity $\nu$ (the binary encoding size of any vertex or extreme ray of $P$), is bounded by $4n^2 \varphi$, where $\varphi$ is the facet complexity of $\nu$, i.e., the binary encoding size of the constraints.  

Let $\bar x$ be any solution to $Ax = 0$.  Next, $A'x \leq b'$ be a modification of $Ax = 0$ such that $x_i \leq -1$ whenever $\bar x_i < 0$, $x_i \geq 1$ whenever $\bar x_i > 0$.  Then there exists a scaling of  $\bar x$ that is valid for $A'x \leq b'$.  The facet complexity of $A'x \leq b'$ is then bounded by $(m+n)n (\log(|A_{max}|)+2)$, assuming that $|A_{max}| \geq 1$.  Thus, any vertex is bounded in infinity norm by 
$$
M= 2^{4n^2 \nu} = 2^{4n^3(m+n)\log(|A_{max}| + 2)}.
$$
\end{remark}

There also exist MILP formulations of this problem which do not require big-$M$ constants; see \cite{Tillmann2019}. We opted to use the Big-$M$ formulation \eqref{sep:IOMg} since, when applied to our small binary programs, it quickly converges to integer multipliers which where helpful in identifying and demonstrating the general dependencies given in Lemmas \ref{lem:SUCoversNecessary}, \ref{lem:RUCoversNecessary}, \ref{lem:SBMCoversNecessary1}, and \ref{lem:SBMCoversNecessary2}.

Regardless of the approach used to find it, let $\mathbf{p}^*$ be an optimal (or nearly optimal) solution to \eqref{seSU1d:lbjibstract} so that $T' = \{i\in T\ :\ p_i^*\neq0\}$ represents a minimal row-circuit of $(A_T|\mathbf{b}_T)$ which, upon being added to $\mathcal{I}$ and used in constraint \eqref{IOM:LinDep}, represents a strong valid inequality which separates $\boldsymbol{\eta}^*$.

\textbf{Remark:} Any \textit{equality} constraints in the original MBLP are necessarily tight and can be safely discarded from constraint \eqref{IOM:Tight} so long as their number is also subtracted from the right side of \eqref{IOM:Extrm}. However, it is important to re-introduce any discarded equalities into $(A_T|\mathbf{b}_T)$ during the separation problem as they can contribute to linear dependence. It is perfectly valid to split any equality constraints $D\tvect{\mathbf{x}}{\mathbf{y}} = \mathbf{d}$ into inequalities $D\tvect{\mathbf{x}}{\mathbf{y}} \leq \mathbf{d}$ and $-D\tvect{\mathbf{x}}{\mathbf{y}} \leq -\mathbf{d}$, but this might slow things down. Our implementation discards and re-introduces equality constraints.

\subsection{Parametric Implementation}
In its linear form, \ref{model:IOM} can only tell us whether the MBLP is ideal for a given set of input data. Showing more general idealness requires letting the input data ($A$ and $\mathbf{b}$) vary within the framework of \ref{model:IOM}. Such a parametric implementation is a Mixed-Binary Quadratically Constrained Program that actively searches for an instance of the MBLP that has a fractional extreme point. If the optimal objective value of this parametric implementation is zero, then the MBLP is ideal for any input data.

A parametric implementation may be tractable if a small but comprehensive subset of $\mathcal{I}$ can be identified for a given MBLP. 

\begin{remark}
A data-dependent implementation can be used to solve a separation problem for the parametric implementation if a comprehensive dependence set $\mathcal{I}$ is not known. However, any identified dependence covers should be evaluated to verify whether they are indeed dependent in general.
\end{remark}

\section{Computer-aided Proofs}\label{sec:compproofs}
In this section, we use \ref{model:IOM} to demonstrate the idealness of the various formulations or else to find non-ideal examples of them. See our GitHub Repository \hyperlink{https://github.com/jfravel/Ideal-O-Matic}{https://github.com/jfravel/Ideal-O-Matic} for the Python, model, log, and solution files supporting this result.

\subsection{\ref{model:SU} is Pairwise-Ideal (Computer-aided proof of Theorem~\ref{thm:SUideal})}\label{sec:SUideal}
Consider the continuous relaxation of \ref{model:SU}:

    \ParaTitle{Continuous Relaxation of \ref{model:SU}}{rSU}\label{model:rSU}
    \begin{NamedSubs}{rSU}\begin{FeasRegion}
    c_{ls}  \GEQ  \LB_{ls} + (\LB_{ks} + \PM_{kls} - \LB_{ls})\delta_{kls}	 
        & \forall\ (k,l,s)\in\combs_{ij} \label{rSU:lb}   \\
    c_{ks}  \LEQ  \UB_{ks} + (\UB_{ls} - \PM_{kls} - \UB_{ks})\delta_{kls}	 
        & \forall\ (k,l,s)\in\combs_{ij} \label{rSU:ub}   \\
    c_{ks} - c_{ls}  \LEQ  \UB_{ks} - \LB_{ls} + (\LB_{ls} - \PM_{kls} - \UB_{ks})\delta_{kls}   
        & \forall\ (k,l,s)\in\combs_{ij} \label{rSU:prec} \\
    \delta_{kls}  \GEQ  0   
        & \forall\ (k,l,s)\in\combs_{ij} \label{rSU:indic} \\
    \delta_{ijx} + \delta_{jix} + \delta_{ijy} + \delta_{jiy}  \EQ  1 	\label{rSU:disj}
    \end{FeasRegion}\end{NamedSubs}

\begin{lemma}\label{lem:SUCoversNecessary}
    The collection of constraints $\inst{rSU:lb}{}$, $\inst{rSU:ub}{}$, $\inst{rSU:prec}{}$, and $\inst{rSU:indic}{}$ is linearly dependent for any given index $(k,l,s)\in\combs_{ij}$.
\end{lemma}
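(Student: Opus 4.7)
The plan is to exhibit an explicit nontrivial linear combination of the four rows corresponding to \eqref{rSU:lb}, \eqref{rSU:ub}, \eqref{rSU:prec}, and \eqref{rSU:indic} that vanishes as a vector in the coefficient-and-right-hand-side space. First I would rewrite each inequality in the uniform form $\alpha^\top [\mathbf{c},\boldsymbol{\delta}] \geq \beta$, which amounts to multiplying the $\leq$ constraints \eqref{rSU:ub} and \eqref{rSU:prec} through by $-1$. Then I would tabulate the coefficients of $c_{ks}$, $c_{ls}$, $\delta_{kls}$ and the right-hand side for each row side-by-side.

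A clean cancellation pattern should emerge in the $c$-columns: the entries $(0,1)$ from \eqref{rSU:lb} and $(-1,0)$ from \eqref{rSU:ub} sum to $(-1,1)$, which matches the $(c_{ks},c_{ls})$ coefficients of \eqref{rSU:prec}. The right-hand sides align too, since $\LB_{ls} + (-\UB_{ks}) = \LB_{ls} - \UB_{ks}$ is precisely the right-hand side of \eqref{rSU:prec} after the sign flip. Consequently, the row combination \eqref{rSU:prec} $-$ \eqref{rSU:lb} $-$ \eqref{rSU:ub} is supported only in the $\delta_{kls}$ column.

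A short simplification of the residual $\delta_{kls}$ coefficient will collapse to $\LB_{ks} + \PM_{kls} - \UB_{ls}$, which is exactly absorbed by the appropriate scalar multiple of the row $[0,0,1\mid 0]$ from \eqref{rSU:indic}. This yields the identity
$$
\eqref{rSU:prec} \;-\; \eqref{rSU:lb} \;-\; \eqref{rSU:ub} \;-\; (\LB_{ks} + \PM_{kls} - \UB_{ls})\cdot\eqref{rSU:indic} \;=\; \mathbf{0}.
$$
Because the coefficients on \eqref{rSU:lb}, \eqref{rSU:ub}, and \eqref{rSU:prec} are each $\pm 1$, the combination is nontrivial for every choice of parameters, regardless of whether $\LB_{ks} + \PM_{kls} - \UB_{ls}$ vanishes. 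There is no genuine mathematical obstacle here; the only real care needed is the sign bookkeeping when converting the $\leq$ rows to the standard $\geq$ convention. The payoff of stating this identity explicitly is that it gives the first of the ``necessary covers'' the Ideal O'Matic will later need in constraint \eqref{IOM:LinDep} when proving Theorem~\ref{thm:SUideal}.
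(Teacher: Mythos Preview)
Your proposal is correct and takes essentially the same approach as the paper: both exhibit the explicit linear combination $\eqref{rSU:prec}-\eqref{rSU:ub}+\eqref{rSU:lb}$ (equivalently, your $\eqref{rSU:prec}-\eqref{rSU:lb}-\eqref{rSU:ub}$ after the sign flip to $\geq$ form), observe that it is supported only in the $\delta_{kls}$ column with residual coefficient $\PM_{kls}-\UB_{ls}+\LB_{ks}$, and absorb that residual with the row \eqref{rSU:indic}. The paper's main-text proof is terser---it simply sets $\delta_{kls}=0$ via tightness of \eqref{rSU:indic} and notes the remaining three reduce to an obvious dependence---but its appendix version carries out exactly your computation.
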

\begin{proof}
    Assuming that \eqref{rSU:indic} is tight, then $\delta_{kls} = 0$ and the remaining constraints reduce to $c_{ls}  \geq  \LB_{ls}$, $c_{ks}  \leq  \UB_{ks}$, and $c_{ks} - c_{ls}  \leq  \UB_{ks} - \LB_{ls}$ which are linearly dependent when tight.
\end{proof}

These collections of constraints are sufficient to demonstrate the pairwise idealness of \ref{model:SU} via a parametric implementation of \ref{model:IOM}. Note that it is possible to prove minimal linear dependence on the above collections of constraints under the assumption that $\PM_{kls}  \neq  \UB_{ls} - \LB_{ks}$ for each index $(k,l,s)\in\pairs_{ij}$; this stronger lemma and its proof appear in Appendix \ref{app:StongLemmas} since they are unnecessary for the bulk of this article. 

\begin{proof}[Computational Proof of Theorem \ref{thm:SUideal}] [\underline{Up to $\epsilon = \frac{r}{10}$}]
We begin by implementing \ref{model:IOM} on \ref{model:rSU} with:
    \begin{itemize}
    \item Region size $r = 10$, a big-$M$ value of $r$, and a small constant $\epsilon = 1$;
    \item Parametric input data ($\LB$, $\UB$, and $\PM$)
    \item Under the constraint that $\PM_{kls}  \leq  \UB_{ls} - \LB_{ks} - \epsilon$ for each index $(k,l,s)\in\pairs_{ij}$; and
    \item \ref{IOM:LinDep} implemented only on the collections of constraints identified in Lemma \ref{lem:SUCoversNecessary}.
    \end{itemize}
Setting $r = 10$ is done w.l.o.g. because any non-ideal values of the parameters can be scaled. This value is also sufficiently large for the big-$M$ terms. Letting Gurobi evaluate this optimization problem with parameters \verb|NonConvex=2|, \verb|NumericFocus=3|, \verb|FeasibilityTol=1e-9|, and \verb|IntFeasTol=1e-5| gives an optimal value of \verb|6.89e-5| (which is less than $n$ times the integer-feasibility-tolerance \verb|IntFeasTol|) in about 36 seconds. This value is effectively zero, so the result holds by Corollary \ref{cor:IOMsubset}. 
\end{proof}

\subsection{\ref{model:RU} is Pairwise-Ideal (Computer-aided proof of Theorem~\ref{thm:RUideal}}\label{sec:RUideal}
Consider the continuous relaxation of \ref{model:RU}:

    \ParaTitle{Continuous Relaxation of \ref{model:RU}}{rRU}\label{model:rRU}
    \begin{NamedSubs}{rRU}\begin{FeasRegion}
    c_{ls}  \GEQ  \LB_{ls} + (\LB_{ks} + \PM_{kls} - \LB_{ls})\delta_{kls}	
        & \forall\ (k,l,s)\in\combs_{ij} 	\label{RUR:lb}   \\
    c_{ks}  \LEQ  \UB_{ks} + (\UB_{ls} - \PM_{kls} - \UB_{ks})\delta_{kls}	
        & \forall\ (k,l,s)\in\combs_{ij}  \label{RUR:ub}   \vspace{.5em}\\
    \begin{aligned}
    c_{kx} - c_{lx}  \LEQ  \PM_{lks} &- (\PM_{lks} + \PM_{kls})\delta_{kls} \\
        &\quad + (\UB_{ks} - \PM_{lks} - \LB_{ls})\delta_{lks}
    \end{aligned}
        & \forall\ (k,l,s)\in\combs_{ij} 	\label{RUR:prec} \vspace{.5em}\\
    \delta_{kls}  \GEQ  0 
        & \forall\ (k,l,s)\in\combs_{ij}  \label{RUR:indic} \\
    \delta_{ijs} + \delta_{jis}  \LEQ  1 
        & \forall\ s \in \{x,y\}	\label{RUR:tight} \\
    \delta_{ijx} + \delta_{jix} + \delta_{ijy} + \delta_{jiy}  \GEQ  1 
        \label{RUR:disj}.
    \end{FeasRegion}\end{NamedSubs}

In the following lemma and its proof, we use the term \textit{instance} in reference to a constraint as it applies to a specific index. For example, the $(i,j,x)$ instance of \eqref{RUR:lb}, denoted $\inst{RUR:lb}{ijx}$, is given by
	$$
	c_{jx}  \GEQ  \LB_{jx} + (\LB_{ix} + \PM_{ijx} - \LB_{jx})\delta_{ijx}
	$$
\begin{lemma}\label{lem:RUCoversNecessary}
    The following collections of constraints are linearly dependent for all indices $(k,l,s) \in J$ when assumed tight:
    \begin{enumerate}[label=(\roman*)]
    \item $\inst{RUR:disj}{}$, $\inst{RUR:tight}{s}$, $\inst{RUR:indic}{kls'}$, and $\inst{RUR:indic}{lks'}$ where $s'\in\{x,y\}\setminus s$;
    
    \item $\inst{RUR:lb}{kls}$, $\inst{RUR:lb}{lks}$, $\inst{RUR:prec}{kls}$, $\inst{RUR:tight}s$, and $\inst{RUR:indic}{lks}$;
    
    \item $\inst{RUR:lb}{kls}$, $\inst{RUR:ub}{kls}$, $\inst{RUR:prec}{kls}$, $\inst{RUR:tight}{s}$, and $\inst{RUR:indic}{kls}$; and
            
    \item $\inst{RUR:ub}{kls}$, $\inst{RUR:ub}{lks}$, $,\inst{RUR:prec}{kls}$, $\inst{RUR:tight}{s}$, and $\inst{RUR:indic}{lks}$.
    \end{enumerate}
\end{lemma}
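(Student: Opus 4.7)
My plan is to handle each of the four cases (i)--(iv) by assuming tightness of the listed constraints, using the tight indicator inequalities to pin down the values of the relevant $\delta$ variables, and then verifying that the remaining constraints (which only involve $\mathbf{c}$-variables once $\boldsymbol{\delta}$ is fixed) collapse to a linearly dependent system. The pattern mirrors the proof of Lemma \ref{lem:SUCoversNecessary}: tight indicator constraints turn the parametric right-hand sides in \eqref{RUR:lb}--\eqref{RUR:prec} into constants, and then two tight equalities on $\mathbf{c}$-variables imply the third.

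For case (i), tight $\inst{RUR:indic}{kls'}$ and $\inst{RUR:indic}{lks'}$ set $\delta_{kls'} = \delta_{lks'} = 0$, so that $\inst{RUR:disj}{}$ becomes $\delta_{ijs} + \delta_{jis} \geq 1$ while $\inst{RUR:tight}{s}$ gives $\delta_{ijs} + \delta_{jis} \leq 1$; the two therefore coincide when both are tight, which is an immediate linear dependence.

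For the remaining cases, first I would pin down the $\boldsymbol{\delta}$-values. In case (ii), tight $\inst{RUR:indic}{lks}$ gives $\delta_{lks} = 0$ and tight $\inst{RUR:tight}{s}$ then forces $\delta_{kls} = 1$; substituting into $\inst{RUR:lb}{kls}$, $\inst{RUR:lb}{lks}$, and $\inst{RUR:prec}{kls}$ reduces them (at equality) to $c_{ls} = \LB_{ks} + \PM_{kls}$, $c_{ks} = \LB_{ks}$, and $c_{ks} - c_{ls} = -\PM_{kls}$, and the third equation is the first minus the second. Case (iv) is symmetric with the roles of upper and lower bounds swapped: $\delta_{lks} = 0$, $\delta_{kls} = 1$, and the $\mathbf{c}$-equalities become $c_{ks} = \UB_{ls} - \PM_{kls}$, $c_{ls} = \UB_{ls}$, and $c_{ks} - c_{ls} = -\PM_{kls}$, which again form a dependent triple. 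Case (iii) is the complementary setting: tight $\inst{RUR:indic}{kls}$ together with tight $\inst{RUR:tight}{s}$ yields $\delta_{kls} = 0$ and $\delta_{lks} = 1$, after which $\inst{RUR:lb}{kls}$, $\inst{RUR:ub}{kls}$, and $\inst{RUR:prec}{kls}$ collapse to $c_{ls} = \LB_{ls}$, $c_{ks} = \UB_{ks}$, and $c_{ks} - c_{ls} = \UB_{ks} - \LB_{ls}$, which is again a linearly dependent trio.

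The main bookkeeping hurdle will be tracking the substitutions in $\inst{RUR:prec}{kls}$, whose right-hand side carries both $\delta_{kls}$ and $\delta_{lks}$ coefficients and thus takes three different forms across the cases. Once those substitutions are carried out cleanly, the claimed dependence reduces in each case to an identity among three affine equations in $c_{ks}, c_{ls}$ (or a pair of identical inequalities in case (i)), and writing out the explicit multipliers for each subset in $\mathcal{I}$ is then a short verification. Notably, the argument does not require any parameter gap such as $\PM_{kls} < \UB_{ls} - \LB_{ks}$, so the lemma holds unconditionally; strict minimality of the circuits, analogous to the strengthening in Appendix \ref{app:StongLemmas}, would require such a hypothesis but is not needed here.
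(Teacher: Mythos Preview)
Your proposal is correct and follows essentially the same approach as the paper: in each case you use the tight indicator and tight $\inst{RUR:tight}{s}$ constraints to pin down the relevant $\delta$-values, then observe that the remaining three $\mathbf{c}$-constraints collapse to a linearly dependent triple (and case (i) is a direct four-term linear combination summing to zero). The only cosmetic slip is in case (ii), where the third equation is actually the second minus the first rather than the first minus the second, but the dependence claim is unaffected.
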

\begin{proof}We demonstrate the linear dependence of each collection by assuming tightness and considering a linear combination of the constraints:
    \begin{enumerate}[label=\emph{(\roman*)}]
    \item Assuming tightness, the combination $\inst{RUR:disj}{} - \inst{RUR:tight}{s} -  \inst{RUR:indic}{kls'} - \inst{RUR:indic}{lks'}$ reduces to $0 = 0$.
    
    \item Assuming tightness on $\inst{RUR:tight}{s}$ and $\inst{RUR:indic}{lks}$ gives $\delta_{lks} = 0$ and $\delta_{kls} = 1$ and the remaining constraints reduce to $c_{ls}  \geq  \LB_{ks} + \PM_{kls}$,\ $c_{ks}  \geq  \LB_{ks}$, and $c_{ls} - c_{ks}  \geq  \PM_{kls}$ which are linearly dependent when assumed tight.
    
    \item Similarly, assuming tightness on $\inst{RUR:tight}{s}$ and $\inst{RUR:indic}{kls}$ gives $\delta_{lks} = 1$ and $\delta_{kls} = 0$ so that the remaining constraints reduce to $c_{ls}  \geq  \LB_{ls}$, $c_{ks}  \leq  \UB_{ks}$, and $c_{ks} - c_{ls}  \leq  \UB_{ks} - \LB_{ls}$ which are linearly dependent when assumed tight.
    
    \item Again, assuming tightness on $\inst{RUR:tight}{s}$ and $\inst{RUR:indic}{lks}$ gives $\delta_{lks} = 0$ and $\delta_{kls} = 1$. The remaining constraints reduce to $c_{ks}  \leq  \UB_{ls} - \PM_{kls}$, $c_{ls}  \leq  \UB_{ls}$, and $c_{ks} - c_{ls}  \leq  \PM_{lks} - \PM_{kls} - \PM_{lks}$ which are linearly dependent when assumed tight.
    \end{enumerate}
\end{proof}

Like Lemma \ref{lem:SUCoversNecessary}, the collections of constraints detailed above are minimally linearly dependent under the assumption that $\PM_{kls}  \neq  \UB_{ls} - \LB_{ks}$ for each index $(k,l,s)\in\pairs_{ij}$. The stronger version of Lemma \ref{lem:RUCoversNecessary} and its proof appear in Appendix \ref{app:StongLemmas}. It is also true that the collection $\inst{RUR:lb}{kls}$, $\inst{RUR:ub}{kls}$, $\inst{RUR:prec}{kls}$, $\inst{RUR:disj}{}$, $\inst{RUR:indic}{kls'}$, $\inst{RUR:indic}{lks'}$, and $\inst{RUR:indic}{kls}$ is minimally linearly independent. However, assuming tightness on $\inst{RUR:disj}{}$, $\inst{RUR:indic}{kls'}$, $\inst{RUR:indic}{lks'}$, and $\inst{RUR:indic}{kls}$ implies that $\delta_{kls} = 0,\ \delta_{lks} = 1,\ \delta_{kls} = 0,$ and $\delta_{lks} = 0$ which satisfies the integer requirements. There are other collections that similarly alter cases 1 and 3 in the above lemma, but none are important to \ref{model:IOM} because they necessarily imply an integer solution. The collections detailed in Lemma \ref{lem:RUCoversNecessary} are sufficient to demonstrate the idealness of \ref{model:RU} via a parametric implementation of \ref{model:IOM}.

\begin{proof}[Computational Proof of Theorem \ref{thm:RUideal}][\underline{Up to $\epsilon = \frac{r}{10}$}]
This proof parallels that of Theorem \ref{thm:SUideal}. We begin by implementing \ref{model:IOM} on \ref{model:rRU} with:
    \begin{itemize}
    \item Region size $r = 10$, a big-$M$ value of $r$, and a small constant $\epsilon = 1$;
    \item Parametric input data ($\LB$, $\UB$, and $\PM$)
    \item Under the constraint that $\PM_{kls}  \leq  \UB_{ls} - \LB_{ks} - \epsilon$ for each index $(k,l,s)\in\pairs_{ij}$; and
    \item \ref{IOM:LinDep} implemented only on the collections of constraints identified in Lemma \ref{lem:RUCoversNecessary}.
    \end{itemize}
Setting $r = 10$ is done w.l.o.g. because any non-ideal values of the parameters can be scaled. This value is also sufficiently large for the big-$M$ terms. Letting Gurobi Evaluate this optimization problem with parameters \verb|NonConvex=2|, \verb|NumericFocus=3|, \verb|FeasibilityTol=1e-9|, and \verb|IntFeasTol=1e-5| gives an optimal value of \verb|4.195e-05| (which is less than $n$ times the integer-feasibility-tolerance \verb|IntFeasTol|) in about 523 seconds. This value is effectively zero, so the Theorem holds by Corollary \ref{cor:IOMsubset}. 
\end{proof}

\subsection{\ref{model:SB-M} is Pairwise-Ideal}\label{sec:SBMideal}
Recall the theorem in question, \SBMideal*
\noindent\,and consider the continuous relaxation of \ref{model:SB-M}:
    \ParaTitle{Continuous Relaxation of \ref{model:SB-M}}{rSB}\label{model:rSB}
    \begin{NamedSubs}{rSB}\begin{FeasRegion}
    c_{ls}  \GEQ  \LB_{ks} + \PM_{kls} - (\LB_{ks} + \PM_{kls} - \LB_{ls})\, \bcf<\tilde>{\boldsymbol{\bar\delta}_{kls},\boldsymbol{\delta}_{ij}}
        & \forall\ (k,l,s)\in\combs_{ij} \label{rSB:lb}   \\
    c_{ks}  \LEQ  \UB_{ls} - \PM_{kls} - (\UB_{ls} - \PM_{kls} - \UB_{ks})\, \bcf<\tilde>{\boldsymbol{\bar\delta}_{kls},\boldsymbol{\delta}_{ij}}	 
        & \forall\ (k,l,s)\in\combs_{ij} \label{rSB:ub}   \\   
    c_{ls} - c_{ks}  \GEQ  \PM_{kls} + (\LB_{ls} - \PM_{kls} - \UB_{ks})\, \bcf<\tilde>{\boldsymbol{\bar\delta}_{kls},\boldsymbol{\delta}_{ij}}
        & \forall\ (k,l,s)\in\combs_{ij} \label{rSB:prec} \\
    \delta_{ij} + \delta_{ji} - \Delta_{ij}  \LEQ  1  \label{rSB:McCor3}\\
    \delta_{kl} - \Delta_{ij}  \GEQ  0  & \forall\ k,l\in\{i,j\}  \label{rSB:McCor12}\\
    \delta_{kl}  \GEQ  0  & \forall\ k,l\in\{i,j\}  \label{rSB:db-}\\
    \delta_{kl}  \LEQ  1  & \forall\ k,l\in\{i,j\}  \label{rSB:db+}
    \end{FeasRegion}\end{NamedSubs}

Again, we use the term \textit{instance} in reference to a constraint as it applies to a specific index. For example, the $(i,j,x)$ instance of \eqref{rSB:lb}, denoted $\inst{rSB:lb}{ijx}$, is given by
	$$
	c_{jx}  \GEQ  \LB_{ix} + \PM_{ijx} - (\LB_{ix} + \PM_{ijx} - \LB_{jx})\, \bcf<\tilde>{\boldsymbol{\bar\delta}_{ijx},\boldsymbol{\delta}_{ij}}
	$$
\begin{lemma}\label{lem:SBMCoversNecessary1} The following collections of constraints are linearly dependent in general:\hfill~
    \begin{enumerate}[label=(\roman*)]
    
    \item \eqref{rSB:McCor3}, $\inst{rSB:McCor12}{kl}$, and $\inst{rSB:db+}{lk}$ for either index $(k,l) \in \{(i,j),(j,i)\}$;\vspace{.5em}

    \item $\inst{rSB:lb}{ijx}$, $\inst{rSB:ub}{ijx}$, and $\inst{rSB:prec}{ijx}$ with:\vspace{.5em}\\
        \begin{enumerate*}[label=(\alph*)]
        \item \eqref{rSB:McCor3}, \hspace{1.5em}
        \item $\inst{rSB:McCor12}{ij}$ and $\inst{rSB:db+}{ji}$, or \hspace{1.5em}
        \item $\inst{rSB:McCor12}{ji}$ and $\inst{rSB:db+}{ij}$.
        \end{enumerate*}\vspace{.5em}

    \item $\inst{rSB:lb}{ijy}$, $\inst{rSB:ub}{ijy}$, and $\inst{rSB:prec}{ijy}$ with:\vspace{.5em}\\
        \begin{enumerate*}[label=(\alph*)]
        \item $\inst{rSB:McCor12}{ij}$ or \hspace{1.5em}
        \item \eqref{rSB:McCor3} and $\inst{rSB:db+}{ji}$.
        \end{enumerate*}\vspace{.5em}

    \item $\inst{rSB:lb}{jix}$, $\inst{rSB:ub}{jix}$, and $\inst{rSB:prec}{jix}$ with:\vspace{.5em}\\
        \begin{enumerate*}[label=(\alph*)]
        \item $\inst{rSB:McCor12}{ij}$ and $\inst{rSB:db-}{ij}$ or \hspace{1.5em}
        \item $\inst{rSB:McCor12}{ji}$ and $\inst{rSB:db-}{ji}$.
        \end{enumerate*}\vspace{.5em}
    
    \item $\inst{rSB:lb}{jiy}$, $\inst{rSB:ub}{jiy}$, and $\inst{rSB:prec}{jiy}$ with:\vspace{.5em}\\
        \begin{enumerate*}[label=(\alph*)]
        \item $\inst{rSB:McCor12}{ji}$ or \hspace{1.5em}
        \item \eqref{rSB:McCor3} and $\inst{rSB:db+}{ij}$.
        \end{enumerate*}\vspace{.5em}
    \end{enumerate}
\end{lemma}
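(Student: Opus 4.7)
The plan mirrors the strategy used in Lemmas \ref{lem:SUCoversNecessary} and \ref{lem:RUCoversNecessary}: for each collection listed, I will exhibit an explicit non-trivial linear combination of the constraint rows, viewed as affine forms, that vanishes identically.

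Case (i) is a one-line check. Normalizing each inequality to the form $(\,\cdot\,) \geq 0$, the rows $\eqref{rSB:McCor3}$, $\inst{rSB:McCor12}{kl}$, and $\inst{rSB:db+}{lk}$ with $(k,l) \in \{(i,j),(j,i)\}$ satisfy $\eqref{rSB:McCor3} - \inst{rSB:McCor12}{kl} - \inst{rSB:db+}{lk} = 0$ identically, since $(1 - \delta_{ij} - \delta_{ji} + \Delta_{ij}) - (\delta_{kl} - \Delta_{ij}) - (1 - \delta_{lk})$ collapses to zero for either orientation of the pair $(k,l)$.

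For cases (ii)--(v), the centerpiece of my plan is a single affine-form identity that I will prove holds for every index $(k,l,s) \in \combs_{ij}$:
\[
\inst{rSB:lb}{kls} + \inst{rSB:ub}{kls} - \inst{rSB:prec}{kls} \;=\; (\UB_{ls} - \LB_{ks} - \PM_{kls})\bigl(1 - \bcf<\tilde>{\boldsymbol{\bar\delta}_{kls},\boldsymbol{\delta}_{ij}}\bigr).
\]
Direct expansion confirms it: the continuous variables cancel ($\inst{rSB:lb}{kls}$ contributes $+c_{ls}$, $\inst{rSB:ub}{kls}$ contributes $-c_{ks}$, and $\inst{rSB:prec}{kls}$ contributes $+c_{ls}-c_{ks}$), the constants collapse to $\UB_{ls}-\LB_{ks}-\PM_{kls}$, and the three coefficients multiplying $\bcf<\tilde>$ telescope to $-(\UB_{ls}-\LB_{ks}-\PM_{kls})$. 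Under the standing hypothesis $\PM_{kls}<\UB_{ls}-\LB_{ks}$ the scalar prefactor is nonzero, so certifying dependence in each subcase reduces to writing $1 - \bcf<\tilde>{\boldsymbol{\bar\delta}_{kls},\boldsymbol{\delta}_{ij}}$ as a linear combination of the extra rows named in that subcase.

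From the reductions in Table \ref{tab:G2Labeling} one reads that $1-\bcf<\tilde>$ equals $1-\delta_{ij}-\delta_{ji}+\Delta_{ij}$ for $(i,j,x)$, $\delta_{ij}-\Delta_{ij}$ for $(i,j,y)$, $\Delta_{ij}$ for $(j,i,x)$, and $\delta_{ji}-\Delta_{ij}$ for $(j,i,y)$. Each of these expressions is a signed sum of at most two of the rows $\eqref{rSB:McCor3}$, $\inst{rSB:McCor12}{ij}$, $\inst{rSB:McCor12}{ji}$, $\inst{rSB:db+}{ij}$, $\inst{rSB:db+}{ji}$, $\inst{rSB:db-}{ij}$, $\inst{rSB:db-}{ji}$, and enumerating the short representations produces exactly subcases (ii)(a)--(c), (iii)(a)--(b), (iv)(a)--(b), (v)(a)--(b). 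Combining with the central identity, each listed collection admits the explicit vanishing combination $\inst{rSB:lb}{kls} + \inst{rSB:ub}{kls} - \inst{rSB:prec}{kls} - (\UB_{ls}-\LB_{ks}-\PM_{kls})\cdot[\text{chosen representation of }1-\bcf<\tilde>] = 0$. The only real obstacle is bookkeeping: a careful one-time expansion of the central identity and a short table of seven row-combinations. No technique beyond those used in the preceding lemmas is required.
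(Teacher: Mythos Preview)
Your proof is correct and follows essentially the same approach as the paper: both arguments rest on the observation that tightness of the ``extra'' constraints in each subcase forces $\bcf<\tilde>{\boldsymbol{\bar\delta}_{kls},\boldsymbol{\delta}_{ij}}=1$, after which $\inst{rSB:lb}{kls}+\inst{rSB:ub}{kls}-\inst{rSB:prec}{kls}$ vanishes. Your presentation is slightly more explicit---packaging the three main rows into the single affine identity $\inst{rSB:lb}{kls}+\inst{rSB:ub}{kls}-\inst{rSB:prec}{kls}=(\UB_{ls}-\LB_{ks}-\PM_{kls})(1-\bcf<\tilde>)$ and then tabulating representations of $1-\bcf<\tilde>$---whereas the paper phrases each case narratively, but the underlying linear combination is the same.
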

\begin{proof} We demonstrate the linear dependence of each collection by assuming tightness and considering a linear combination of the constraints:
    \begin{enumerate}[label=(\roman*)]
    
    \item Assuming tightness, the combination $\eqref{rSB:McCor3} - \inst{rSB:McCor12}{kl} - \inst{rSB:db+}{lk}$ reduces to $0 = 0$.

    \item In each subcase, assuming tightness implies that $\bcf<\tilde>{\boldsymbol{\bar\delta}_{ijx},\boldsymbol{\delta}_{ij}}  =  \delta_{ij} + \delta_{ji} - \Delta_{ij}  =  1$ and the remaining constraints reduce to $c_{jx}  \geq  \LB_{jx}$, $c_{ix}  \leq  \UB_{ix}$, and $c_{ix} - c_{jx}  \leq  \UB_{ix} - \LB_{jx}$ which are linearly dependent.
    
    \item In either subcase, assuming tightness implies that $\bcf<\tilde>{\boldsymbol{\bar\delta}_{ijy},\boldsymbol{\delta}_{ij}}  =  1 - \delta_{ij} + \Delta_{ij}  =  1$ and the remaining constraints reduce to $c_{jy}  \geq  \LB_{jy}$, $c_{iy}  \leq  \UB_{iy}$, and $c_{iy} - c_{jy}  \leq  \UB_{iy} - \LB_{jy}$ which are linearly dependent.

    \item In either subcase, assuming tightness implies that $\bcf<\tilde>{\boldsymbol{\bar\delta}_{jix},\boldsymbol{\delta}_{ij}}  =  1 - \Delta_{ij}  =  1$ and the remaining constraints reduce to $c_{ix}  \geq  \LB_{ix}$, $c_{jx}  \leq  \UB_{jx}$, and $c_{jx} - c_{ix}  \leq  \UB_{jx} - \LB_{ix}$ which are linearly dependent.

    \item In either subcase, assuming tightness implies that $\bcf<\tilde>{\boldsymbol{\bar\delta}_{jiy},\boldsymbol{\delta}_{ij}}  =  1 - \delta_{ji} + \Delta_{ij}  =  1$ and the remaining constraints reduce to $c_{iy}  \geq  \LB_{iy}$, $c_{jy}  \leq  \UB_{jy}$, and $c_{jy} - c_{iy}  \leq  \UB_{jy} - \LB_{iy}$ which are linearly dependent.
    \end{enumerate}
\end{proof}

\begin{lemma}\label{lem:SBMCoversNecessary2} The following collections of constraints are linearly dependent under certain conditions:\hfill~
    \begin{enumerate}[label=(\roman*)]
    \item $\inst{rSB:lb}{ijx}$, $\inst{rSB:ub}{ijx}$, $\inst{rSB:prec}{ijx}$, $\inst{rSB:lb}{ijy}$, $\inst{rSB:ub}{ijy}$, $\inst{rSB:prec}{ijy}$, and $\inst{rSB:db+}{ji}$ is linearly dependent so long as $\LB_{iy}+\PM_{ijy}-\UB_{jy} \neq 0$.

    \item $\inst{rSB:lb}{ijx}$, $\inst{rSB:ub}{ijx}$, $\inst{rSB:prec}{ijx}$, $\inst{rSB:lb}{jiy}$, $\inst{rSB:ub}{jiy}$, $\inst{rSB:prec}{jiy}$, and $\inst{rSB:db+}{ij}$ is linearly dependent so long as $\LB_{jy}+\PM_{jiy}-\UB_{iy} \neq 0$.

    \item $\inst{rSB:lb}{jix}$, $\inst{rSB:ub}{jix}$, $\inst{rSB:prec}{jix}$, $\inst{rSB:lb}{ijy}$, $\inst{rSB:ub}{ijy}$, $\inst{rSB:prec}{ijy}$, and $\inst{rSB:db-}{ij}$ is linearly dependent so long as $\LB_{iy}+\PM_{ijy}-\UB_{jy} \neq 0$.

    \item $\inst{rSB:lb}{ijx}$, $\inst{rSB:ub}{ijx}$, $\inst{rSB:prec}{ijx}$, $\inst{rSB:lb}{jiy}$, $\inst{rSB:ub}{jiy}$, $\inst{rSB:prec}{jiy}$, and $\inst{rSB:db-}{ji}$ is linearly dependent so long as $\LB_{jy}+\PM_{jiy}-\UB_{iy} \neq 0$.
    \end{enumerate}
\end{lemma}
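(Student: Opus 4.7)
The plan is to mimic the tightness-substitution strategy used in Lemma~\ref{lem:SBMCoversNecessary1}, but with one crucial twist: in each of these seven-row collections the single bound constraint only fixes one of $\delta_{ij}$ or $\delta_{ji}$ to $0$ or $1$, so neither $\bcf<\tilde>$ value collapses to a constant and no three-row directional block is independently dependent (generically). The dependence must instead arise from combining both directional blocks against the bound row. I will treat case~(i) in detail; the other three cases follow by relabeling.

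For case~(i), assume all seven constraints are tight, substitute the BCF expressions from Table~\ref{tab:G2Labeling} into the six lb/ub/prec equations, and work in the seven-dimensional variable space $(c_{ix},c_{jx},c_{iy},c_{jy},\delta_{ij},\delta_{ji},\Delta_{ij})$. The key structural observation is that $\delta_{ij}$ and $\Delta_{ij}$ appear in every lb/ub/prec equation only through the shared $\bcf<\tilde>$ value, so their column coefficients are always opposite-signed. From this a direct computation shows that the $x$-block combination $-\inst{rSB:lb}{ijx}+\inst{rSB:ub}{ijx}+\inst{rSB:prec}{ijx}$ cancels all $c$-variables and telescopes to the single row $\Gamma(\delta_{ij}+\delta_{ji}-\Delta_{ij})=\Gamma$, where $\Gamma:=\UB_{jx}-\LB_{ix}-\PM_{ijx}$; and the $y$-block combination $\inst{rSB:lb}{ijy}-\inst{rSB:ub}{ijy}-\inst{rSB:prec}{ijy}$ telescopes analogously to $\delta'(\Delta_{ij}-\delta_{ij})=0$, where $\delta':=\LB_{iy}+\PM_{ijy}-\UB_{jy}$.

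I would then exhibit the combined row operation
$$
\delta'\bigl(-\inst{rSB:lb}{ijx}+\inst{rSB:ub}{ijx}+\inst{rSB:prec}{ijx}\bigr)+\Gamma\bigl(\inst{rSB:lb}{ijy}-\inst{rSB:ub}{ijy}-\inst{rSB:prec}{ijy}\bigr)-\Gamma\delta'\cdot\inst{rSB:db+}{ji}
$$
and verify it is the identity $0=0$ on both sides: on the left, the two telescoped rows contribute $\delta'\Gamma(\delta_{ij}+\delta_{ji}-\Delta_{ij})+\Gamma\delta'(\Delta_{ij}-\delta_{ij})=\Gamma\delta'\,\delta_{ji}$, which is exactly cancelled by the bound row term $-\Gamma\delta'\,\delta_{ji}$; on the right, $\delta'\cdot\Gamma+\Gamma\cdot 0-\Gamma\delta'\cdot 1=0$. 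This certifies linear dependence of the seven tight equations. The hypothesis $\delta'\neq 0$ is precisely what keeps the coefficients on the $x$-block and bound row nonzero, so that the certifying combination is genuinely seven-term rather than degenerating to an identity inside the $y$-block alone.

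Cases (ii)--(iv) follow by the same template after relabeling according to the Gray-code assignments in Table~\ref{tab:G2Labeling}. In (ii) the $y$-block uses $j\preced{y}i$ and the bound is $\delta_{ij}=1$, replacing $\delta'$ by $\LB_{jy}+\PM_{jiy}-\UB_{iy}$; in (iii) the $x$-block uses $j\preced{x}i$ and the bound is $\delta_{ij}=0$; in (iv) both swaps occur. In each variant the stated hypothesis is exactly the coefficient needed for the analogous combined row operation to witness a non-trivial seven-row dependence. The main obstacle is not any deep idea but the careful sign-tracking of the BCF coefficients across the four Gray-coded variants; as a sanity check, each of the four certifying combinations can also be produced mechanically by feeding the corresponding seven-row submatrix of $(A'|\mathbf{b}')$ into the separation routine~\eqref{sep:IOMg} and reading off an integer null-space vector.
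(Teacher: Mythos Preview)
Your approach is correct and essentially the same as the paper's: both exhibit an explicit null-combination of the seven tight rows, and your combination is precisely the paper's multiplied through by $-\delta'$ (the paper uses unit coefficient on the $x$-block and the fraction $\beta=(\LB_{ix}+\PM_{ijx}-\UB_{jx})/(\LB_{iy}+\PM_{ijy}-\UB_{jy})=-\Gamma/\delta'$ on the $y$-block, whereas you clear denominators and put $\delta'$ on the $x$-block and $\Gamma$ on the $y$-block). Your block-by-block telescoping derivation and your remark on why $\delta'\neq 0$ is needed are more explicit than the paper's terse statement, but the content is identical.
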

\begin{proof}\hfill~
    \begin{enumerate}[label=(\roman*)]
    \item Assuming tightness, the combination $\inst{rSB:lb}{ijx} - \inst{rSB:ub}{ijx} - \inst{rSB:prec}{ijx} + \beta\inst{rSB:lb}{ijy} - \beta\inst{rSB:ub}{ijy} - \beta\inst{rSB:prec}{ijy} - \gamma\inst{rSB:db+}{ji}$, where $\beta = \frac{\LB_{ix}+\PM_{ijx}-\UB_{jx}}{\LB_{iy}+\PM_{ijy}-\UB_{jy}}$ and $\gamma = \LB_{ix}+\PM_{ijx}-\UB_{js}$, reduces to $0=0$.
        
    \item Assuming tightness, the combination $\inst{rSB:lb}{ijx} - \inst{rSB:ub}{ijx} - \inst{rSB:prec}{ijx} + \beta\inst{rSB:lb}{jiy} - \beta\inst{rSB:ub}{jiy} - \beta\inst{rSB:prec}{jiy} - \gamma\inst{rSB:db+}{ji}$, where $\beta = \frac{\LB_{ix}+\PM_{ijx}-\UB_{jx}}{\LB_{jy}+\PM_{jiy}-\UB_{iy}}$ and $\gamma = \LB_{ix}+\PM_{ijx}-\UB_{js}$, reduces to $0=0$.
    
    \item Assuming tightness, the combination $\inst{rSB:lb}{jix} - \inst{rSB:ub}{jix} - \inst{rSB:prec}{jix} + \beta\inst{rSB:lb}{ijy} - \beta\inst{rSB:ub}{ijy} - \beta\inst{rSB:prec}{ijy} + \gamma\inst{rSB:db+}{ji}$, where $\beta = \frac{\LB_{jx}+\PM_{jix}-\UB_{ix}}{\LB_{iy}+\PM_{ijy}-\UB_{jy}}$ and $\gamma = \LB_{jx}+\PM_{jix}-\UB_{is}$, reduces to $0=0$.
    
    \item Assuming tightness, the combination $\inst{rSB:lb}{jix} - \inst{rSB:ub}{jix} - \inst{rSB:prec}{jix} + \beta\inst{rSB:lb}{jiy} - \beta\inst{rSB:ub}{jiy} - \beta\inst{rSB:prec}{jiy} + \gamma\inst{rSB:db+}{ji}$, where $\beta = \frac{\LB_{jx}+\PM_{jix}-\UB_{ix}}{\LB_{jy}+\PM_{jiy}-\UB_{iy}}$ and $\gamma = \LB_{jx}+\PM_{jix}-\UB_{is}$, reduces to $0=0$.
    \end{enumerate}
\end{proof}

\begin{proof}[Computational Proof of Theorem \ref{thm:SBMideal}][\underline{Up to $\epsilon = \frac{r}{10}$}]
We begin by implementing \ref{model:IOM} on \ref{model:rSB} with:
    \begin{itemize}
    \item Region size $r = 10$, a big-$M$ value of $r$, and a small constant $\epsilon = 1$;
    \item Parametric input data ($\LB$, $\UB$, and $\PM$)
    \item Under the constraint that $\PM_{kls}  \leq  \UB_{ls} - \LB_{ks} - \epsilon$ for each index $(k,l,s)\in\pairs_{ij}$; and
    \item \ref{IOM:LinDep} implemented only on the collections of constraints identified in Lemmas \ref{lem:SBMCoversNecessary1} and \ref{lem:SBMCoversNecessary2}.
    \end{itemize}
Setting $r = 10$ is done w.l.o.g. because any non-ideal values of the parameters can be scaled. This value is also sufficiently large for the big-$M$ terms. Letting Gurobi Evaluate this optimization problem with parameters \verb|NonConvex=2|, \verb|NumericFocus=3|, \verb|FeasibilityTol=1e-8|, and \verb|IntFeasTol=1e-5| gives an optimal value of \verb|1.494e-07| (which is less than $n$ times the integer-feasibility-tolerance \verb|IntFeasTol|) in about 26 seconds. This value is effectively zero, so the Theorem holds by Corollary \ref{cor:IOMsubset}. 
\end{proof}

\section{Conclusions}\label{sec:Conclusions}
We present our final computation comparison in Table \ref{tab:Averages-15} by averaging the results of four different 15 object instances. We find that the \ref{model:RU} formulation, with its innate tendency towards strong spacial branching, performs well without additional branching priorities but looses much of its advantage when they are present. The information-dense \ref{model:SB-L} and \ref{model:SB-M} formulations better capitalize on the symmetry breaking inequalities and branching rules. The benefit of turning off Gurobi's Cuts and Heuristics parameters remains clear. 

\begin{table}[h!]\centering
    \caption{The average runtime and percent optimally gap over four instances of 15 objects. Gurobi's Cuts and Heuristics were active in the first row but turned off in the second. The second column has both sequence pair inequalities and branching priority while the first has neither.}
    \label{tab:Averages-15}
    \setlength{\tabcolsep}{4pt}
    \begin{tabular}{|ccc|ccc||ccc|}
    \cline{4-9}    \multicolumn{2}{r}{} & \multicolumn{1}{r|}{} & \multicolumn{3}{c||}{Default Settings} & \multicolumn{3}{c|}{Sequence Pair and Branching} \\
    \hline
    C & H & Model & Runtime & MIP Gap & Nodes & Runtime & MIP Gap & Nodes \\
    \multirow{4}[1]{*}{\checkmark} & \multirow{4}[1]{*}{\checkmark} & SU    & 1842s & 1.71\% & 2.E+05 & 1894s & 2.04\% & 8.E+04 \\
          && RU    & 274s  & 0\%   & 6.E+04 & 1802s & 2.04\% & 6.E+04 \\
          && SB-L   & 1862s & 1.79\% & 3.E+05 & 545s & 0\%   & 4.E+04 \\
          && SB-M   & 2700s & 4.46\% & 5.E+05 & 508s & 0\%   & 5.E+04 \\
    \hline
    C & H & Model & Runtime & MIP Gap & Nodes & Runtime & MIP Gap & Nodes \\
    \multirow{4}[1]{*}{\xmark} & \multirow{4}[1]{*}{\xmark} & SU    & 1903s & 0.60\% & 4.E+06 & 1359s & 2.04\% & 3.E+05 \\
          && RU    & 281s & 0\%   & 5.E+05 & 1301s & 2.04\% & 3.E+05 \\
          && SB-L   & 944s & 0\%   & 2.E+06 & 93s & 0\%   & 7.E+04 \\
          && SB-M   & 2700s & 4.45\% & 2.E+06 & 119s & 0\%   & 6.E+04 \\
    \hline
    \end{tabular}%

    \end{table}

We have developed a framework for generating computer-aided proofs of idealness for mixed binary linear programs and used it to confirm a prominent conjecture made by the authors of \cite{StrongFloorLayout2017HuchetteVielma} and correct a non-ideal formulation for the rectangle packing problem without significantly increasing its size. We also demonstrated the performance of this and other formulations on the rectangular strip-packing problem with clearances.

In future work, we would like to  connect our Ideal O'Matic  to work with solvers that perform rational arithmetic like \hyperlink{https://www.scipopt.org/}{SCIP} and then use \hyperlink{https://github.com/ambros-gleixner/VIPR}{VIPR} \cite{VerifyingIntegerProgramming2017CheungGleixnerSteffy} to output certificates that serve as proofs of idealness. There also exists a variety of objective functions which could be applied to these formulations that may behave differently; in particular, an objective that is not so far removed from the binary variables may perform better under Gurobi's default branch-and-cut scheme.

\printbibliography

\newpage
\begin{appendices}
\section{Additional Tables and Plots}\label{sec:AdditionalTablesAndPlots}

    \begin{table}[ht]\centering
    \caption{Computational results for the \ref{model:SU}, \ref{model:RU}, and \ref{model:SB} formulations when turning off cuts or heuristics. Columns C and H indicate if cuts and heuristics, respectively, were active (\checkmark) or disabled (\xmark).}
    \label{tab:CutsTest}

\begin{tabular}{|ccc||cc||cc||cc||cc||}
\cline{4-11}\multicolumn{1}{c}{} &       &       & \multicolumn{2}{c||}{SU} & \multicolumn{2}{c||}{RU} & \multicolumn{2}{c||}{SB-L} & \multicolumn{2}{c||}{SB-M} \\
\hline
N     & C     & H     & Time/Gap & Nodes & Time/Gap & Nodes & Time/Gap & Nodes & Time/Gap & Nodes \\
\hline
\multirow{4}[2]{*}{10} & \cmark & \cmark & 7.79s & 1E+04 & 5.3s  & 6E+03 & 4.93s & 1E+04 & 3.95s & 7E+03 \\
      & \cmark & \xmark & 0.37s & 7E+03 & 0.53s & 9E+03 & 0.69s & 2E+04 & 0.93s & 1E+04 \\
      & \xmark & \cmark & 1.91s & 9E+03 & 4.2s  & 1E+04 & 1.76s & 1E+04 & 2.27s & 8E+03 \\
      & \xmark & \xmark & 0.22s & 7E+03 & 0.25s & 7E+03 & 0.27s & 1E+04 & 0.33s & 1E+04 \\
\hline
\multirow{4}[2]{*}{15} & \cmark & \cmark & 10.2\% & 1E+06 & 4.1\% & 1E+06 & 12\%  & 8E+05 & 18\%  & 1E+06 \\
      & \cmark & \xmark & 10.2\% & 1E+06 & 12\%  & 1E+06 & 16\%  & 1E+06 & 16\%  & 3E+06 \\
      & \xmark & \cmark & 6.1\% & 3E+06 & 10621.25s & 2E+06 & 8\%   & 3E+06 & 12\%  & 3E+06 \\
      & \xmark & \xmark & 6.1\% & 1E+07 & 7293.79s & 7E+06 & 8\%   & 2E+07 & 14\%  & 7E+06 \\
\hline
\multirow{4}[2]{*}{20} & \cmark & \cmark & 46.2\% & 2E+05 & 38.5\% & 3E+05 & 42.5\% & 4E+05 & 53.7\% & 5E+05 \\
      & \cmark & \xmark & 39.2\% & 2E+06 & 41.8\% & 6E+05 & 43.9\% & 1E+06 & 54.5\% & 1E+06 \\
      & \xmark & \cmark & 38.5\% & 9E+05 & 38.5\% & 5E+05 & 40.7\% & 8E+05 & 54.9\% & 6E+05 \\
      & \xmark & \xmark & 35.1\% & 8E+06 & 35.1\% & 5E+06 & 39.2\% & 4E+06 & 50.6\% & 3E+06 \\
\hline
\multirow{4}[2]{*}{25} & \cmark & \cmark & 54.5\% & 3E+05 & 55.1\% & 3E+05 & 53.9\% & 2E+05 & 55.1\% & 4E+05 \\
      & \cmark & \xmark & 51.9\% & 1E+06 & 55.7\% & 8E+05 & 51.9\% & 1E+06 & 56.3\% & 1E+06 \\
      & \xmark & \cmark & 53.9\% & 9E+05 & 50.6\% & 5E+05 & 54.5\% & 6E+05 & 55.1\% & 6E+05 \\
      & \xmark & \xmark & 42.9\% & 9E+06 & 43.6\% & 4E+06 & 50\%  & 4E+06 & 55.7\% & 2E+06 \\
\hline
\end{tabular}%

    \end{table}
        
    \begin{table}[ht]\centering
    \caption{Computational results for each variation in the \ref{model:SU}, \ref{model:RU}, and \ref{model:SB} formulations in terms of convergence and nodes visited. Runtime is reported if the program converged in under three hours. ``Static'' indicates that the first two constraints of each formulation have been replaced with the static bounds $c_{is} \in [\LB_{is},\UB_{is}]$; ``Branch'' indicates that the branching priorities \eqref{eq:BranchPriorityUnary} and \eqref{eq:BranchPriorityBinary} have been applied to the binary variable; and ``Sequence'' indicates that the Sequence-Pair inequalities \ref{cuts:SPU} and\ref{cuts:SPB} have been added to the formulation.}
    \label{tab:SBSPTest}
    \setlength{\tabcolsep}{4pt}
\begin{tabular}{|cc||cc||cc||cc||cc||}
\cline{3-10}\multicolumn{1}{c}{} &       & \multicolumn{2}{c||}{SU} & \multicolumn{2}{c||}{RU} & \multicolumn{2}{c||}{SB-L} & \multicolumn{2}{c||}{SB-M} \\
\multicolumn{1}{c}{} &       & Time/Gap & Nodes & Time/Gap & Nodes & Time/Gap & Nodes & Time/Gap & Nodes \\
\hline
\multirow{5}[2]{*}{10} & Standard & 0.22s & 7E+3  & 0.25s & 7E+3  & 0.27s & 1E+4  & 0.33s & 1E+4 \\
      & Static & 0.22s & 9E+3  & 2.8s  & 7E+3  & 0.16s & 1E+4  & 0.3s  & 1E+4 \\
      & Branch & 0.45s & 2E+4  & 0.31s & 1E+4  & 0.23s & 2E+4  & 0.32s & 1E+4 \\
      & Sequence & 0.51s & 1E+4  & 0.52s & 9E+3  & 0.39s & 1E+4  & 0.57s & 1E+4 \\
      & Seq \& Br & 0.31s & 5E+3  & 0.33s & 6E+3  & 0.35s & 1E+4  & 0.3s  & 1E+4 \\
\hline
\multirow{5}[2]{*}{15} & Standard & 6.1\% & 1E+7  & 7293.79s & 7E+6  & 8\%   & 2E+7  & 14\%  & 7E+6 \\
      & Static & 6\%   & 1E+8  & 6014.87s & 9E+6  & 4.1\% & 2E+8  & 10\%  & 2E+7 \\
      & Branch & 16\%  & 2E+7  & 8\%   & 1E+7  & 6\%   & 4E+7  & 12\%  & 1E+7 \\
      & Sequence & 1310.37s & 3E+6  & 3116.79s & 4E+6  & 11.52s & 5E+4  & 17.69s & 5E+4 \\
      & Seq \& Br & 6.1\% & 2E+7  & 8\%   & 1E+7  & 5.76s & 3E+4  & 7.39s & 3E+4 \\
\hline
\multirow{5}[2]{*}{20} & Standard & 35.1\% & 8E+6  & 35.1\% & 5E+6  & 39.2\% & 4E+6  & 50.6\% & 3E+6 \\
      & Static & 41.3\% & 1E+7  & 42.9\% & 3E+6  & 40\%  & 1E+7  & 47\%  & 1E+7 \\
      & Branch & 37.7\% & 8E+6  & 34.6\% & 5E+6  & 32.9\% & 8E+6  & 48.1\% & 5E+6 \\
      & Sequence & 38.5\% & 8E+5  & 41.3\% & 7E+5  & 27.7\% & 3E+6  & 31.8\% & 2E+6 \\
      & Seq \& Br & 35.1\% & 9E+5  & 35.1\% & 9E+5  & 26.2\% & 3E+6  & 28.2\% & 1E+6 \\
\hline
\multirow{5}[2]{*}{25} & Standard & 42.9\% & 9E+6  & 43.6\% & 4E+6  & 50\%  & 4E+6  & 55.7\% & 2E+6 \\
      & Static & 53.2\% & 1E+7  & 53.8\% & 3E+6  & 47.4\% & 1E+7  & 56.3\% & 9E+6 \\
      & Branch & 44.2\% & 8E+6  & 43.6\% & 5E+6  & 40\%  & 7E+6  & 55.7\% & 4E+6 \\
      & Sequence & 48.7\% & 7E+5  & 51.3\% & 7E+5  & 37\%  & 2E+6  & 42.7\% & 1E+6 \\
      & Seq \& Br & 46.1\% & 9E+5  & 47.4\% & 8E+5  & 36.3\% & 2E+6  & 36.3\% & 1E+6 \\
\hline
\end{tabular}%

    \end{table}

\begin{figure}[ht]\centering
    \begin{subfigure}[b]{0.7\textwidth}\centering
        \includegraphics[width=\textwidth]{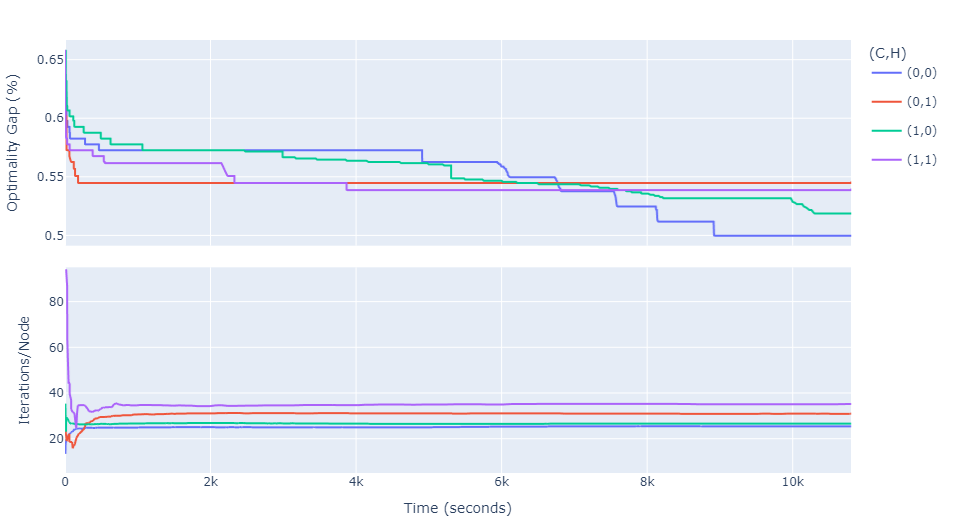}
        \caption{Cuts and Heuristics test for \ref{model:SB-L} Strip Packing with $\numobjs = 25$.}
        \label{fig:CutsTestPlot:SBL25}
    \end{subfigure}
    \end{figure}
    \begin{figure}[ht]\ContinuedFloat\centering
    \begin{subfigure}[b]{0.7\textwidth}\centering
        \includegraphics[width=\textwidth]{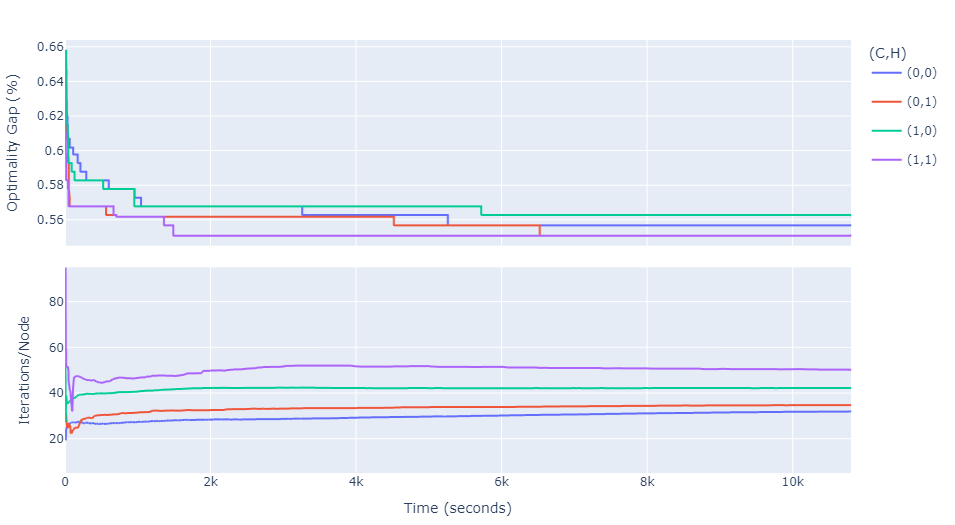}
        \caption{Cuts and Heuristics test for \ref{model:SB-M} Strip Packing with $\numobjs = 25$.}
        \label{fig:CutsTestPlot:RBM25}
    \end{subfigure}
    \caption{Plots comparing each of the $\numobjs = 25$ runs for \ref{model:SB-L} and \ref{model:SB-M} in the Cuts and Heuristics experiments (see Table \ref{tab:CutsTest}). We experiment with Gurobi's Cuts and Heuristics Parameters: Cuts are active where $C = 1$ and Heuristics are active where $H = 1$. Gaps and Iterations per Node are extracted from Gurobi's output log files via the \protect\hyperlink{https://github.com/Gurobi/gurobi-logtools}{gurobi-logtools} package for python.}
    \label{fig:CutsTestPlotsSB}
    \end{figure}

\vfill
\newpage
\section{An Analytic Proof of Theorem \ref{thm:SUideal}}\label{sec:analyticSU}
This proof is largely a copy of the proof in Appendix B of \citet{StrongFloorLayout2017HuchetteVielma}. The original proof is for the Floor Layout Problem which is a related but slightly more complicated version of rectangle packing.

\begin{proof}[Proof of Theorem \ref{thm:SUideal}]
Notice that constraints \eqref{SU:prec}, \eqref{SU:disj}, and \eqref{SU:indic} clearly enforce the non-overlapping condition. Thus we begin by demonstrating the validity of \eqref{SU:lb} and \eqref{SU:ub} by considering the following cases for each $s\in\{x,y\}$ and $\{k,l\}=\{i,j\}$:
	\begin{itemize}
	\item \underline{If $\delta_{kls} = 0$ and $\delta_{lks} = 0$}, then \eqref{SU:lb} and \eqref{SU:ub} reduce to $\LB_k \leq c_k \leq \UB_k$ which ensures that object $i$ is entirely contained within the region.
	\item \underline{If $\delta_{kls} = 1$ and $\delta_{lks} = 0$}, then \eqref{SU:lb} and \eqref{SU:ub} reduce to $\LB_k \leq c_k \leq \UB_l - \PM_{kls}$ which tightens the upper under the assumption that $(k\preced{s}l)$.
	\item \underline{If $\delta_{kls} = 0$ and $\delta_{lks} = 1$}, then \eqref{SU:lb} and \eqref{SU:ub} reduce to $\LB_l + \PM_{lkx} \leq c_k \leq \UB_l$ which tightens the lower under the assumption that $(l\preced{s}k)$.
	\item \underline{$\delta_{kls} = 1$ and $\delta_{lks} = 1$} is not feasible by \eqref{SU:disj}.
	\end{itemize}
To demonstrate the idealness of formulation \ref{model:SU}, we need to show that every extreme point solution to its relaxation has $\boldsymbol{\delta}_{ij} \in \{0,1\}^4$; or instead, by contraposition, that no solution with fractional elements in $\boldsymbol{\delta}_{ij}$ is an extreme point.

Since the $x$ and $y$ dimensions are largely independent of one another (outside of constraint \eqref{SU:disj}), we consider the following projection of the continuous relaxation of \ref{model:SU} onto just one dimension:
	\begin{subequations}\label{P}
	\begin{gather}
	c_i  \GEQ  \LB_i(1-\delta_{ji}) + (\LB_j + \PM_{ji})\delta_{ji} 	\label{P:a}
	\\
	c_i  \LEQ  \UB_i(1-\delta_{ij}) + (\UB_j - \PM_{ij})\delta_{ij} 	\label{P:b}
	\\
	c_j  \GEQ  \LB_j(1-\delta_{ij}) + (\LB_i + \PM_{ij})\delta_{ij} 	\label{P:c}
	\\
	c_j  \LEQ  \UB_j(1-\delta_{ji}) + (\UB_i - \PM_{ji})\delta_{ji} 	\label{P:d}
	\\
	c_i + \PM_{ij}  \LEQ  c_j + \BM_{ij}(1-\delta_{ij})		 		    \label{P:e}
	\\
	c_j + \PM_{ji}  \LEQ  c_i + \BM_{ji}(1-\delta_{ji})		     		\label{P:f}
	\\
	\delta_{ij} + \delta_{ji}  \LEQ  1 									\label{P:g}
	\\
	\delta_{ij}  \GEQ  0 										\label{P:h}
	\\
	\delta_{ji}  \GEQ  0 										\label{P:i}
	\end{gather}
	\end{subequations}
Now take some arbitrary solution $\big(\mathbf{\hat{c}},\boldsymbol{\hat\delta}\big)$ feasible to \eqref{P} such that $\hat\delta_{ij} \in (0,1)$. Knowing that the argument for fractional $\hat\delta_{ji}$ follows analogously, we proceed to show that $\big(\mathbf{\hat{c}},\boldsymbol{\hat\delta}\big)$ is not an extreme point of \eqref{P} by case analysis. 

Since $\hat\delta_{ij} > 0$, \eqref{P:g} implies that $\hat\delta_{ji} < 1$. Further, constraint \eqref{P:h} is not tight and our first case analysis will branch first on the tightness of constraint \eqref{P:i}. Also notice that if \eqref{P:b} and \eqref{P:c} are both tight, we would have
	\begin{gather*}
	c_j  \EQ  \LB_j + (\LB_i + \PM_{ij} - \LB_j)\hat\delta_{ij}     \\
	c_j - \LB_j + (\LB_j - \PM_{ij} - \LB_i)\hat\delta_{ij}  \EQ  0 
	\end{gather*}
	\begin{align*}
	c_i  &\EQ  \UB_i + (\UB_j - \PM_{ij} - \UB_i)\hat\delta_{ij}  \\
	c_i + \PM_{ij}  &\EQ  \UB_i + (\UB_j - \PM_{ij} - \UB_i)\hat\delta_{ij} + \PM_{ij}  \\
					&\EQ  \underline{c_j - \LB_j + (\LB_j - \PM_{ij} - \LB_i)\hat\delta_{ij}} + \UB_i + \PM_{ij} + (\UB_j - \PM_{ij} - \UB_i)\hat\delta_{ij}  \\
		            &\EQ  c_j + \UB_i + \PM_{ij} - \LB_j + (\UB_j - \PM_{ij} - \UB_i + \LB_j - \PM_{ij} - \LB_i)\hat\delta_{ij}  \\
  		            &\EQ  c_j + \UB_i + \PM_{ij} - \LB_j - (\UB_i + \PM_{ij} - \LB_j - \UB_j + \PM_{ij} + \LB_i)\hat\delta_{ij}  \\
					&\EQ  c_j + \BM_{ij} - (\BM_{ij} - \UB_j + \PM_{ij} + \LB_i)\hat\delta_{ij}  \\
					&\EQ  c_j + \BM_{ij}\big(1-\hat\delta_{ij}\big) - \big(\PM_{ij} - (\UB_j - \LB_i)\big)\hat\delta_{ij}  \\
					&\GS  c_j + \BM_{ij}\big(1-\hat\delta_{ij}\big) \qquad\text{since}\qquad \PM_{ij} < \UB_j - \LB_i
	\end{align*}
which contradicts constraint \eqref{P:e}. Thus, constraints \eqref{P:b} and \eqref{P:c} cannot both be tight. By a similar argument, under the assumption that \eqref{P:i} is not tight ($\hat\delta_{ji} > 0$), constraints \eqref{P:a} and \eqref{P:d} cannot both be tight.

	\begin{enumerate}[label=\Roman*.]
	\item Constraint \eqref{P:i} is tight.
		\begin{enumerate}[label=\roman*.]
		\item Constraint \eqref{P:f} is tight.
		\item Constraint \eqref{P:f} is \textit{not} tight.
		\end{enumerate}
	\item Constraint \eqref{P:i} is \textit{not} tight.
		\begin{enumerate}[label=\roman*.]
		\item Constraints \eqref{P:e} and \eqref{P:f} are both tight.
			\begin{enumerate}[label=\alph*.]
			\item Constraint \eqref{P:g} is tight.
			\item Constraint \eqref{P:g} is \textit{not} tight.
			\end{enumerate}
		\item At most \textit{one} of constraints \eqref{P:e} and \eqref{P:f} is tight.
			\begin{enumerate}[label=\alph*.]
			\item Constraint \eqref{P:g} is \textit{not} tight.
			\item Constraint \eqref{P:g} is tight.
			\end{enumerate}
		\end{enumerate}
	\end{enumerate}

\noindent We will show that at most three linearly independent constraints from \eqref{P} can be tight in each case.

\begin{itemize}
\item[\underline{I.i.}] Constraint \eqref{P:i} being tight implies that \eqref{P:g} is not tight. Constraint \eqref{P:f} also being tight further implies
	\begin{align}
	c_j + \PM_{ji}  &\EQ  c_i + \BM_{ji}  \nonumber\\
		&\EQ  c_i + \UB_j + \PM_{ji} - \LB_i \nonumber\\
		&\quad\Rightarrow\hquad c_j - c_i  \EQ  \UB_j - \LB_i \label{I.i.Star}.
	\end{align}
Notice that \eqref{P:a} and \eqref{P:d} reduce to $c_i  \GEQ  \LB_i$ and $c_j  \LEQ  \UB_j$ which must be tight since their difference is exactly \eqref{I.i.Star}. However, this also implies that at most three of \eqref{P:a}, \eqref{P:d}, \eqref{P:f}, and \eqref{P:i} form a linearly independent set. It remains only to show that none of the remaining constraints can be tight.

Now assume, for the sake of contradiction, that \eqref{P:b} and \eqref{P:c} are tight. Recalling that $\PM_{ij} < \UB_j - \LB_i$ and reducing, we are  respectively left with:

	\vspace{-1em}
	\begin{minipage}{0.45\textwidth}
	\begin{align*}
	\LB_i  &\EQ  \UB_i(1-\delta_{ij}) + (\UB_j - \PM_{ij})\delta_{ij} 	\\
  		   &\GS  \UB_i(1-\delta_{ij}) + \LB_i\delta_{ij}  \\
  		   &\quad\Rightarrow\hquad  \LB_i(1-\delta_{ij})  \GS  \UB_i(1-\delta_{ij})	\\
   		   &\quad\Rightarrow\hquad  \LB_i  \GS  \UB_i,
	\end{align*}
	\end{minipage}
	~\hfill~
	\begin{minipage}{0.45\textwidth}
	\begin{align*}
	\UB_j  &\EQ  \LB_j(1-\delta_{ij}) + (\LB_i + \PM_{ij})\delta_{ij}	\\
  		   &\LS  \LB_j(1-\delta_{ij}) + \UB_j\delta_{ij}  \\
  		   &\quad\Rightarrow\hquad  \UB_j(1-\delta_{ij})  \LS  \LB_j(1-\delta_{ij})	\\
   		   &\quad\Rightarrow\hquad  \UB_j  \LS  \LB_j.
	\end{align*}
	\end{minipage}\vspace{1em}
Such bounds are clearly infeasible and we have our contradiction.

The only remaining constraint is \eqref{P:e} which reduces to
	$
	\PM_{ij} - (\UB_j - \LB_i)  \leq  \BM_{ij}\big(1-\hat\delta_{ij}\big).
	$
Notice that the left-hand side is negative while the right-hand side is strictly positive, thus \eqref{P:e} cannot be tight.

\item[\underline{I.ii.}] As above, constraint \eqref{P:i} being tight implies that \eqref{P:g} is not tight. However, the non-tightness of \eqref{P:f} implies:
	\begin{align}
	c_j + \PM_{ji}  &\LS  c_i + \BM_{ji}  \nonumber\\
		&\LS  c_i + \UB_j + \PM_{ji} - \LB_i \nonumber\\
		&\hquad\Rightarrow\hquad c_j - c_i  \LS  \UB_j - \LB_i \label{I.ii.Star}.
	\end{align}
Notice that \eqref{P:a} and \eqref{P:d} reduce to $c_i  \GEQ  \LB_i$ and $c_j  \LEQ  \UB_j$; by comparing their difference to \eqref{I.ii.Star}, we find that at most one of \eqref{P:a} and \eqref{P:d} may be tight.

Assume, for the sake of contradiction, that \eqref{P:a} and \eqref{P:b} are both tight and consider their difference: 
	\begin{align*}
	c_i - c_i  &\EQ  \UB_i(1-\delta_{ij}) + (\UB_j - \PM_{ij})\delta_{ij} - \LB_i \\
	   		   &\EQ  \UB_i - \LB_i + (\UB_j - \PM_{ij} - \UB_i)\delta_{ij} \\
	   		   &\GS  \UB_i - \LB_i - (\UB_i - \LB_i)\delta_{ij} \\
	   		   &\GS  \UB_i - \LB_i \\
	   		   &\GS  0.
	\end{align*}
Thus, \eqref{P:a} and \eqref{P:b} cannot both be tight.

Assume, again for the sake of contradiction, that \eqref{P:a}, \eqref{P:c}, and \eqref{P:e} are all tight and consider the sum $\eqref{P:a} - \eqref{P:c} - \eqref{P:e}$:

	\begin{align*}
	c_i - c_j - c_i - \PM_{ij}  &\EQ  \LB_i - \LB_j(1-\delta_{ij}) - (\LB_i + \PM_{ij})\delta_{ij} - c_j  - \BM_{ij}(1-\delta_{ij})  \\
	0  &\EQ  \LB_i + \PM_{ij} - \LB_j(1-\delta_{ij}) - (\LB_i + \PM_{ij})\delta_{ij}  - \BM_{ij}(1-\delta_{ij})  \\
	0  &\EQ  (\LB_i + \PM_{ij})(1 - \delta_{ij}) - \LB_j(1-\delta_{ij}) - \BM_{ij}(1-\delta_{ij})  \\
	0  &\EQ  (\LB_i + \PM_{ij} - \LB_j - \UB_i - \PM_{ij} + \LB_j)(1-\delta_{ij})  \\
	0  &\EQ  (\LB_i - \UB_i)(1-\delta_{ij}) \hquad<\hquad 0.
	\end{align*}
This is a contradiction, so it cannot be that \eqref{P:a}, \eqref{P:c}, and \eqref{P:e} are all tight. By a similar argument, we can show that \eqref{P:b}, \eqref{P:d}, and \eqref{P:e} are all tight. Recalling that we previously demonstrated that \eqref{P:c} and \eqref{P:d} cannot both be tight. Therefore, at most two of $(\ref{P:a}-\ref{P:e})$ can be simultaneously tight in addition to \eqref{P:i} and this case has no extreme point.

\item[\underline{II.i.a.}] Constraint \eqref{P:g} being tight while \eqref{P:i} is not tight implies that \eqref{P:h} cannot be tight. 

Consider first a reduction of \eqref{P:e}, \eqref{P:f}, and \eqref{P:g}:
	\begin{gather*}
	\begin{cases}
	c_i + \PM_{ij}  \EQ  c_j + \BM_{ij}(1-\hat\delta_{ij})	\\
	c_j + \PM_{ji}  \EQ  c_i + \BM_{ji}(1-\hat\delta_{ji})	\\
	\hat\delta_{ij} + \hat\delta_{ji}  \EQ  1
	\end{cases}
	\\
	\begin{cases}
	c_i - c_j  \EQ  \BM_{ij}(1-\hat\delta_{ij}) - \PM_{ij}	\\
	c_i - c_j  \EQ  \PM_{ji} - \BM_{ji}\hat\delta_{ij}
	\end{cases}
	\\
	\PM_{ji} - \BM_{ji}\hat\delta_{ij}  \EQ  \BM_{ij}(1-\hat\delta_{ij}) - \PM_{ij}
	\\
	(\BM_{ij} - \BM_{ji})\hat\delta_{ij}  \EQ  \BM_{ij} - \PM_{ij} - \PM_{ji}
	\\
	(\BM_{ji} - \BM_{ij})\hat\delta_{ij}  \EQ  \PM_{ij} + \PM_{ji} - \UB_i - \PM_{ij} + \LB_j
	\\
	\hat\delta_{ij}  \EQ  \frac{\PM_{ji} - (\UB_i - \LB_j)}{\BM_{ji} - \BM_{ij}}.
	\end{gather*}	
Which is only positive if $\BM_{ji} < \BM_{ij}$	since the numerator is negative. Now consider expanding the denominator under this assumption:
	\begin{align*}
	\hat\delta_{ij}  &\EQ  \frac{\PM_{ji} - (\UB_i - \LB_j)}{\BM_{ji} - \BM_{ij}}
	\\
		&\EQ  \frac{\PM_{ji} - \UB_i + \LB_j}{\UB_j + \PM_{ji} -\LB_i - \UB_i - \PM_{ij} + \LB_j}
	\\
		&\EQ  \frac{(\PM_{ji} - \UB_i + \LB_j)}{(\PM_{ji} - \UB_i + \LB_j) - (\PM_{ij} - \UB_j + \LB_i)}.
	\end{align*}
Each of the parenthetical terms is negative and the denominator has a smaller absolute value than the numerator, so $\hat\delta_{ij} > 1$ which violates \eqref{P:g}. Thus this case is actually infeasible.

\item[\underline{II.i.b.}] In this case, constraints \eqref{P:e} and \eqref{P:f} are tight while \eqref{P:g} and \eqref{P:i} are not. Consider an instance where \eqref{P:a} and \eqref{P:b} are also both tight:
	\begin{gather*}
	\begin{cases}
	c_i  \EQ  \LB_i(1-\delta_{ji}) + (\LB_j + \PM_{ji})\delta_{ji}
	\\
	c_i  \EQ  \UB_i(1-\delta_{ij}) + (\UB_j - \PM_{ij})\delta_{ij}
	\\
	c_i + \PM_{ij}  \EQ  c_j + \BM_{ij}(1-\delta_{ij})
	\\
	c_j + \PM_{ji}  \EQ  c_i + \BM_{ji}(1-\delta_{ji})
	\end{cases}
	\\
	\begin{cases}
	c_i  \EQ  \LB_i + (\LB_j + \PM_{ji} - \LB_i)\delta_{ji}
	\\
	c_i  \EQ  \UB_i + (\UB_j - \PM_{ij} - \UB_i)\delta_{ij}
	\\
	c_j  \EQ  c_i + \PM_{ij} - \BM_{ij} + \BM_{ij}\delta_{ij}
	\\
	c_j  \EQ  c_i - \PM_{ji} + \BM_{ji} - \BM_{ji}\delta_{ji}
	\end{cases}
	\\
	\begin{cases}
	c_j  \EQ  \UB_i + \PM_{ij} - \UB_i - \PM_{ij} + \LB_j + (\UB_j - \PM_{ij} - \UB_i)\delta_{ij} + \BM_{ij}\delta_{ij}
	\\
	c_j  \EQ  \LB_i - \PM_{ji} + \UB_j + \PM_{ji} - \LB_i + (\LB_j + \PM_{ji} - \LB_i)\delta_{ji} - \BM_{ji}\delta_{ji}
	\end{cases}
	\\
	\begin{cases}
	c_j  \EQ  \LB_j + (\UB_j - \PM_{ij} - \UB_i + \UB_i + \PM_{ij} - \LB_j)\delta_{ij}
	\\
	c_j  \EQ  \UB_j + (\LB_j + \PM_{ji} - \LB_i - \UB_j - \PM_{ji} + \LB_i)\delta_{ji}
	\end{cases}
	\\
	\begin{cases}
	c_j  \EQ  \LB_j + (\UB_j - \LB_j)\delta_{ij}
	\\
	c_j  \EQ  \UB_j + (\LB_j - \UB_j)\delta_{ji}
	\end{cases}
	\\
	\begin{cases}
	\delta_{ij}  \EQ  \frac{c_j - \LB_j}{\UB_j - \LB_j}
	\\
	\delta_{ji}  \EQ  \frac{\UB_j - c_j}{\UB_j - \LB_j}
	\end{cases}
	\end{gather*}
	\begin{align*}
	\delta_{ij} + \delta_{ji}  &\EQ  \frac{c_j - \LB_j}{\UB_j - \LB_j} + \frac{\UB_j - c_j}{\UB_j - \LB_j} \\
		&\EQ  \frac{c_j - \LB_j + \UB_j - c_j}{\UB_j - \LB_j} \\
		&\EQ  \frac{\UB_j - \LB_j}{\UB_j - \LB_j} \\
		&\EQ  1
	\end{align*}
which violates the non-tightness assumption on \eqref{P:g}, so at most one of \eqref{P:a} and \eqref{P:b} can be tight. Similar algebra will demonstrate that at most one of \eqref{P:c} and \eqref{P:d} can be tight; so, by considering earlier arguments, we see that at most one of $(\ref{P:a}-\ref{P:d})$ can be tight. This leaves at most three tight constraints and this case has no extreme point.

\item[\underline{II.ii.a}] In this case, w.l.o.g, assume that constraint \eqref{P:e} is tight while \eqref{P:f}, \eqref{P:g}, and \eqref{P:i} are not. This means that $\hat\delta_{ji}$ is fractional and thus, by earlier argument, at most two of $(\ref{P:a}-\ref{P:d})$ can be tight. This leaves at most three tight constraints and this case has no extreme point.

\item[\underline{II.ii.b}] In this case, w.l.o.g, assume that constraints \eqref{P:e} and \eqref{P:g} are tight while \eqref{P:f} and \eqref{P:i} are not. First, suppose that \eqref{P:a} and \eqref{P:c} are tight alongside \eqref{P:g}:
	\begin{gather*}
	\begin{cases}
	c_i  \EQ  \LB_i(1-\hat\delta_{ji}) + (\LB_j + \PM_{ji})\hat\delta_{ji}
	\\
	c_j  \EQ  \LB_j(1-\hat\delta_{ij}) + (\LB_i + \PM_{ij})\hat\delta_{ij}
	\\
	\hat\delta_{ij} + \hat\delta_{ji}  \EQ  1
	\end{cases}
	\\
	\begin{cases}
	c_i  \EQ  \LB_i(1-\hat\delta_{ji}) + (\LB_j + \PM_{ji})\hat\delta_{ji}
	\\
	c_j  \EQ  \LB_j\hat\delta_{ji} + (\LB_i + \PM_{ij})(1-\hat\delta_{ji})
	\end{cases}
	\\
	\begin{cases}
	c_i  \EQ  \LB_i + (\LB_j + \PM_{ji} - \LB_i)\hat\delta_{ji}
	\\
	c_j  \EQ  \LB_i + \PM_{ij} + (\LB_i + \PM_{ij} + \LB_j)\hat\delta_{ji}
	\end{cases}
	\\
	c_i - c_j  \EQ  \LB_i + (\LB_j + \PM_{ji} - \LB_i)\hat\delta_{ji} - \LB_i - \PM_{ij} - (\LB_i + \PM_{ij} + \LB_j)\hat\delta_{ji}
	\\
	c_i - c_j  \EQ  (\LB_j + \PM_{ji} - \LB_i - \LB_i - \PM_{ij} - \LB_j)\hat\delta_{ji} - \PM_{ij} 
		\\
	c_i + \PM_{ij} - c_j  \EQ  (\PM_{ji} - \LB_i - \LB_i - \PM_{ij})\hat\delta_{ji}.
	\end{gather*}
Subtracting this from the tight form of \eqref{P:e} gives a contradiction.
	\begin{align*}
	\BM_{ij}&(1-\delta_{ij})  \EQ  (\PM_{ji} - \LB_i - \LB_i - \PM_{ij})\hat\delta_{ji}
	\\
	\UB_i &+ \PM_{ij} - \LB_{j}  \EQ  \PM_{ji} - 2\LB_i - \LB_i - \PM_{ij}
	\\
	0  &\EQ  \PM_{ji} - (\UB_i - \LB_{j}) - 2\LB_i - 2\PM_{ij}
	\\
		&\LS  0 - 2\LB_i - 2\PM_{ij}  \qquad\text{since }\PM_{ji} < (\UB_i - \LB_{j})\\
		&\LS  0
	\end{align*}
A similar contradiction can be achieved by supposing that \eqref{P:b} and \eqref{P:d} are tight instead. Thus, by combining with earlier arguments, at most one of $(\ref{P:a}-\ref{P:d})$ can be tight. This leaves at most three tight constraints and this case has no extreme point.
\end{itemize}

\noindent Having completed the case analysis of the projection \eqref{P}, it remains only to rigorously establish that the full dimensional formulation \ref{model:SU} is ideal. To do this, consider a fractional solution $(\mathbf{\hat c},\boldsymbol{\hat\delta})$ to the continuous relaxation of \ref{model:SU} and assume, for the sake of contradiction, that it is an extreme point. That is, assume that $(\mathbf{\hat{c}},\boldsymbol{\hat\delta})$ is tight to at least seven linearly independent inequalities in addition to satisfying equation \eqref{SU:disj}. There must exist a dimension $\hat s \in \{x,y\}$ in which at least four of these tight inequalities exist; call the other dimension $\hat t$. Each of these dimensions projects into its own variant of \eqref{P} and there are then four cases to consider:
\begin{enumerate}
\item A fractional $\hat\delta_{\hat s}$ necessitates a fractional extreme point in the projected system \eqref{P} which is a contradiction as we have already demonstrated that \eqref{P} has no fractional extreme points.

\item If \eqref{P:g} is not tight in direction $\hat t$, then a fractional $\hat\delta_{\hat t}$ implies a fractional $\hat\delta_{\hat s}$ by equality constraint \eqref{SU:disj}. We have already shown that this is a contradiction.

\item On the other hand, if \eqref{P:g} is tight in the direction $\hat t$, then a fractional $\hat\delta_{\hat t}$ implies that neither \eqref{P:h} nor \eqref{P:i} is tight in the direction $\hat t$ while both are tight in the direction of $\hat s$. However, \eqref{SU:disj} is linearly dependent with a tight \eqref{P:g} in the direction $\hat t$ and tight \eqref{P:h} and \eqref{P:i} in the direction of $\hat s$. 

Since \eqref{P} has only four variables it cannot support more than four linearly independent tight constraints, so such a fractional solution is only an extreme point if it is tight to three constraints in the direction $\hat t$ in addition to \eqref{P:g}. This is not achievable: by the proof of case \underline{II.i.a} above, we know that only one of \eqref{P:e} and \eqref{P:f} may be tight; further, by the proof of case \underline{II.ii.ab}, we also know that at most one of \eqref{P:a}-\eqref{P:d} may be tight.

\item Finally, if $\hat t$ has fewer than three of the tight inequalities; then $\hat s$ must have at least five. But the projected system \eqref{P} has only four variables and cannot support five or more linearly independent, tight constraints.
\end{enumerate}
\end{proof}

\section{Strong Forms of Lemmas \ref*{lem:SUCoversNecessary} and \ref*{lem:RUCoversNecessary}}\label{app:StongLemmas}
Here we present alternate forms of the lemmas which describe linearly dependent collections of constraints for some of the formulations. In particular, these variants (and their proofs) identify the conditions under which these collections are minimal. This additional information is not necessary to the body of the paper but is nonetheless interesting.

\vspace{1em}\noindent\textbf{Strong Form of Lemma \ref{lem:SUCoversNecessary}.}\emph{
    The collection of constraints $\inst{rSU:lb}{}$, $\inst{rSU:ub}{}$, $\inst{rSU:prec}{}$, and $\inst{rSU:indic}{}$ is linearly dependent for any given index $(k,l,s)\in\combs_{ij}$. Further, if $\PM_{kls} \neq \UB_{ls} - \LB_{ks}$ then the set is minimally dependent. 
}
\begin{proof}
    Assume that $\inst{rSU:lb}{}$, $\inst{rSU:ub}{}$, and $\inst{rSU:prec}{}$ are each tight and consider the combination $\inst{rSU:prec}{} - \inst{rSU:ub}{} + \inst{rSU:lb}{}$:
		\begin{align*}
		\big[c_{ks} - c_{ls}  &\EQ  \UB_{ks} - \LB_{ls} + (\LB_{ls} - \PM_{kls} - \UB_{ks})\delta_{kls}\big] \\
		- \big[c_{ks}  &\EQ  \UB_{ks} + (\UB_{ls} - \PM_{kls} - \UB_{ks})\delta_{kls}\big] 
		+ \big[c_{ls}  \EQ  \LB_{ls} + (\LB_{ks} + \PM_{kls} - \LB_{ls})\delta_{kls}\big] \\
		\Rightarrow\quad&\big[0  \EQ  (\LB_{ls} - \PM_{kls} - \UB_{ks} - \UB_{ls} + \PM_{kls} + \UB_{ks} + \LB_{ks} + \PM_{kls} - \LB_{ls})\delta_{kls}\big] \\
		\Rightarrow\quad&\big[0  \EQ  (\PM_{kls} - \UB_{ls} + \LB_{ks})\delta_{kls}\big]
		\end{align*}
    which is satisfied only if we have $\PM_{kls} = \UB_{ls} - \LB_{ks}$ or $\inst{rSU:indic}{}$ is also assumed to be tight.
\end{proof}

\vspace{1em}\noindent\textbf{Strong Form of \Cref{lem:RUCoversNecessary}.}\emph{
    The following collections of constraints are linearly dependent for all indices $(k,l,s) \in J$:}
    \begin{enumerate}[label=(\roman*),font=\itshape]
    \item $\inst{RUR:disj}{}$, $\inst{RUR:tight}{s}$, $\inst{RUR:indic}{kls'}$, and $\inst{RUR:indic}{lks'}$ where $s'\in\{x,y\}\setminus s$;
    
    \item $\inst{RUR:lb}{kls}$, $\inst{RUR:lb}{lks}$, $\inst{RUR:prec}{kls}$, $\inst{RUR:tight}s$, and $\inst{RUR:indic}{lks}$;
    
    \item $\inst{RUR:lb}{kls}$, $\inst{RUR:ub}{kls}$, $\inst{RUR:prec}{kls}$, $\inst{RUR:tight}{s}$, and $\inst{RUR:indic}{kls}$; and
            
    \item $\inst{RUR:ub}{kls}$, $\inst{RUR:ub}{lks}$, $,\inst{RUR:prec}{kls}$, $\inst{RUR:tight}{s}$, and $\inst{RUR:indic}{lks}$.
    \end{enumerate}
    \emph{Further, (i) is minimal, and (ii)-(iv) are minimal if $\PM_{lks} \neq \UB_{ks} - \LB_{ls}$ for all $(k,l,s)\in\combs_{ij}$.
}
\begin{proof} We demonstrate each collection by assuming tightness and considering a linear combination of the constraints:
    \begin{enumerate}[label=\emph{(\roman*)}]
    \item Assuming tightness, the combination $\inst{RUR:disj}{} - \inst{RUR:tight}{s} -  \inst{RUR:indic}{kls'} - \inst{RUR:indic}{lks'}$ reduces to $0 = 0$. However, removing any one of the constraints will leave us with only one instance of at least one variable, so this collection is minimal.
    
    \item Again, assume tightness and consider the combination $\inst{RUR:prec}{kls}) - \inst{RUR:lb}{lks} + \inst{RUR:lb}{kls}$:
        \begin{align*}
        [c_{kx} &- c_{lx}  \EQ  \PM_{lks} - (\PM_{kls} + \PM_{lks})\delta_{kls} + (\UB_{ks} - \PM_{lks} - \LB_{ls})\delta_{lks}] \\
            &\quad- [c_{ks}  \EQ  \LB_{ks} + (\LB_{ls} + \PM_{lks} - \LB_{ks})\delta_{lks}]\\
            &\qquad+ [c_{ls}  \EQ  \LB_{ls} + (\LB_{ks} + \PM_{kls} - \LB_{ls})\delta_{kls}] \\
            &\Rightarrow [0  \EQ  \LB_{ls} + \PM_{lks} - \LB_{ks} - (\LB_{ls} + \PM_{lks} - \LB_{ks})\delta_{kls} \\
            &\qquad + (\UB_{ks} - 2\PM_{lks} - 2\LB_{ls} + \LB_{ks})\delta_{lks}].
        \end{align*}
    But if $\inst{RUR:tight}{s}$ is also tight, then we have $\delta_{kls} = 1-\delta_{lks}$, so
        \begin{align*}
        [0  &\EQ  \LB_{ls} + \PM_{lks} - \LB_{ks} - (\LB_{ls} + \PM_{lks} - \LB_{ks})(1-\delta_{lks}) \\
            &\qquad + (\UB_{ks} - 2\PM_{lks} - 2\LB_{ls} + \LB_{ks})\delta_{lks}]\\
            &\quad\Rightarrow [0  \EQ  (\UB_{ks} - \PM_{lks} - \LB_{ls})\delta_{lks}]]
        \end{align*}
    which is satisfied only if we have $\PM_{lks} = \UB_{ks} - \LB_{ls}$ or $\inst{RUR:indic}{lks}$ is also assumed to be tight.
    
    \item Similarly, assume tightness and consider the combination $\inst{RUR:prec}{kls}) - \inst{RUR:ub}{kls} + \inst{RUR:lb}{kls}$:
        \begin{align*}
        [c_{kx} &- c_{lx}  \EQ  \PM_{lks} - (\PM_{kls} + \PM_{lks})\delta_{kls} + (\UB_{ks} - \PM_{lks} - \LB_{ls})\delta_{lks}] \\
            &\quad- [c_{ks}  \EQ  \UB_{ks} + (\UB_{ls} - \PM_{kls} - \UB_{ks})\delta_{kls}]\\
            &\qquad+ [c_{ls}  \EQ  \LB_{ls} + (\LB_{ks} + \PM_{kls} - \LB_{ls})\delta_{kls}] \\
            &\Rightarrow [0  \EQ  \LB_{ls} + \PM_{lks} - \UB_{ks} + (\LB_{ks} + \PM_{kls} - \UB_{ls} + \UB_{ks} - \PM_{lks} - \LB_{ls})\delta_{kls}\\
            &\qquad+ (\UB_{ks} - \PM_{lks} - \LB_{ls})\delta_{lks}].
        \end{align*}
    But if $\inst{RUR:tight}{s}$ is also tight, then we have $\delta_{lks} = 1-\delta_{kls}$, so
        \begin{align*}
        [0  &\EQ  \LB_{ls} + \PM_{lks} - \UB_{ks} + (\LB_{ks} + \PM_{kls} - \UB_{ls} + \UB_{ks} - \PM_{lks} - \LB_{ls})\delta_{kls}\\
            &\qquad+ (\UB_{ks} - \PM_{lks} - \LB_{ls})(1-\delta_{kls})] \\
            &\quad\Rightarrow[0  \EQ  (\LB_{ks} + \PM_{kls} - \UB_{ls})\delta_{kls}]
        \end{align*}
    which is satisfied only if we have $\PM_{kls} = \UB_{ls} - \LB_{ks}$ or $\inst{RUR:indic}{kls}$ is also assumed to be tight.
    
    \item Finally, assume tightness and consider the combination $\inst{RUR:prec}{kls} - \inst{RUR:ub}{kls} + \inst{RUR:ub}{lks}$:
        \begin{align*}
        [c_{kx} &- c_{lx}  \EQ  \PM_{lks} - (\PM_{kls} + \PM_{lks})\delta_{kls} + (\UB_{ks} - \PM_{lks} - \LB_{ls})\delta_{lks}] \\
            &\quad- [c_{ks}  \EQ  \UB_{ks} + (\UB_{ls} - \PM_{kls} - \UB_{ks})\delta_{kls}]\\
            &\qquad+ [c_{ls}  \EQ  \UB_{ls} + (\UB_{ks} - \PM_{lks} - \UB_{ls})\delta_{lks}] \\
            &\Rightarrow [0  \EQ  \UB_{ls} + \PM_{lks} - \UB_{ks} + (\UB_{ks} - \PM_{lks} - \UB_{ls})\delta_{kls}\\
            &\qquad+ (2\UB_{ks} - 2\PM_{lks} - \LB_{ls} - \UB_{ls})\delta_{lks}].
        \end{align*}
    But if $\inst{RUR:tight}{s}$ is also tight, then we have $\delta_{kls} = 1-\delta_{lks}$, so
        \begin{align*}
        [0  &\EQ  \UB_{ls} + \PM_{lks} - \UB_{ks} + (\UB_{ks} - \PM_{lks} - \UB_{ls})(1-\delta_{lks})\\
            &\qquad+ (2\UB_{ks} - 2\PM_{lks} - \LB_{ls} - \UB_{ls})\delta_{lks}] \\
            &\quad\Rightarrow [0  \EQ  (\UB_{ks} - \PM_{lks} - \LB_{ls})\delta_{lks}]
        \end{align*}
    which is satisfied only if we have $\PM_{lks} = \UB_{ks} - \LB_{ls}$ or $\inst{RUR:indic}{lks}$ is also assumed to be tight.
    \end{enumerate}
\end{proof}

\end{appendices}
\end{document}